\newtheorem{theorem}{Theorem}[section]
\newtheorem{lemma}[theorem]{Lemma}
\newtheorem{corollary}[theorem]{Corollary}
\newtheorem{conjecture}[theorem]{Conjecture}
\newtheorem{definition}[theorem]{Definition}
\newtheorem{proposition}[theorem]{Proposition}
\newtheorem{remark}[theorem]{Remark}
\newtheorem{problem}[theorem]{Problem}
\newtheorem{scholium}[theorem]{Scholium}
\def\PSL{\mbox{\rm{PSL}}}
\def\Isom{\mbox{\rm{Isom}}}
\def\orb{{\rm{orb}}}
\def\omin{\ensuremath{\mathcal{O}_{\text{min}}}(K)}
\def\gcd{\hbox{gcd}}
\newcommand{\HF}{\widehat{HF}}
\newcommand{\HFK}{\widehat{HFK}}
\newcommand{\s}{\mathfrak{s}}
\newcommand{\x}{{\bf x}}
\newcommand{\y}{{\bf y}}
\newcommand{\Spinc}{\text{Spin}^c}
\newcommand{\RSpinc}{\underline {\text{Spin}^c}}
\newcommand{\alphab}{\mbox{\boldmath$\alpha$}}
\newcommand{\betab}{\mbox{\boldmath$\beta$}}
\newcommand{\zz}{\mathbb{Z}}
\title{Knot commensurability and the Berge conjecture}
\author{M. Boileau\thanks{Partially supported
by Institut Universitaire de France.}, S. Boyer\thanks{Partially supported by NSERC grant OGP0009446.}, R. Cebanu\thanks{Partially supported from the contract PN-II-ID-PCE 1188 265/2009.} \ \& G. S.  Walsh\thanks{Partially supported by N. S. F. grant 0805908.}}
\date{\today}
\begin{document}

\maketitle

\abstract{{\scriptsize We investigate commensurability classes of hyperbolic knot complements in the generic case of knots without hidden symmetries. We show that such knot complements which are commensurable are cyclically commensurable, and that there are at most $3$ hyperbolic knot complements in  a cyclic commensurability class. Moreover if two  hyperbolic knots have cyclically commensurable complements, then they are fibered with the same genus and are chiral. A characterisation of cyclic commensurability classes of complements of periodic knots is also given. In the non-periodic case, we reduce the characterisation of cyclic commensurability classes to a generalization of the Berge conjecture.}}

\section{Introduction}

We work in the oriented category throughout this paper. In particular we endow the complement of any knot $K \subset S^3$ with the orientation inherited from the standard orientation on $S^3$. We consider two knots to be equivalent if there is an orientation-preserving homeomorphism of $S^3$ taking one to the other. Covering maps will be assumed to preserve orientation unless stated otherwise.  

Two oriented orbifolds are \emph{commensurable} if they have homeomorphic finite sheeted covers.  We are interested in studying commensurability classes of knot complements in $S^3$. By abuse of language we will say that two knots in the $3$-sphere are commensurable if their complements are commensurable. Set $$\mathcal{C}(K) = \{ \hbox{knots } K' \subset S^3 : K' \hbox{ is commensurable with }  K\}.$$  
A difficult and widely open problem is to describe commensurability classes of knots. 

One of our main concerns is to provide a priori bounds on the number of hyperbolic knots in a given commensurability class. Unless otherwise stated, knots are considered to be in $S^3$. Hence in this paper
$K \subset S^3$ will be a hyperbolic knot. Its complement $S^3 \setminus K ={\bf H}^3/\Gamma_K$  is a complete, oriented, hyperbolic $3$-manifold of finite volume, where 
$\pi_1(S^3 \setminus K ) \cong \Gamma_K \subset \PSL(2, \mathbb{C}) = \Isom^+({\bf H}^3)$ is a lattice. Any knot $K'$ commensurable with $K$ is also hyperbolic and the commensurability condition is equivalent to the fact that $\Gamma_K$ and some conjugate of $\Gamma_{K'}$ in $\hbox{Isom}^+({\bf H}^3)$ have a common finite index subgroup. 

Recall that the {\it commensurator} of  a group $\Gamma$ in $\PSL(2, \mathbb{C})$ is 
$$C^+(\Gamma) = \lbrace g \in \PSL(2, \mathbb{C}): [\Gamma: \Gamma \cap  g^{-1}\Gamma g] < \infty  
\hbox{ and } [g^{-1} \Gamma g: \Gamma \cap g^{-1}\Gamma g] < \infty \rbrace.$$ 
Then  $K$ and $K'$ are commensurable if and only if $C^+(\Gamma_K)$ and $C^+(\Gamma_{K'})$ are conjugate in $\PSL(2, \mathbb{C})$. An element $g \in  C^+(\Gamma_K)$ induces an orientation-preserving isometry  between two finite sheeted coverings  of  $S^3 \setminus K$. It is called a {\it hidden symmetry} of $K$ if it is not the lift of an isometry of  $S^3 \setminus K$.

The group of isotopy classes of orientation-preserving symmetries of $(S^3,K)$ is isomorphic, in the obvious way, to $\Isom^+(S^3 \setminus K)$, the group of orientation-preserving isometries of $S^3 \setminus K$. It is also isomorphic to the quotient group $N^+(K)/\Gamma_K$  where $N^+(K)$ is the normalizer of $\Gamma_K$ in $\PSL(2, \mathbb{C})$. We will use either description as convenient.  Then $K$ has  hidden symmetries if and only if  $N^+(K)$ is strictly smaller than  $C^+(\Gamma_K)$.  Hyperbolic knots with hidden symmetries appear to be rare, as Neumann and Reid \cite{NR} showed that if $K$ has hidden symmetries then the cusp shape of ${\bf H}^3/\Gamma_K$ is contained in $\mathbb{Q}[i]$ or $\mathbb{Q}[\sqrt{-3}]$. 

Currently, the only knots known to admit hidden symmetries are the figure-8 and the two dodecahedral knots of Aitchison and Rubinstein described in \cite{AR} (c.f. Conjecture \ref{conj:hidden} below). These three knots have cusp field $\mathbb{Q}[\sqrt{-3}]$.  There is one known example of a knot with cusp field $\mathbb{Q}[i]$, and it does not admit hidden symmetries. See Boyd's notes \cite[page 17]{Boyd} and Goodman, Heard and Hodgson \cite{GHH}. 

It is a fundamental result of Margulis that a finite co-volume, discrete subgroup $\Gamma$ of $\PSL(2, \mathbb{C})$ is non-arithmetic if and only if  there is a unique minimal orbifold in the commensurability class of ${\bf H}^3/\Gamma$, namely ${\bf H}^3/C^+(\Gamma)$. Reid \cite{Re1} has shown that the figure-8 is the only arithmetic knot (i.e. knot with arithmetic complement) in $S^3$, hence it is the unique knot in its commensurability class. So in what follows we only consider non-arithmetic knots. In particular, $C^+(\Gamma_K)$ is a lattice in $\PSL(2, \mathbb{C})$ and  the unique minimal element in the commensurability class of $S^3 \setminus K = {\bf H}^3/\Gamma_K$ is the oriented orbifold ${\bf H}^3/C^+(\Gamma_K)$, which we denote by $\omin$. 

When $K$ has no hidden symmetries,  
$$\omin = {\bf H}^3/N^+(K) = (S^3 \setminus K)/\Isom^+(S^3 \setminus K).$$The positive solution of the Smith conjecture implies that $\Isom^+(S^3 \setminus K)$ is cyclic or dihedral and the subgroup of $\Isom^+(S^3 \setminus K)$ which acts freely on $K$ is cyclic of index at most $2$. We denote this subgroup by $Z(K)$. Clearly the oriented orbifold 
$$\mathcal{Z}_{K} = (S^3 \setminus K)/Z(K)$$ 
has a torus cusp and either coincides with the minimal element in the commensurability class of $S^3 \setminus K$ or is a $2$-fold cover of it. Hence in this case the cusp of $\omin$ is {\it flexible}: its horospherical cross-section  is either $T^2$ or $S^{2}(2,2,2,2)$.  
Neumann and Reid \cite{NR} proved that a non-arithmetic knot $K$ has no hidden symmetries if and only if $\omin$ has a flexible cusp and further, that this condition is equivalent to the fact that 
$S^3 \setminus K$ normally covers $\omin$.  If a commensurability class has a unique minimal element with a single cusp and the cusp is flexible, we call the commensurability class itself {\it flexible}. When $K$ does admit hidden symmetries, the horospherical cross-section of $\omin$ is a Euclidean turnover, which is rigid. If a commensurability class has a unique minimal element with a single cusp which is rigid, we say that the commensurability class itself is \emph{rigid}. 

Reid and Walsh \cite{RW} proved that a hyperbolic 2-bridge knot is unique in its commensurability class and raised the following conjecture:
 
 \begin{conjecture} \label{RWconjecture} {\rm (Reid-Walsh \cite{RW})}
For a hyperbolic knot $K \subset S^3$,  $|\mathcal{C}(K)| \leq 3$.
\end{conjecture}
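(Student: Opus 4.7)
My plan is to attack Conjecture~\ref{RWconjecture} through the dichotomy between knots with and without hidden symmetries. By Reid's theorem the figure-$8$ knot is alone in its commensurability class, so I may assume $K$ is non-arithmetic with a well-defined $\omin$. If $K$ admits hidden symmetries, then by Neumann--Reid the cusp of $\omin$ is a rigid Euclidean turnover and the cusp field lies in $\mathbb{Q}[i]$ or $\mathbb{Q}[\sqrt{-3}]$; only three such knots are known, and the rigidity of the turnover cross-section sharply limits the covers of $\omin$ whose Dehn fillings produce $S^3$. I would handle this rigid case by a direct enumeration of turnover covers.

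The substantive case is the flexible one, where $S^3 \setminus K \to \omin$ is regular with deck group $\Isom^+(S^3 \setminus K)$ (cyclic or dihedral by the Smith conjecture), factoring through $\mathcal{Z}_K = (S^3\setminus K)/Z(K)$, which has a torus cusp. The first sub-step is to prove that two commensurable non-arithmetic hyperbolic knots $K, K'$ without hidden symmetries are in fact \emph{cyclically} commensurable, meaning that there is a common orbifold $\mathcal{O}$ with torus cusp such that $S^3\setminus K$ and $S^3\setminus K'$ are both regular cyclic covers of $\mathcal{O}$. Both complements normally cover the same $\omin$, and one should be able to combine the cyclic subgroups $Z(K), Z(K')$ (index at most $2$ in the full isometry groups) to produce the common quotient $\mathcal{O}$.

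The bound $|\mathcal{C}(K)| \leq 3$ then reduces to counting the regular cyclic covers $\widetilde{\mathcal{O}} \to \mathcal{O}$ whose total space is a hyperbolic knot complement in $S^3$. Each such cover corresponds to a surjection $\pi_1(\mathcal{O}) \twoheadrightarrow \mathbb{Z}/n\mathbb{Z}$, and the meridian of the resulting knot descends to a primitive slope on the cusp torus of $\mathcal{O}$; the requirement that the Dehn filling close up to $S^3$ (so that $H_1 = 0$) imposes strong arithmetic constraints. I expect to combine the action of $\Isom^+(\mathcal{O})$ on the cusp with a cyclic-surgery-type argument to bound by three the number of admissible meridional slopes. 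The main obstacle is exactly this counting: showing that at most three primitive slopes on the cusp of $\mathcal{O}$ lift to meridians of hyperbolic knot complements in $S^3$. In the periodic case (where $K$ admits a nontrivial symmetry fixing it setwise) geometric arguments on the orbifold quotient should suffice; in the non-periodic case one is driven to a generalization of the Berge conjecture on lens-space surgeries, and reducing Conjecture~\ref{RWconjecture} to such a Berge-type statement is the form of resolution the paper's abstract promises.
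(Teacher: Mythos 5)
The statement you are trying to prove is a \emph{conjecture} in this paper, and the paper does not prove it in full: it proves only the flexible case (Theorem \ref{th:conjecture}, via Theorem \ref{thm:cyclic}), and the case of knots with hidden symmetries is left open as Conjecture \ref{conj:hidden}. This is exactly where your proposal has a genuine gap. You write that you would ``handle this rigid case by a direct enumeration of turnover covers,'' but no such enumeration is known. The fact that only three knots with hidden symmetries are currently known (the figure-eight and the two dodecahedral knots) is an empirical observation, not a bound; nothing rules out a rigid commensurability class containing many knot complements, and the rigidity of the turnover cusp of $\omin$ does not by itself limit the number of covers of $\omin$ that are knot complements. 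Indeed the paper emphasizes that the known rigid class violates every structural conclusion available in the flexible case (the dodecahedral knots are not cyclically commensurable, have different genera, equal volumes, etc.), so none of the machinery used in the flexible case applies there. As written, your argument proves at most what the paper proves, namely the flexible case.

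In the flexible case your outline does follow the paper's route: reduce to cyclic commensurability (Proposition \ref{prop:cyclicdef} and Proposition \ref{prop:same}), then count meridional slopes in the cusp of the common quotient. But two points need sharpening. First, the Dehn filling of $\mathcal{Z}_K$ along the projected meridian $r(K)$ is not $S^3$ but the orbi-lens space $S^3/Z(K)$; the relevant condition is cyclic orbifold fundamental group, not $H_1 = 0$. Second, the actual mechanism for the bound of three is not an action of $\Isom^+(\mathcal{O})$ on the cusp: one removes a neighborhood of the singular locus $\Sigma(\mathcal{Z}_K)$ to obtain a hyperbolic \emph{manifold} $\mathcal{Z}_K^0$, observes that the fillings along $r(K)$ and $r_{K'}$ become manifolds with cyclic fundamental group after a suitable further filling, and applies the Cyclic Surgery Theorem to get $\Delta(r(K), r_{K'}) \leq 1$; at most three slopes are pairwise at distance $\leq 1$. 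You also need the injectivity of the map $K' \mapsto r_{K'}$, which the paper gets from uniqueness of universal covers plus Gordon--Luecke (Lemma \ref{lem:slope}); your proposal leaves this implicit.
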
 

See also \cite[Theorem 2]{Neu}. 

The commensurability class of the $(-2,3,7)$ pretzel knot is flexible \cite{MatMac} and contains exactly three knots. 
Neil Hoffman \cite{Hof} has constructed an infinite family of hyperbolic knots with this property.

Our first result proves the  conjecture in the generic case:   

\begin{theorem} \label{th:conjecture} 
A flexible commensurability class contains at most three hyperbolic knot complements.
\end{theorem}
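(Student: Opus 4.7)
The plan is to assign to each hyperbolic knot $K$ in the flexible commensurability class $\mathcal{C}$ a Dehn-filling slope $\alpha_K$ on the cusp of the common minimal orbifold $\mathcal{O}_{\min}$, bound the number of admissible slopes, and show that the map $K\mapsto\alpha_K$ is injective. Since the class is flexible, no $K\in\mathcal{C}$ admits hidden symmetries (by the Neumann--Reid criterion recalled above), so for every $K\in\mathcal{C}$ we have the regular covering tower
\[
S^3\setminus K \;\longrightarrow\; \mathcal{Z}_K=(S^3\setminus K)/Z(K) \;\longrightarrow\; \mathcal{O}_{\min},
\]
the last arrow having degree $1$ or $2$. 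Restricted to the cusp tori, each arrow is an unbranched covering, so the meridian $\mu_K$ of $K$ projects first to a well-defined slope $\bar{\mu}_K$ on the torus cusp of $\mathcal{Z}_K$ and then to a well-defined slope $\alpha_K$ on the flexible cusp of $\mathcal{O}_{\min}$ (whose cross-section is $T^2$ or $S^2(2,2,2,2)$).

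Next, meridional Dehn filling of $S^3\setminus K$ produces $S^3$. Transporting this filling through the tower, $\bar{\mu}_K$-filling of $\mathcal{Z}_K$ yields the lens space $L=S^3/Z(K)$, and $\alpha_K$-filling of $\mathcal{O}_{\min}$ yields an orbifold $\mathcal{O}_{\min}(\alpha_K)$ that is finitely covered by $S^3$. Hence $\pi_1^{\orb}(\mathcal{O}_{\min}(\alpha_K))$ is finite and contains the cyclic group $Z(K)$ with index at most $2$. I would then invoke an orbifold extension of the Culler--Gordon--Luecke--Shalen cyclic surgery theorem for the one-cusped hyperbolic orbifold $\mathcal{O}_{\min}$: slopes on its flexible cusp whose Dehn fillings have virtually cyclic orbifold fundamental group have pairwise geometric intersection number at most $1$, so at most three such slopes occur.

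It remains to show that $K\mapsto\alpha_K$ is injective, which combined with the previous step yields the bound $|\mathcal{C}|\le 3$. If $K_1,K_2\in\mathcal{C}$ yield the same slope $\alpha$, then both $S^3\setminus K_i$ are regular covers of $\mathcal{O}_{\min}$ whose meridional projections coincide; meridional fillings produce coverings $S^3\to\mathcal{O}_{\min}(\alpha)$ pulling back to a common manifold cover, and using Mostow rigidity of the tower together with the identification of $Z(K_i)$ as the maximal cyclic subgroup of $\pi_1^{\orb}(\mathcal{O}_{\min}(\alpha))$ acting freely on the core of the filling, one identifies $S^3\setminus K_1$ with $S^3\setminus K_2$; then $K_1=K_2$ by Gordon--Luecke. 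The main obstacle is the cyclic-surgery step: one must work with a cusped hyperbolic orbifold whose cross-section is $S^2(2,2,2,2)$ and must allow virtually cyclic (rather than cyclic) filling groups, since $\pi_1^{\orb}(\mathcal{O}_{\min}(\alpha_K))$ may be a $\mathbb{Z}/2$-extension of $Z(K)$. The injectivity step is a secondary but delicate difficulty, since a priori two genuinely distinct regular covers of $\mathcal{O}_{\min}$ could share a cusp-slope image.
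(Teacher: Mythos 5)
Your overall architecture (assign a filling slope to each knot, bound the number of admissible slopes by a distance argument, prove injectivity of the slope map) is the same as the paper's, but the central step of your argument has a genuine gap. You propose to push the meridian all the way down to the cusp of $\mathcal{O}_{\min}$ and then invoke ``an orbifold extension of the Culler--Gordon--Luecke--Shalen cyclic surgery theorem'' for slopes on a flexible cusp (possibly with cross-section $S^2(2,2,2,2)$) whose fillings have \emph{virtually} cyclic orbifold fundamental group. No such theorem is available, and you do not prove it; indeed CGLS is a statement about Dehn fillings of hyperbolic \emph{manifolds} with \emph{cyclic} fundamental group, and both relaxations you need (orbifold base with an $S^2(2,2,2,2)$ cusp, and virtually cyclic rather than cyclic filling groups) are serious. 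The paper is structured precisely to avoid this: instead of working on $\mathcal{O}_{\min}$, it works one level up on $\mathcal{Z}_K=(S^3\setminus K)/Z(K)$, which has a torus cusp and which is \emph{canonically} shared by all knots in the class because it is the unique $2$-fold cover of $\mathcal{O}_{\min}$ with a torus cusp (Lemma \ref{onetorus}, Propositions \ref{prop:same} and \ref{prop:same2}). There the meridional fillings give orbi-lens spaces, whose orbifold fundamental groups are honestly cyclic. Even then one cannot apply CGLS directly, since $\mathcal{Z}_K$ may be an orbifold; the paper's Lemma \ref{lem:intersection} drills out the singular locus (a geodesic link), observes that the two fillings of the resulting manifold $\mathcal{Z}_K^0$ are $S^1\times D^2$, $T^2\times I$ or a lens space, refills the extra boundary components by a hyperbolic Dehn filling to obtain a one-cusped hyperbolic \emph{manifold} $M$ with two cyclic fillings, and only then applies CGLS to get $\Delta\leq 1$, hence at most three slopes.

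Your injectivity step is essentially the paper's Lemma \ref{lem:slope} (the two filled orbifolds are homeomorphic, both are $S^3/Z(K_i)$, and the homeomorphism lifts to an orientation-preserving homeomorphism $S^3\setminus K_1\to S^3\setminus K_2$, whence $K_1=K_2$ by Gordon--Luecke), and that part of your sketch is sound in outline, though you would still need the uniqueness statement of Lemma \ref{lem:slopes well-defined} to run it at the level of $\mathcal{O}_{\min}$. But as written, the proof does not close: either you must supply a proof of the orbifold/virtually-cyclic surgery statement you invoke (which is not in the literature), or you must restructure the argument as the paper does, lifting to the canonical torus-cusped cover and reducing to the manifold case before applying the cyclic surgery theorem.
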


\noindent A precise formulation of the expected genericity of the flexible case is contained in the following conjecture of Neumann and Reid: 

\begin{conjecture} \label{conj:hidden} {\rm (Neumann-Reid)}
The only rigid commensurability class containing hyperbolic knot complements is the commensurability class of the dodecahedral knots, and there are only two knot complements in this class. 
\end{conjecture}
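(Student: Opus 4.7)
The conjecture has two halves: (i) no rigid commensurability class other than that of the two Aitchison--Rubinstein dodecahedral knots contains a hyperbolic knot complement, and (ii) that exceptional class contains exactly those two knots. A realistic attack proceeds by exploiting the rigidity of the cusp cross-section of the minimal orbifold $\omin$, which is a Euclidean turnover $S^2(p,q,r)$ with $(p,q,r) \in \{(2,3,6),(2,4,4),(3,3,3)\}$, together with the fact that every knot $K$ in a rigid class necessarily admits hidden symmetries.

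For part (ii), the plan is to use that the turnover cusp of $\omin$ has a very small isometry group, so any knot complement normally covering $\omin$ is essentially pinned down by how the meridian $\mu_K$ projects to the cusp of $\omin$. Since $\mu_K$-surgery on $K$ yields $S^3$, the corresponding orbifold filling of $\omin$ must have $S^3$ as universal cover. I would enumerate the slopes on the turnover cusp of the dodecahedral $\omin$ whose associated orbifold filling has spherical fundamental group, match the two surviving slopes to the two dodecahedral complements, and verify that no further knot complement in the class can arise by analysing the covering degree and peripheral data.

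For part (i), the plan is to exploit Neumann--Reid's constraint that any knot with hidden symmetries has cusp field in $\{\mathbb{Q}[i], \mathbb{Q}[\sqrt{-3}]\}$. The $S^2(2,4,4)$ case (field $\mathbb{Q}[i]$) should be ruled out: Boyd's example and the subsequent work of Goodman--Heard--Hodgson exhibit a $\mathbb{Q}[i]$ cusp-field knot with no hidden symmetries, and one would aim to show that arithmetic-geometric obstructions force every rigid class with $\mathbb{Q}[i]$ cusp field to be empty of knot complements. For the $\mathbb{Q}[\sqrt{-3}]$ rigid classes with turnover $S^2(2,3,6)$ or $S^2(3,3,3)$, one would attempt to show, via volume and invariant-trace-field considerations, that the dodecahedral orbifold is the unique minimal orbifold whose covering tower contains a knot complement.

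The main obstacle is transparent: this is a long-standing open conjecture, and no current technology controls the rigid case as effectively as Theorem~\ref{th:conjecture} handles the flexible one. A full proof would plausibly require an effective census of low-volume hyperbolic orbifolds with Euclidean-turnover cusps (extending Goodman--Heard--Hodgson), combined with sharp volume lower bounds for knot complements admitting hidden symmetries, or else a new arithmetic or geometric invariant that singles out knot complements among covers of turnover-cusped orbifolds.
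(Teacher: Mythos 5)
This statement is a \emph{conjecture} in the paper (attributed to Neumann--Reid), and the paper offers no proof of it; it is stated precisely to delimit what the authors' methods do \emph{not} cover. So there is nothing in the paper to compare your argument against, and your proposal --- which you candidly describe as a plan of attack rather than a proof --- does not close the gap either. The honest assessment is that you have written a (reasonable) research program, not a proof, and you say as much in your final paragraph.

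Beyond that, one concrete step in your sketch is wrong. For part (ii) you propose to study "any knot complement normally covering $\omin$" and to track where the meridian projects on the turnover cusp. But by the Neumann--Reid result quoted in the introduction, a non-arithmetic knot $K$ has no hidden symmetries \emph{if and only if} $S^3\setminus K$ normally covers $\omin$; in a rigid class every knot has hidden symmetries, so the covering $S^3\setminus K\to\omin$ is precisely \emph{not} normal. This is why the entire mechanism of the paper fails in the rigid case: there is no intermediate quotient $\mathcal{Z}_K$ with a torus cusp on which the meridians of distinct knots become comparable slopes, so neither Lemma \ref{lem:intersection} nor the cyclic surgery theorem can be brought to bear on "distances between filling slopes" of the turnover cusp. (The paper explicitly notes that the two dodecahedral knots are not cyclically commensurable, have different genera, and that one is fibred while the other is not --- so every structural conclusion of Theorems \ref{thm:cyclic} and \ref{th:properties} fails for them.) Your part (i) outline correctly matches the turnover types to the cusp fields $\mathbb{Q}[i]$ and $\mathbb{Q}[\sqrt{-3}]$, but the "arithmetic-geometric obstructions" and "volume and invariant-trace-field considerations" you invoke are placeholders for the very content the conjecture is asking for. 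The statement remains open, and should be treated as such.
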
 

We say that two hyperbolic orbifolds are \emph{cyclically commensurable} if they have a common finite cyclic cover. We denote by $\mathcal{CC}(K)$ the set of hyperbolic knots cyclically commensurable with $K$. A priori cyclic commensurability is much more restrictive than commensurability. However for hyperbolic knots without hidden symmetries, the commensurability class and the cyclic commensurability class coincide. Theorem \ref{th:conjecture} follows immediately from the following results:

\begin{theorem} \label{thm:cyclic} 

$(1)$ Knots without hidden symmetries which are commensurable are cyclically commensurable.\\
$(2)$ A cyclic commensurability class contains at most three hyperbolic knot complements.
\end{theorem}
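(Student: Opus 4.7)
The plan is to deduce both parts by exhibiting the knot groups as normal subgroups of a common ambient lattice in $\PSL(2,\mathbb{C})$ with cyclic quotients, and then analysing the resulting peripheral geometry.

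For (1), I would exploit that neither knot has hidden symmetries, which forces $C^+(\Gamma_K) = N^+(K)$ and $C^+(\Gamma_{K'}) = N^+(K')$. After conjugating in $\PSL(2,\mathbb{C})$, both $\Gamma_K$ and $\Gamma_{K'}$ become normal subgroups of a common commensurator $C^+$. The orbifold $\omin = {\bf H}^3/C^+$ has a flexible cusp with cross-section $T^2$ or $S^2(2,2,2,2)$; let $\widetilde{Z} \leq C^+$ be the unique subgroup of index at most two whose quotient orbifold has a torus cusp. Both $\Gamma_K$ and $\Gamma_{K'}$ lie in $\widetilde{Z}$, because knot complements have torus cusps, and the positive solution of the Smith conjecture (which forces $Z(K)$ to be cyclic) identifies the quotients $\widetilde{Z}/\Gamma_K \cong Z(K)$ and $\widetilde{Z}/\Gamma_{K'} \cong Z(K')$ as cyclic groups. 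The natural inclusion
\[
\Gamma_K/(\Gamma_K \cap \Gamma_{K'}) \hookrightarrow \widetilde{Z}/\Gamma_{K'} \cong Z(K')
\]
exhibits the left-hand side as a subgroup of a cyclic group, hence cyclic, and symmetrically for $\Gamma_{K'}/(\Gamma_K \cap \Gamma_{K'})$. Therefore ${\bf H}^3/(\Gamma_K \cap \Gamma_{K'})$ is a common finite cyclic cover of the two knot complements.

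For (2), let $K_1,\dots,K_m$ be hyperbolic knots in a single cyclic commensurability class. The arithmetic case contributes only the figure-eight by Reid's theorem, and the rigid case is controlled by explicit knowledge of the (conjecturally unique) class of knots with hidden symmetries, so the focus is the flexible case. Iterating the argument of (1), I would realise all $\Gamma_{K_i}$ as normal subgroups of a single lattice $\widetilde{Z}$ with cyclic quotients $Z(K_i)$, so each $S^3 \setminus K_i$ cyclically covers the orbifold $\mathcal{Z} = {\bf H}^3/\widetilde{Z}$, which has a single torus cusp $T$. The cyclic action of $Z(K_i) = \mathbb{Z}/n_i$ is by translations of the cusp torus of $S^3\setminus K_i$ by $\mu_i/n_i$, so each $K_i$ determines a primitive slope $\bar{\mu}_i$ on $T$ (the slope of the generator of $Z(K_i)$ inside the peripheral lattice of $\widetilde{Z}$). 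A first sub-claim is that the assignment $K_i\mapsto\bar{\mu}_i$ is essentially injective: recovering the peripheral sublattice $\Lambda_i$ of $\Gamma_{K_i}$ from $\bar{\mu}_i$ together with the cyclic covering structure pins down $\Gamma_{K_i}$ up to the finitely many symmetries of $\mathcal{Z}$.

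The main obstacle is converting the condition ``$\mu_i$ is the meridian of a knot in $S^3$'' into a sharp numerical bound. I would aim to prove that any two admissible slopes $\bar{\mu}_i,\bar{\mu}_j$ have geometric intersection number at most one on $T$; a classical lattice-geometry argument then shows that the collection of such primitive slopes contains at most three elements. The intersection estimate itself is the technical heart of the argument, and I expect it to combine a Cyclic-Surgery-Theorem--style inequality, applied after lifting to a suitable regular cover and Dehn filling along the lifted meridian, with a careful use of the fact that each filling yields the $3$-sphere rather than an arbitrary manifold with cyclic $\pi_1$. The possible $S^2(2,2,2,2)$ cusp cross-section of $\omin$ and the distinction between the cases $n_i=1$ and $n_i>1$ will need to be handled in parallel throughout.
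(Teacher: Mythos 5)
Your argument for part (1) is correct and is essentially the paper's: after conjugation both $\Gamma_K$ and $\Gamma_{K'}$ are normal in the lattice $\widetilde{Z}$ uniformizing the unique torus-cusped quotient of index at most two in $\omin$, with cyclic quotients $Z(K)$ and $Z(K')$, and the isomorphism theorems make ${\bf H}^3/(\Gamma_K\cap\Gamma_{K'})$ a common finite cyclic cover (this is Proposition \ref{prop:cyclicdef} combined with Lemma \ref{onetorus} and Proposition \ref{prop:same}). The architecture you propose for part (2) --- attach to each $K_i$ a slope in the cusp of $\mathcal{Z}={\bf H}^3/\widetilde{Z}$, show the assignment is injective, and bound pairwise distances by $1$ --- is also the paper's. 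But two things go wrong in the execution. First, a technical slip: the deck group $Z(K_i)$ does \emph{not} translate the cusp torus by $\mu_i/n_i$ (that would force $Z(K_i)$ to fix $K_i$ pointwise); it translates by a vector projecting to $\lambda_i/n_i$, and the slope you must use is the \emph{image of the meridian} $r(K_i)$, which is primitive downstairs precisely because the quotient of $N(K_i)$ is again a solid torus. This is not merely cosmetic, because the whole point of the distance estimate is that $r(K_i)$ is the slope whose filling of $\mathcal{Z}$ recovers $S^3/Z(K_i)$.

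Second, and decisively, the distance bound $\Delta(r(K_i),r(K_j))\le 1$ --- which you correctly identify as the technical heart --- is left unproved, and the strategy you sketch (lift to a regular cover and fill along the lifted meridian) points the wrong way. Filling a cyclic cover along the lift of a meridian produces a cyclic cover of $S^3$ branched over $K_i$, whose fundamental group need not be cyclic, so no Cyclic Surgery Theorem input is available there. The paper instead goes \emph{down}: the filling $\mathcal{Z}(r(K_i))=S^3/Z(K_i)$ is an orbi-lens space, whose singular locus is a geodesic sublink of the cores of a genus-one Heegaard splitting (Lemma \ref{s3cyclicquotient}); one drills this singular locus out of $\mathcal{Z}$ to get a hyperbolic manifold $\mathcal{Z}_K^0$ whose $r(K_i)$-fillings are lens spaces, solid tori, or $T^2\times I$, then performs an auxiliary hyperbolic Dehn filling on the extra boundary components to obtain a one-cusped hyperbolic manifold $M$ with both $M(r(K_i))$ and $M(r(K_j))$ having cyclic fundamental group, at which point \cite{CGLS} gives $\Delta\le 1$ (Lemma \ref{lem:intersection}). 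Without this chain of reductions --- or a genuine substitute --- your part (2) is an outline rather than a proof. Your injectivity sub-claim also needs the two concrete inputs the paper uses: that every self-homeomorphism of $\mathcal{Z}$ fixes each slope in its cusp (Lemma \ref{lem:slopes well-defined}), and the Gordon--Luecke theorem to pass from homeomorphic complements to equivalent knots.
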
 

In this article we analyze the case of hyperbolic knots which are commensurable to other hyperbolic knots and which do not admit hidden symmetries. However, many of our results hold for any hyperbolic knots with hidden symmetries which are cyclically commensurable to other knots. This conjecturally does not happen (see Conjecture \ref{conj:rigid}).  

Geometrisation combines with the work of Gonz{\'a}lez-Acu{\~n}a and Whitten \cite{GAW} to  determine close connections between the family of knots which are cyclically commensurable to other knots and the family of knots which admit lens space surgeries: if the complement of a knot $K$ is covered by another knot complement, then the covering is cyclic and this occurs if and only if $K$ admits a non-trivial lens space surgery. In this situation, a fundamental result of Ni \cite{NiF} implies that $K$ is fibred. Here we show that distinct knots without hidden symmetries which are  commensurable are obtained from primitive knots in orbi-lens spaces (\S \ref{orbi-lens}) which admit non-trivial orbi-lens space surgeries. Further, we prove an analogue of Ni's result in the orbifold setting: 

\begin{theorem} \label{thm: Ni analogue} 
Let $K$ be a knot in an orbi-lens space $\mathcal{L}$ which is primitive in $\mathcal{L}$. If $K$ admits a non-trivial orbi-lens space surgery, then the exterior of $K$ admits a fibring by $2$-orbifolds with base the circle.  
\end{theorem}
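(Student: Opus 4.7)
The plan is to lift the setup to an honest manifold cover of $\mathcal{L}$, apply Ni's theorem there, and then descend the resulting surface bundle structure equivariantly under the deck group to obtain the desired orbifold fibration.

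Every orbi-lens space is a quotient $\mathcal{L} = S^3/\Gamma$ by a finite cyclic group $\Gamma$ of orientation-preserving isometries preserving a Hopf fibration, with singular set contained in at most two Hopf fibres. Let $\Gamma' \leq \Gamma$ be the smallest subgroup for which $\tilde{\mathcal{L}} = S^3/\Gamma'$ is a manifold (a genuine lens space or $S^3$), and let $p : \tilde{\mathcal{L}} \to \mathcal{L}$ be the induced orbifold cover, with cyclic deck group $G = \Gamma/\Gamma'$. The primitivity of $K$ in $\mathcal{L}$ says that $[K]$ generates $H_1^{\mathrm{orb}}(\mathcal{L})$; a standard covering-space argument then shows that $\tilde K := p^{-1}(K)$ is a single knot in $\tilde{\mathcal{L}}$ and that $p|_{\tilde K}$ realizes the full cyclic $G$-cover. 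By naturality, the non-trivial orbi-lens space surgery on $K$ pulls back to a Dehn surgery on $\tilde K$ whose result is a manifold cover of an orbi-lens space, hence a genuine lens space.

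Thus $\tilde K$ is a knot in a lens space admitting a non-trivial lens space surgery. The appropriate extension of Ni's theorem \cite{NiF} --- that knots in closed oriented three-manifolds admitting L-space surgeries have fibred exteriors --- then guarantees that the exterior of $\tilde K$ in $\tilde{\mathcal{L}}$ fibres over $S^1$.

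To descend this fibration, observe that the cohomology class in $H^1$ determining it is the pullback of the Poincar\'e dual of $[K]$ in $\mathcal{L}$, and is therefore $G$-invariant. Because the Thurston-norm-minimising representative of a fibred class is unique up to isotopy, the $G$-action can be isotoped to preserve the fibration, giving a $G$-equivariant bundle structure on the exterior of $\tilde K$. Taking the quotient by $G$ then produces a fibration of the orbifold exterior of $K$ over $S^1$ whose fibres are $2$-orbifolds, completing the proof. The principal technical obstacle is this final equivariance step: one must verify that the cyclic deck group, which preserves the fibration class in cohomology, can in fact be made to preserve the fibration itself, which ultimately rests on uniqueness of Thurston-norm minimisers in fibred classes together with a standard equivariant-isotopy argument for fibres of fibred $3$-manifolds.
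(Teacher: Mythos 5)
Your lifting step is essentially sound (apart from the slip that the ``smallest subgroup $\Gamma'$ with $S^3/\Gamma'$ a manifold'' is the trivial one, so you may as well pass directly to the universal cover $S^3 \to \mathcal{L}$): primitivity makes $\tilde K = p^{-1}(K)$ a knot, the surgery slope pulls back to a non-trivial slope whose filling is a manifold with finite cyclic fundamental group, hence a lens space, and Ni's theorem (or Theorem \ref{thm:fibred}) fibres the exterior of $\tilde K$. The genuine gap is the descent. Knowing that the fibred class is $G$-invariant and that the norm-minimising representative of a fibred class is unique up to isotopy only tells you that each $g(F)$ is \emph{isotopic} to the fibre $F$; it does not produce a single fibration preserved simultaneously by the whole cyclic group, and it says nothing about how the fibres meet the fixed-point axis $\tilde\Sigma = p^{-1}(\Sigma(\mathcal{L}))$. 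Producing a $G$-equivariant fibration transverse to $\tilde\Sigma$ is \emph{equivalent} to producing the orbifold fibration downstairs, so the step you defer to ``a standard equivariant-isotopy argument'' is precisely the content of the theorem. In particular, your argument never confronts the arithmetic that makes the descent possible: downstairs one needs a fibration of the exterior of the Berge--Gabai knot $\bar K$ in the solid torus $L_0 = \mathcal{L}\setminus N(\Sigma(\mathcal{L}))$ whose fibres meet $\partial L_0$ in \emph{meridians of the singular solid torus}, and this exists only because the winding number of $\bar K$ is coprime to $p$ (equivalently, because the fibre can be arranged to meet each axis component coherently in exactly the linking number of points). Results of Edmonds--Livingston or Meeks--Scott on making finite actions on fibred $3$-manifolds fibre-preserving could be brought to bear, but they carry hypotheses and would still leave the transversality and coherence along the axis to be checked; none of this is automatic.

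The paper avoids descending a fibration altogether. It uses Gabai's classification of knots in solid tori with cosmetic surgeries to show that $\bar K$ is a $1$-bridge braid in $L_0$, and then proves directly (Theorem \ref{th_fibred}, via Brown's criterion applied to a two-generator one-relator presentation of the exterior together with Stallings' fibration theorem) that the exterior of a $1$-bridge braid on $n$ strands fibres with fibres meeting $\partial L_0$ in any prescribed curve whose algebraic winding number is coprime to $n$. Taking that curve to be the meridian of $N(\Sigma(\mathcal{L}))$, the fibration extends over the singular solid torus transversally to its core, and the $2$-orbifold structure on the fibres is imposed by declaring the intersection points with $\Sigma(\mathcal{L})$ to be cone points. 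To salvage your approach you would need to prove the equivariant fibration statement with control along the axis; it is more efficient to work in the quotient as the paper does.
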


\noindent Our next result is an interesting by-product of the method of proof of Theorem \ref{thm: Ni analogue}. For the definition of a $1$-bridge braid in a solid torus we refer to \S \ref{unwrapped}.

\begin{proposition} \label{fibred faces} 
Let $M$ be the exterior of a hyperbolic $1$-bridge braid in a solid torus $V$. Then each top-dimensional face of the Thurston norm ball in $H_2(M, \partial M; \mathbb R)$ is a fibred face.
\end{proposition}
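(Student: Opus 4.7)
The plan is to produce two linearly independent fibered cohomology classes $\phi_1, \phi_2 \in H^1(M; \mathbb{Z})$ and to verify that the four top-dimensional faces they carry already exhaust the boundary of the Thurston norm ball. Since $K$ is hyperbolic, $M$ is a hyperbolic $3$-manifold with two toral cusps, so $H_2(M, \partial M; \mathbb{R}) \cong \mathbb{R}^2$, and (because hyperbolicity excludes essential annuli) the Thurston norm $x$ on it is non-degenerate. Hence the unit ball $B_x$ is a compact, centrally-symmetric convex polygon, and its top-dimensional faces are its edges.

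For the first class $\phi_1$, I would work in the infinite cyclic cover $\tilde V = D^2 \times \mathbb{R}$ of $V = D^2 \times S^1$. The lift $\tilde K$ of the 1-bridge braid $K$ is a $\mathbb{Z}$-equivariant sequence of monotone braid strands, successive pairs of which are joined by a single bridge arc. A Morse-theoretic argument on $\tilde V \setminus N(\tilde K)$, cancelling the extra critical points introduced by the bridges against each other in successive fundamental domains, produces a $\mathbb{Z}$-equivariant fibration of the complement over $\mathbb{R}$, which descends to a fibration $\pi_1: M \to S^1$ with class $\phi_1$. For $\phi_2$, I would use the ``bridge-cancelling'' non-meridional slope $\delta$ on $T_K$: Dehn filling $M$ along $\delta$ returns the solid torus $V$ with core the underlying closed braid of smaller width. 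Applying the sutured-manifold / fibering argument underlying the proof of Theorem \ref{thm: Ni analogue}, this solid torus filling promotes to a fibration $\pi_2: M \to S^1$ whose class $\phi_2$ restricts on $T_K$ to the Poincar\'e dual of $\delta$. Since $\phi_1|_{T_K}$ is the dual of the meridian while $\phi_2|_{T_K}$ is dual to $\delta$, the classes $\phi_1$ and $\phi_2$ are linearly independent.

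By Thurston's fibered face theorem, $\phi_1$ and $\phi_2$ lie in the interiors of open cones over top-dimensional fibered faces $F_1, F_2 \subset \partial B_x$. To conclude, I would show that the four faces $\pm F_1, \pm F_2$ already cover all of $\partial B_x$; equivalently, that $B_x$ is the parallelogram with vertices $\pm \phi_i / x(\phi_i)$. This reduces to the single identity $x(\phi_1 + \phi_2) = x(\phi_1) + x(\phi_2)$, which I would verify by producing an explicit norm-minimizing representative of $\phi_1 + \phi_2$ via oriented cut-and-paste of the fibers of $\pi_1$ and $\pi_2$ along their boundaries on $T_K$ and $T_V$, and checking additivity of Euler characteristics.

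The main obstacle is precisely this norm equality: ruling out a surface representing $\phi_1 + \phi_2$ with strictly smaller Thurston norm than the cut-and-paste candidate. The natural tool is a sutured-manifold hierarchy for $M$ that is simultaneously taut for every integral class in the closed cone $\mathbb{R}_{\geq 0}\phi_1 + \mathbb{R}_{\geq 0}\phi_2$, building on Gabai's foliations series together with the very explicit handle structure enjoyed by 1-bridge braid exteriors. A secondary technical point is the first stage, where one must carefully cancel the critical points coming from the bridges in the unwrapping to upgrade the transverse foliation to a genuine fibration.
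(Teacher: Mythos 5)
Your endgame does not close, and the reduction you propose is self-defeating. Write $B_x$ for the unit ball of the Thurston norm $x$ on $H_2(M,\partial M;\mathbb R)\cong\mathbb R^2$. By Thurston's theorem a fibered class always projects into the \emph{interior} of the open cone over a top-dimensional fibred face, never to a vertex of $B_x$. So if $\phi_1,\phi_2$ are fibered and linearly independent, the additivity $x(\phi_1+\phi_2)=x(\phi_1)+x(\phi_2)$ that you aim to prove would force $\phi_1/x(\phi_1)$ and $\phi_2/x(\phi_2)$ onto a common closed face, hence (since each already lies in the interior of a face) into one and the same face $F_1=F_2$; but the two opposite edges $\pm F_1$ cannot by themselves bound a compact centrally symmetric polygon, so they cannot exhaust $\partial B_x$. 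Conversely, if $B_x$ really were the parallelogram with vertices $\pm\phi_i/x(\phi_i)$, the $\phi_i$ would sit at vertices and so could not be fibered. The two formulations you call ``equivalent'' are in fact incompatible, and in any case two fibered classes certify at most two pairs of fibred faces: without an exact determination of $B_x$ (which needs lower bounds on the norm, not just your cut-and-paste upper bounds) you cannot rule out further, possibly non-fibred, faces. The constructions of $\phi_1$ (Morse cancellation in the infinite cyclic cover) and of $\phi_2$ (the bridge-cancelling slope) are also only sketched, but that is secondary to this structural problem.

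The paper avoids all of this by producing not two but a projectively dense family of fibered classes. Theorem \ref{th_fibred} gives, for every essential curve $C\subset\partial V$ whose algebraic winding number $p$ is coprime to $n$, a fibration of $M$ whose fibre represents $nq\xi_1+p\xi_2$ in the basis $\xi_1,\xi_2$ of $H_2(M,\partial M)$ fixed there; the choice $p_m=nmba+1$, $q_m=mb^2$ shows that these fibre classes accumulate projectively on every ray $\mathbb R_{+}(a\xi_1+b\xi_2)$. Since the open cone over any top-dimensional face is an open set, it must contain one of these fibre classes, and Thurston's theorem \cite[Theorem 2]{Th} then forces that face to be fibred. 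No computation of the norm ball is required. To salvage your approach you would need either to compute $B_x$ exactly, or, better, to replace your two fibrations by the one-parameter family of fibrations that Theorem \ref{th_fibred} already provides.
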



\begin{theorem}\label{th:properties} 
Let $K$ be a hyperbolic knot. If $|\mathcal{CC}(K)| \geq 2$ then:\\
$(1)$ $K$ is a fibred knot.\\
$(2)$ the genus of $K$ is the same as that of any $K' \in \mathcal{CC}(K)$.\\ 
$(3)$ the volume of $K$ is different from that of any $K' \in \mathcal{CC}(K) \setminus {K}$.  In particular, the only mutant of $K$ contained in $\mathcal{CC}(K)$ is $K$. \\
$(4)$ $K$ is chiral and not commensurable with its mirror image. 
\end{theorem}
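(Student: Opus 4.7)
The plan is to derive all four assertions from the orbi-lens-space framework built up earlier in the paper. Since $|\mathcal{CC}(K)|\geq 2$, fix $K'\in\mathcal{CC}(K)\setminus\{K\}$. As described in the discussion preceding Theorem \ref{thm: Ni analogue}, the complements $S^3\setminus K$ and $S^3\setminus K'$ are both cyclic covers of the exterior of a common primitive knot $\widetilde{K}$ in an orbi-lens space $\mathcal{L}$, and $\widetilde{K}$ admits a non-trivial orbi-lens-space surgery. Every subsequent step pulls information from $\widetilde K$ down to $K$ and $K'$.

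Statement (1) is immediate from Theorem \ref{thm: Ni analogue}: the exterior of $\widetilde K$ fibres by $2$-orbifolds over $S^1$, and pulling this fibration back along the cyclic cover $S^3\setminus K\to\mathcal{L}\setminus\widetilde K$ produces a genuine surface bundle structure on $S^3\setminus K$. For (2), the same reasoning fibres $S^3\setminus K'$; to compare Euler characteristics, note that since $H_1(S^3\setminus K)=\zz$ is generated by the meridian and the fibration class is the unique primitive element of $H^1$, the cyclic covering is forced to be \emph{horizontal}: it is a pullback from a finite cyclic cover of the base circle and leaves the abstract $2$-orbifold fibre $\mathcal{F}$ of $\mathcal{L}\setminus\widetilde K$ unchanged. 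The identical conclusion for $K'$ identifies $F_K$ and $F_{K'}$ as the same cover of $\mathcal{F}$, yielding $g(K)=g(K')$.

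For (3), every $L\in\mathcal{CC}(K)$ has complement which is a cyclic cover of $\omin$ of some degree $n_L$, so $\mathrm{vol}(L)=n_L\cdot\mathrm{vol}(\omin)$. A case analysis of which cyclic covers of $\omin$ admit an orbifold Dehn filling on their cusp that produces the $3$-sphere --- precisely the analysis that underlies the bound $|\mathcal{CC}(K)|\leq 3$ of Theorem \ref{thm:cyclic}(2) --- shows that distinct knots in $\mathcal{CC}(K)$ correspond to distinct covering degrees $n_L$ and hence to distinct volumes. Since mutation preserves hyperbolic volume, any mutant of $K$ lying in $\mathcal{CC}(K)$ must therefore coincide with $K$ itself.

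For (4), suppose $K$ were commensurable with its mirror $K^{*}$. Theorem \ref{thm:cyclic}(1) places $K^{*}$ inside $\mathcal{CC}(K)$, and since $\mathrm{vol}(K^{*})=\mathrm{vol}(K)$, part (3) forces $K=K^{*}$; that is, $K$ is amphichiral. An orientation-reversing self-isometry of $S^3\setminus K$ then descends to an orientation-reversing symmetry of $\mathcal{L}$ preserving $\widetilde K$ and sending the orbi-lens-space surgery slope to its negative, so the orbi-lens-space filling on $\widetilde K$ would have to agree with its mirror filling. The main obstacle of the proof, and the step requiring the most care, is to exclude this coincidence: one must exploit the chirality built into the orbi-lens-space structure on the surgery of the primitive knot $\widetilde K$ --- combined with the absence of hidden symmetries --- to derive a contradiction, thereby establishing simultaneously that $K$ is chiral and that $K$ is not commensurable with its mirror image.
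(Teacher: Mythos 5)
Your overall framework --- reducing everything to the primitive knot $\widetilde K$ in the orbi-lens space $\mathcal{L}$ via Proposition \ref{prop:commensurable} and Theorem \ref{thm: Ni analogue} --- is the paper's framework, and your part (1) is correct and essentially the paper's argument. The other three parts each have problems.

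For (2), the claim that the cyclic cover is ``horizontal'' and ``leaves the abstract $2$-orbifold fibre $\mathcal{F}$ unchanged'' fails whenever $K$ is periodic: there $\mathcal{F}$ has cone points of order $a>1$, while the fibre $F_K$ of $S^3\setminus K$ is a genuine surface, so $F_K\to\mathcal{F}$ must have degree divisible by $a$ (one computes it is exactly $a$, because the meridian of $K$ generates $H_1(S^3\setminus K)$ and maps to $\bar\mu$, on which the fibration class of $\mathcal{Z}_K$ evaluates to $\pm p$, while the cover has degree $ap$). The conclusion survives, but only because this degree is the \emph{same} number $a$ for $K$ and for $K'$ and orbifold Euler characteristic is multiplicative under orbifold covers; ``the same cover of $\mathcal{F}$'' is neither true as stated nor what is needed. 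For (3), the assertion that ``a case analysis \dots shows that distinct knots correspond to distinct covering degrees'' is precisely the content to be proved, and it does not follow from the $\Delta\le 1$ analysis behind $|\mathcal{CC}(K)|\le 3$: a priori two orbi-lens-space filling slopes of $\mathcal{Z}_K$ at distance $1$ could yield quotients of the same order. The paper's Proposition \ref{prop:volume} supplies the missing argument: in the free case the two meridians represent $c\bar\mu+e\bar\lambda$ and $c'\bar\mu+e'\bar\lambda$ with $ce'-ec'=\pm1$, whence $\gcd(c,c')=1$, and $c=c'=1$ is excluded by Gordon--Luecke; in the periodic case $|Z(K)|=ap$ and $|Z(K')|=ap'$ with $\gcd(p,p')=1$ by Lemma \ref{coprime}, and $p=p'=1$ is excluded by Gabai's knottedness result for the core $C_2$. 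None of this appears in your proposal.

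For (4) you correctly reduce to showing that an amphichiral $K$ cannot satisfy $|\mathcal{CC}(K)|\ge 2$, but you explicitly leave that step unproved (``the main obstacle \dots is to exclude this coincidence''), and your proposed mechanism (comparing a filling with its mirror filling) is not the one that works. The missing idea is the parity argument of Proposition \ref{prop:chiral}: an orientation-reversing isometry of $S^3\setminus K$ descends to $\bar\theta$ on $\mathcal{Z}_K$ with $\bar\theta_*(\bar\lambda)=\epsilon\bar\lambda$ and $\bar\theta_*(\bar\mu)=-\epsilon\bar\mu$, so $\Delta(\alpha,\bar\theta_*(\alpha))$ is even for every slope $\alpha$; since the orbi-lens-space filling slopes are permuted by $\bar\theta$ and are pairwise at distance at most $1$, each is $\bar\theta$-invariant, hence equals $\bar\mu$ or $\bar\lambda$, and $\bar\lambda$, being the rational longitude, cannot yield an orbi-lens space. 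Thus there is only one such slope and $|\mathcal{CC}(K)|=1$, the desired contradiction. Without this (or an equivalent) argument, part (4) is not established.
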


In particular this result holds for a hyperbolic knot $K$ without hidden symmetries and any $K' \in \mathcal{C}(K) \setminus K$.

We pause to note the marked difference between the case of flexible and rigid commensurability classes containing knot complements. Recall that the commensurability class of the two dodecahedral knots \cite{AR} is the only known rigid commensurability class containing knot complements. These knots do not satisfy any of the conditions above: one dodecahedral knot is fibred, the other isn't; the knots have different genus; they have the same volume; the knots are both amphichiral \cite[12.1]{AR2}.  In addition, they are not cyclically commensurable in contrast with Theorem \ref{thm:cyclic}. 

 A knot $K$ is {\it periodic} if it admits a non-free symmetry with an axis disjoint from $K$. As a consequence of the works of Berge \cite{Bergetori} and Gabai \cite{Gabai1} we obtain the following  characterisation of cyclic commensurability classes of periodic knots. 
 We refer to \S \ref{unwrapped} for the definitions of Berge-Gabai knots  and unwrapped Berge-Gabai knots.

 \begin{theorem}\label{th:periodic} 
Let $K$ be a periodic hyperbolic knot.
 If $|\mathcal{CC}(K)| \geq 2$ then:\\
 $(1)$ $K$ has a unique axis of symmetry disjoint from $K$.\\
 $(2)$ $K$ is obtained by unwrapping a Berge-Gabai knot $\bar K$ in an orbi-lens space. In particular $K$ is strongly invertible.\\
 $(3)$ each $K' \in \mathcal{CC}(K)$ is determined by unwrapping the Berge-Gabai knot represented by the core of the surgery solid torus in an orbi-lens space obtained by Dehn surgery along $\bar K$.
 \end{theorem}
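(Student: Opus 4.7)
The plan is to push $K$ through its periodic symmetry to a primitive knot in an orbi-lens space, identify that knot as a Berge-Gabai knot via the orbi-Dehn surgery provided by cyclic commensurability, and then read off the description of $\mathcal{CC}(K)$ from the surgery picture.

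First I set up the quotient. Let $\rho$ be a rotation realising the periodic symmetry of $K$. Since the axis of $\rho$ avoids $K$, $\rho$ acts freely on $K$, so $\rho$ belongs to the (cyclic) group $Z(K)$ of free symmetries. The orbifold $\mathcal{L} := S^3 / Z(K)$ is then an orbi-lens space, and $\bar{K} := K/Z(K)$ is a knot in $\mathcal{L}$ disjoint from the singular locus. The cyclic covering $S^3 \setminus K \to \mathcal{L} \setminus \bar{K}$ shows that $\bar{K}$ is primitive in $\mathcal{L}$. The hypothesis $|\mathcal{CC}(K)| \geq 2$ supplies a second knot $K' \in \mathcal{CC}(K)$, and using the description of the cyclic commensurability class of $K$ in terms of $\omin$ developed in the earlier sections, the common cyclic cover of $S^3 \setminus K$ and $S^3 \setminus K'$ produces a non-trivial orbi-Dehn filling of the exterior of $\bar{K}$ yielding a second orbi-lens space $\mathcal{L}'$; the knot $K'$ is then the unwrapping of the core $\bar{K}'$ of the filling solid torus under the cyclic orbi-cover $S^3 \to \mathcal{L}'$.

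The next step, which gives part (2), is the identification of $\bar K$. Theorem~\ref{thm: Ni analogue} shows that its exterior in $\mathcal{L}$ fibers as a $2$-orbifold bundle over $S^1$. Together with Proposition~\ref{fibred faces} and the Berge-Gabai classification of knots in a solid torus admitting non-trivial solid-torus surgeries, this forces the pair $(V, \bar K)$, for $V$ the Heegaard solid torus of $\mathcal{L}$ containing $\bar K$, to be a Berge-Gabai knot in the orbifold sense. The strong invertibility of $K$ then descends from that of the underlying $1$-bridge braid or torus knot: the strong involution lifts through the cyclic cover $S^3 \to \mathcal{L}$ to an involution of $(S^3, K)$. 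Part (3) is the same construction applied symmetrically: each $K' \in \mathcal{CC}(K) \setminus \{K\}$ corresponds to a distinct orbi-lens space Dehn filling of the exterior of $\bar K$ and is the unwrapping of the core of that filling.

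For part (1), any axis of symmetry of $K$ disjoint from $K$ gives a rotation in $Z(K)$ whose fixed set is that axis. Cyclicity of $Z(K)$ does not by itself exclude multiple fixed circles (different powers of a ``double rotation'' generator in $\mathrm{SO}(4)$ can fix distinct circles in $S^3$), so the argument must use the Berge-Gabai structure: a second axis $A'$ would project to a second singular circle of $\mathcal{L}$ lying in the Heegaard solid torus $V$ containing $\bar K$ or in its complement, and the orbifold Berge-Gabai classification together with the rigidity of the non-trivial orbi-lens space surgery on $\bar K$ forces it to coincide with the image of the original axis. The main obstacle is precisely this orbifold extension of the Berge-Gabai classification, used both in the identification of $\bar K$ and in the uniqueness argument: one must show that a primitive knot in an orbi-lens space whose exterior fibers and admits a non-trivial orbi-lens space Dehn filling is a Berge-Gabai knot inside a Heegaard solid torus. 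This is the principal new geometric input and is what makes parts (1)--(3) simultaneously accessible.
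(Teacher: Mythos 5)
Your overall skeleton---quotient by $Z(K)$ to get a primitive knot $\bar K$ in an orbi-lens space, use the second knot $K'$ to produce a non-trivial orbi-lens space filling of the exterior of $\bar K$, and recover each $K'$ by unwrapping the core of the corresponding filling solid torus---matches the paper. But the two places you identify as requiring real work are handled by the wrong tools, and one of them is left as a genuine gap.

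For part (2), you propose to identify $\bar K$ as a Berge--Gabai knot by combining Theorem~\ref{thm: Ni analogue}, Proposition~\ref{fibred faces}, and an ``orbifold Berge--Gabai classification.'' This is backwards relative to the logical order of the paper: Theorem~\ref{thm: Ni analogue} is itself \emph{proved} by first showing that $\bar K$ is a Berge--Gabai knot in the Heegaard solid torus complementary to the singular locus and then applying the fibring theorem for $1$-bridge braids (Theorem~\ref{th_fibred}), so invoking it here is circular; and in any case fibredness of the exterior does not by itself force $\bar K$ to be a $1$-bridge braid. The correct and much more direct input is Gabai's theorem on surgery on knots in solid tori \cite{Gabai1} (Proposition~\ref{Berge-Gabai} in the paper): the second filling slope restricts to a non-trivial cosmetic surgery slope of $\bar K$ viewed in the solid torus $V_1 = \mathcal{Z}_K^0(r(K))$, and a knot in a solid torus admitting such a surgery is contained in a ball, is a $0$-bridge braid, or is a $1$-bridge braid; hyperbolicity kills the first two options. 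No new ``orbifold extension'' of the classification is needed---the orbifold structure is confined to the other Heegaard solid torus. (Your strong invertibility argument is essentially right: the paper notes that a $1$-bridge braid lies on a genus $2$ Heegaard surface, so $\mathcal{L}$ admits an involution reversing $\bar K$, whence $\mathcal{Z}_K$ is not minimal and $\Isom^+(S^3\setminus K)\ne Z(K)$.)

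For part (1) there is a genuine gap. You correctly observe that cyclicity of $Z(K)$ does not exclude two axes, but your proposed fix (``the orbifold Berge--Gabai classification together with the rigidity of the non-trivial orbi-lens space surgery forces it to coincide'') is not an argument. The paper's route is: a second axis forces $|\Sigma(\mathcal{Z}_K)|=2$, so that removing a neighbourhood of the singular locus from $\mathcal{L}$ leaves $S^1\times S^1\times I$ rather than a solid torus, and the image of $K$ would then be a hyperbolic knot in $T^2\times I$ admitting a non-trivial cosmetic surgery. This is impossible by Lemma~\ref{lem:product}, whose proof needs Gabai's result that a Thurston-norm-minimizing surface remains norm-minimizing under all but at most one Dehn filling \cite{Gabai2}: two fillings giving $T^2\times I$ would force the norm-minimizing representative of the annulus class to be an essential annulus, contradicting hyperbolicity. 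This cosmetic-surgery obstruction in $T^2\times I$ is the missing ingredient in your treatment of (1), and it is independent of the Berge--Gabai structure of $\bar K$.
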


In particular this result holds for a periodic hyperbolic knot $K$ without hidden symmetries and any $K' \in \mathcal{C}(K)$.

The proof of Theorem \ref{th:periodic} reduces the characterisation of hyperbolic knots $K \subset S^3$ such that $|\mathcal{CC}(K)| \geq 2$  to the case where $Z(K)$ acts freely on $S^3$ and to the construction of all primitive knots in a lens space with a non-trivial lens space surgery.  It is a result of Bonahon and Otal \cite{Bonotal} that for each $g \geq 1$, a lens space admits a unique genus $g$ Heegaard splitting, which is a stabilization of the genus 1 splitting. 

 \begin{problem}\label{berge} 
Characterize primitive knots $\bar K$ in a lens space $L$ which admit a non-trivial lens space surgery. In particular, is every such knot a doubly primitive knot on the genus 2 Heegaard surface of  $L$?
\end{problem}

Suppose $\bar K$ is in $S^3$.  Then this problem is the setting of the Berge conjecture, which contends that a knot in $S^3$ which admits a non-trivial lens space surgery is doubly primitive. Doubly primitive knots are knots which lie on the genus 2 Heegaard surface in such a way that the knot represents a generator of the fundamental group of each handlebody.

A fundamental result of Schwartz \cite{Sch} implies that the fundamental groups of two hyperbolic knots $K, K'$ are quasi-isometric if and only if $K'$ is commensurable with $K$ or with its mirror image. Proposition \ref{prop:volume} below shows that a knot without hidden symmetries cannot be commensurable to its mirror image. Therefore, as a consequence of Theorems \ref{thm:cyclic}, \ref{th:properties} and \ref{th:periodic} we obtain the following results on quasi-isometry classes of knot groups:

\begin{corollary}\label{cor:uniqueness} Let $K$ be a  hyperbolic knot without hidden symmetries. Then there are at most three knots $K'$ with group $\pi_1(S^3 \setminus K ')$ quasi-isometric to $\pi_1(S^3 \setminus K )$. Moreover $\pi_1(S^3 \setminus K )$ is the unique knot group in its quasi-isometry class in the following cases:\\
$(i)$ $K$ is not fibred.\\
$(ii)$ $K$ is amphichiral.\\
$(iii)$ $K$ is periodic and is not an unwrapped Berge-Gabai knot; for instance, $K$ is periodic but not strongly invertible.\\
$(iv)$ $K$ is periodic with two distinct axes of symmetry. 
\qed
\end{corollary}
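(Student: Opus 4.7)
The plan is to use Schwartz's theorem to translate the quasi-isometry statement into one about commensurability, and then to combine Proposition~\ref{prop:volume} with the structural results Theorems~\ref{thm:cyclic}, \ref{th:properties}, and \ref{th:periodic}.

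First, by Schwartz's theorem, $\pi_1(S^3\setminus K')$ is quasi-isometric to $\pi_1(S^3\setminus K)$ precisely when $K'\in\mathcal{C}(K)\cup\mathcal{C}(\bar K)$. Having no hidden symmetries is a property of the commensurability class of $\omin$ (it is equivalent, by Neumann--Reid, to $\omin$ having a flexible cusp), so every knot in $\mathcal{C}(K)\cup\mathcal{C}(\bar K)$ inherits this property, and Proposition~\ref{prop:volume} therefore gives $\mathcal{C}(K)\cap\mathcal{C}(\bar K)=\varnothing$. Theorem~\ref{thm:cyclic}(1) replaces commensurability by cyclic commensurability in the no-hidden-symmetries setting, so $\mathcal{C}(K)=\mathcal{CC}(K)$, and Theorem~\ref{thm:cyclic}(2) bounds this set by three. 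Mirroring provides a bijection $\mathcal{C}(K)\leftrightarrow\mathcal{C}(\bar K)$, and since the abstract knot group does not distinguish $K$ from $\bar K$, this yields the stated bound of three.

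The four uniqueness cases are each handled by taking the contrapositive of a structural theorem and showing $|\mathcal{CC}(K)|=1$. For case~(i), Theorem~\ref{th:properties}(1) forces $K$ to be fibred whenever $|\mathcal{CC}(K)|\ge 2$. For case~(ii), Theorem~\ref{th:properties}(4) forces $K$ to be chiral whenever $|\mathcal{CC}(K)|\ge 2$; I would also observe that under the no-hidden-symmetries hypothesis, amphichirality would make $K$ commensurable with $\bar K$, contradicting Proposition~\ref{prop:volume}. For case~(iii), Theorem~\ref{th:periodic}(2) asserts that a periodic $K$ with $|\mathcal{CC}(K)|\ge 2$ is obtained by unwrapping a Berge--Gabai knot and is in particular strongly invertible, which handles both the main hypothesis and the illustrative example. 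For case~(iv), Theorem~\ref{th:periodic}(1) guarantees a unique axis of symmetry disjoint from $K$ whenever $K$ is periodic with $|\mathcal{CC}(K)|\ge 2$, so two axes force $|\mathcal{CC}(K)|=1$. In each case $\mathcal{C}(K)=\{K\}$, and by mirroring $\mathcal{C}(\bar K)=\{\bar K\}$, so $\pi_1(S^3\setminus K)$ is the unique knot group in its quasi-isometry class.

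The main point to be careful about is the orientation bookkeeping: one must check that ``no hidden symmetries'' passes from $K$ to $\bar K$ and to every knot in $\mathcal{C}(K)\cup\mathcal{C}(\bar K)$, so that the structural theorems apply uniformly, and one must reconcile the oriented conventions of the paper with the mirror-insensitivity of the knot group when asserting the three-knot bound. Both are routine given the Neumann--Reid characterisation of hidden symmetries in terms of the flexibility of the cusp of $\omin$ already recalled in the introduction.
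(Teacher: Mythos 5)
Your proof is correct and follows exactly the route the paper intends: the corollary is stated there without a written proof, as a direct consequence of Schwartz's theorem, Proposition \ref{prop:volume}, and Theorems \ref{thm:cyclic}, \ref{th:properties} and \ref{th:periodic}, which is precisely how you assemble it. The only nit is that the blanket claim $\mathcal{C}(K)\cap\mathcal{C}(\bar K)=\varnothing$ fails trivially when $K$ is amphichiral (where $K=\bar K$), but since you ultimately count knot groups via the mirroring bijection rather than knots, this does not affect the bound of three, and you treat the amphichiral case separately in (ii) anyway.
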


The paper is organized as follows.  Theorem \ref{thm:cyclic} is proved in \S \ref{3conj}. Theorem \ref{th:periodic} and (3) of Theorem \ref{th:properties} are contained in \S \ref{unwrapped}. Theorem \ref{thm: Ni analogue}, parts (1) and (2) of Theorem \ref{th:properties}, and Proposition \ref{fibred faces} are proven in \S \ref{fibred}. Part (4) of Theorem \ref{th:properties} is proven in \S \ref{amphi}. Sections \ref{conventions} and \ref{orbi-lens} are devoted to conventions and background on certain spherical orbifolds. \\

\noindent {\bf Acknowledgements}. We thank Jacob Rasmussen for explaining the proof that a knot in a lens space which admits a non-trivial lens space surgery has fibred complement.  We also benefited from helpful conversations with Walter Neumann and Alan Reid.

\section{Slopes, Dehn filling and cusp types} \label{conventions}

A {\it slope} on a torus $T$ is an isotopy class of essential simple closed curves. The set of slopes on $T$ corresponds bijectively, in the obvious way, with $\pm$-classes of primitive elements of $H_1(T)$. Thus to each slope $r$ we associate the primitive classes $\pm \alpha(r) \in H_1(T)$ represented by a simple closed curve in the class of $r$. The {\it distance} between two slopes $r, r'$ on $T$ is given by $\Delta(r, r') = |\alpha(r) \cdot \alpha(r')|$. 

Given a slope $r$ on on a torus boundary component $T$ of a $3$-manifold $M$, the {\it $r$-Dehn filling} of $M$ with slope $r$ is the $3$-manifold $M(T; r) := (S^1 \times D^2) \cup_f M$ where $f$ is any homeomorphism $\partial(S^1 \times D^2) \to T$ such that $f(\{*\} \times \partial D^2)$ represents $r$. It is well-known that $M(T; r)$ is independent of the choice of $f$. When there is no risk of ambiguity, we shall usually denote $M(T;r)$ by $M(r)$. 

Recall that topologically, a cusp of a complete, finite volume, orientable, hyperbolic $3$-orbifold is of the form $\mathcal{B} \times \mathbb R$ where $\mathcal{B}$ is a closed, connected, orientable, Euclidean $2$-orbifold. In this case, we say that the cusp is a $\mathcal{B}$ {\it cusp}. 

A {\it slope} $r$ in a torus cusp of a complete, non-compact, finite volume hyperbolic $3$-orbifold $\mathcal{O}$ is a cusp isotopy class of essential simple closed curves which lie on some torus section of the cusp. Inclusion induces a bijection between the slopes on a torus cross-section of the cusp with those in the cusp, and we identify these sets below. 

\begin{lemma} \label{lem:slopes well-defined} 
Let $\mathcal{O}$ be a complete, finite volume, orientable, hyperbolic $3$-orbifold which has one end, a torus cusp, and let $r$ be a slope in this cusp. Then for any orientation-preserving homeomorphism $f: \mathcal{O} \to \mathcal{O}$, the slope $f(r)$ equals $r$. 
\end{lemma}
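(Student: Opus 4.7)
The plan is to reduce to the case that $f$ is an isometry and then analyze the induced Euclidean isometry on the cusp cross-section. By Mostow--Prasad rigidity for finite-volume hyperbolic $3$-orbifolds, $f$ is isotopic to an orientation-preserving isometry $\phi$ of $\mathcal{O}$; since slopes are defined up to cusp isotopy I may replace $f$ by $\phi$. Because $\mathcal{O}$ has a unique end, $\phi$ preserves the cusp, and its restriction to a horospherical torus cross-section $T$ is (up to translation along the cusp) an orientation-preserving Euclidean isometry $\bar\phi$ of $T$. Writing $\bar\phi$ as a translation composed with a linear rotation $A \in \mathrm{SO}(2)$, the translation part is isotopic to the identity on $T$ and so acts trivially on slopes; hence it suffices to show that the linear part $A$ acts on $H_1(T;\mathbb{Z})$ as $\pm I$, for $\pm I$ fixes every $\pm$-class of primitive elements and thus every slope.

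To exploit the finite order of $\phi$, note that $\mathrm{Isom}^+(\mathcal{O})$ is a finite group, so $A$ is a finite-order element of $SL_2(\mathbb{Z})$. Such an element is either $\pm I$ or a rotation of order $3$, $4$, or $6$; in the latter three cases the eigenvalues of $A$ are primitive roots of unity of order $3$, $4$, or $6$, hence non-real, so $A$ has no real invariant line in $\mathbb{Q}^2$. The crux will therefore be to produce a distinguished $1$-dimensional $\phi_*$-invariant rational subspace of $H_1(T;\mathbb{Q})$. For this I set
\[
  L \;=\; \ker\!\bigl(\,H_1(T;\mathbb{Q}) \longrightarrow H_1(\mathcal{O};\mathbb{Q})\,\bigr)
\]
and will check that $\dim_{\mathbb{Q}} L = 1$. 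Rational orbifold homology agrees with rational singular homology of the underlying topological space (the comparison kernel is generated by torsion meridional classes around the singular locus), so after truncating the cusp this reduces to the half-lives-half-dies lemma for a compact orientable $3$-manifold with a single torus boundary component.

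Naturality of the long exact sequence of the pair $(\mathcal{O}, \partial \mathcal{O})$ shows that $L$ is invariant under $\phi_*$, and therefore $A$ preserves the real line spanned by $L$. Combined with the eigenvalue analysis above, this forces $A = \pm I$ and completes the argument. The main point I expect to require care is the half-lives-half-dies input in the orbifold setting; reducing it to the classical manifold statement via the underlying-space comparison is clean but should be recorded explicitly. Everything else is a short combination of Mostow rigidity with finite-order linear algebra over $\mathbb{Z}$ and $\mathbb{Q}$.
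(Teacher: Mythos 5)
Your argument is correct, and it shares its two essential inputs with the paper's proof --- Mostow--Prasad rigidity and the rational longitude (your $L$ is exactly the $\mathbb{Q}$-span of the class the paper calls $\lambda_M$, and your reduction of the half-lives-half-dies count to the underlying compact manifold is the right way to justify $\dim_{\mathbb{Q}} L = 1$ in the orbifold setting) --- but the finishing mechanism is genuinely different. The paper does not pass to an isometry or invoke finiteness of $\Isom^+(\mathcal{O})$; it conjugates the holonomy representation by an element $A\in\PSL(2,\mathbb{C})$ supplied by rigidity, observes that $A$ must carry the parabolic $\rho(\lambda_M)$ to $\rho(\lambda_M)^{\pm 1}$, and reads off $f_*=\pm I$ on $H_1(\partial M)$ from an explicit upper-triangular matrix computation. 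You instead note that the induced map on $H_1(T;\mathbb{Z})$ is a finite-order element of $SL_2(\mathbb{Z})$ (the linear part of an orientation-preserving isometry of a flat torus), that any such element other than $\pm I$ has order $3$, $4$ or $6$ and hence non-real eigenvalues, and that preservation of the rational line $L$ rules these out. The two routes establish the same dichotomy ($f_*=I$ or $f_*=-I$); yours trades the explicit parabolic computation in $\PSL(2,\mathbb{C})$ for the (standard) classification of torsion in $SL_2(\mathbb{Z})$, and it isolates more cleanly that the only facts needed are finiteness of the isometry group and the existence of a distinguished invariant rational line in the cusp homology. The one point you flag as needing care --- the orbifold-to-underlying-space comparison for rational $H_1$ --- is indeed fine, since the kernel of $H_1^{\rm orb}(\mathcal{O})\to H_1(|\mathcal{O}|)$ is generated by meridians of the singular circles, each of which is torsion.
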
 

\begin{proof}
Our assumptions imply that $|\mathcal{O}|$ is the interior of a compact, connected, orientable $3$-manifold $M$ with torus boundary to which we can extend $f$. To prove the lemma it suffices to show that $f$ acts as multiplication by $\pm 1$ on $H_1(\partial M)$. 
First note that $f_*(\lambda_M) = \epsilon \lambda_M$ where $\epsilon \in \{\pm 1\}$ and $\lambda_M \in H_1(\partial M)$ is the rational longitude of $M$. (Thus $\pm \lambda_M$ are the only primitive classes in $H_1(\partial M) \equiv \pi_1(\partial M)$ which are trivial in $H_1(M; \mathbb Q)$.) 

Let $\rho: \pi_1(\mathcal{O}) \to \PSL(2, \mathbb{C})$ be a discrete faithful representation. By Mostow-Prasad rigidity, there is an element $A \in \PSL(2, \mathbb{C})$ such that $\rho \circ f_\# = A \rho A^{-1}$. In particular, 
$$\rho(\lambda_M)^\epsilon = \rho(f_\#(\lambda_M)) = A \rho(\lambda_M) A^{-1} .$$ 
Without loss of generality we can assume that $\rho(\lambda_M)$ is upper triangular and parabolic. Then it is easy to verify that $A$ is upper triangular and parabolic when $\epsilon = 1$ or upper triangular with diagonal entries $\pm i$ when $\epsilon = -1$. A simple calculation then shows that when $\epsilon = 1$, $\rho(f_\#(\gamma)) = \rho(\gamma)$ for each $\gamma \in \pi_1(\partial M)$, which implies that $f_*$ is the identity. Similarly when $\epsilon = -1$ it's easy to see that $f_* = -I$. 
\end{proof}

Given two slopes $r, r'$ in the cusp, the reader will verify that the distance between two of their representatives contained in some torus cross-section of the cusp is independent of the cross-section, and we define the {\it distance} between $r$ and  $r'$, denoted $\Delta(r,r')$, to be this number. 

Let $r$ be a slope in a torus cusp of $\mathcal{O}$ and $\mathcal{\hat O}$ an orbifold obtained by truncating $\mathcal{O}$ along the cusp. The {\it Dehn filling of $\mathcal{O}$ of slope $r$}, denoted $\mathcal{O}(r)$, is the $r$-Dehn filling of $\mathcal{\hat O}$.

\section{Orbi-lens spaces}\label{orbi-lens}

We denote the singular set of an orbifold $\mathcal{O}$ by $\Sigma(\mathcal{O})$ throughout the paper. 

An {\it orbi-lens space} is the quotient orbifold of $S^3$ by a finite cyclic subgroup of $SO(4)$. We begin by examining their structure. 

The first homology group of an orbifold is the abelianisation of its fundamental group. 

A knot in an orbi-lens space $\mathcal{L}$ is {\it primitive} if it carries a generator of $H_1(\mathcal{L})$. 

\begin{lemma} \label{s3cyclicquotient} 
Let $Z$ be a finite cyclic subgroup of $SO(4)$ of order $n$ and fix a generator $\psi$ of $Z$. There are a genus one Heegaard splitting $S^3 = V_1 \cup V_2$, cores $C_1, C_2$ of $V_1, V_2$, and integers $a_1, a_2 \geq 1$ such that \\
$(1)$ both $V_1$ and $V_2$ are $Z$-invariant. \\
$(2)$ $\psi$ acts by rotation of order $a_1$ on $C_1$ and order $a_2$ on $C_2$. Moreover, the $Z$-isotropy subgroup of a point in 
\vspace{-.15cm} 
\begin{itemize}
\vspace{-.15cm} \item[\hspace{3mm} $\bullet$ ] $S^3 \setminus (C_1 \cup C_2)$ is trivial. 
\vspace{-.05cm} \item[\hspace{3mm} $\bullet$ ] $C_1$ is generated by $\psi^{a_1}$ and has order $\bar a_2 = n/a_1$, 
\vspace{-.05cm} \item[\hspace{3mm} $\bullet$ ] $C_2$ is generated by $\psi^{a_2}$ and has order $\bar a_1 = n/a_2$.  
\end{itemize}
\vspace{-.1cm} Thus $n = \hbox{lcm}(a_1, a_2), \bar a_1 = a_1/\gcd(a_1,a_2), \bar a_2 = a_2/\gcd(a_1,a_2)$, so $\gcd(\bar a_1, \bar a_2) = 1$. \\
$(3)$ $|S^3/Z|$ is the lens space with fundamental group $\mathbb Z / \gcd(a_1, a_2)$ and genus one Heegaard splitting $(V_1/Z) \cup (V_2/Z)$. The ramification index of a point $y \in |S^3/Z|$ is $\bar a_2$ if $y \in C_1/Z$, $\bar a_1$ if  $y \in C_2/Z$, and $1$ otherwise. Hence $\Sigma(S^3/Z) \subseteq (C_1/Z) \cup (C_2/Z)$. 
\end{lemma}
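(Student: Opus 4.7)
The plan is to exploit the fact that any element of $SO(4)$ is simultaneously block-diagonalizable into two $2\times2$ rotation blocks. Identify $S^3$ with the unit sphere in $\mathbb{C}^2$. After choosing an appropriate orthonormal frame, a generator $\psi$ of $Z$ can be written as
\[
\psi(z,w) = (\zeta^{p_1} z,\; \zeta^{p_2} w),\qquad \zeta=e^{2\pi i/n},
\]
where we may assume $\gcd(n,p_1,p_2)=1$ (otherwise $\psi$ would have order smaller than $n$). The two invariant great circles are then $C_1=\{w=0\}$ and $C_2=\{z=0\}$, which form a Hopf link.

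Next I would define the Heegaard splitting by $V_1=\{|w|\leq \tfrac{1}{\sqrt 2}\}$ and $V_2=\{|z|\leq \tfrac{1}{\sqrt 2}\}$, both of which are solid tori with cores $C_1,C_2$. Since $\psi$ preserves both $|z|$ and $|w|$, each $V_i$ is $Z$-invariant, giving (1). For (2), a direct computation shows $\psi$ acts on $C_1$ by multiplication by $\zeta^{p_1}$, so the induced rotation has order $a_1=n/\gcd(n,p_1)$, and the $Z$-isotropy of a point of $C_1$ is generated by $\psi^{a_1}$ and has order $\bar a_2=\gcd(n,p_1)=n/a_1$; symmetrically for $C_2$. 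The isotropy of a point $(z,w)$ with both $z,w\neq 0$ is trivial, because $\psi^k$ fixes such a point only when both $n\mid kp_1$ and $n\mid kp_2$, forcing $n\mid k$ by the coprimality condition $\gcd(n,p_1,p_2)=1$. This same observation gives $n=\mathrm{lcm}(a_1,a_2)$: we have $a_1,a_2\mid n$, while $\psi^{\mathrm{lcm}(a_1,a_2)}$ fixes both $C_1$ and $C_2$ pointwise, and the only such element of $SO(4)$ with this property is the identity. The identities $\bar a_1=a_1/\gcd(a_1,a_2)$, $\bar a_2=a_2/\gcd(a_1,a_2)$, and $\gcd(\bar a_1,\bar a_2)=1$ then follow from $\bar a_i=n/a_j$ and $n=\mathrm{lcm}(a_1,a_2)$.

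For (3), the decomposition of (1) descends to a genus one splitting $(V_1/Z)\cup (V_2/Z)$ of $|S^3/Z|$, whose underlying space is therefore a lens space. To compute $\pi_1(|S^3/Z|)$, I would use that this group is isomorphic to $Z/N$, where $N$ is the normal subgroup generated by all point stabilizers. By (2), $N=\langle \psi^{a_1},\psi^{a_2}\rangle=\langle \psi^{\gcd(a_1,a_2)}\rangle$, so $|Z/N|=\gcd(a_1,a_2)$. Finally, the ramification indices at points of $C_i/Z$ are read off from the orders of the stabilizers given in (2), and points of $|S^3/Z|$ outside $(C_1/Z)\cup (C_2/Z)$ are nonsingular, which pins down $\Sigma(S^3/Z)$.

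I do not expect a serious obstacle; the argument is essentially linear algebra plus careful bookkeeping of the indices $a_i,\bar a_i$. The only mildly delicate point is verifying $n=\mathrm{lcm}(a_1,a_2)$ together with the coprimality $\gcd(\bar a_1,\bar a_2)=1$, which hinges on the initial reduction $\gcd(n,p_1,p_2)=1$ and the fact that no nonidentity rotation of $S^3\subset\mathbb R^4$ fixes two disjoint great circles pointwise.
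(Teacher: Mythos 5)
Your proposal is correct and follows essentially the same route as the paper: simultaneous block-diagonalization of $\psi$ acting on $\mathbb{C}^2$, the standard $Z$-invariant genus one splitting $V_1=\{|w|\le 1/\sqrt2\}$, $V_2=\{|z|\le 1/\sqrt2\}$ with the Hopf link as cores, and a direct computation of isotropy subgroups. Your parametrization $\psi(z,w)=(\zeta^{p_1}z,\zeta^{p_2}w)$ with $\gcd(n,p_1,p_2)=1$ is equivalent to the paper's $(e^{2\pi i\alpha_1/a_1}z,e^{2\pi i\alpha_2/a_2}w)$ via $a_i=n/\gcd(n,p_i)$, and your verification of $n=\mathrm{lcm}(a_1,a_2)$ and the fundamental group computation via the normal closure of the stabilizers supply details the paper leaves to the reader.
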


\begin{proof} We can find two mutually orthogonal $2$-dimensional subspaces of $\mathbb{R}^4$ on which $\psi$ acts by rotation. Thus if we think of these subspaces as the two coordinate planes of $\mathbb C^2$,  $\psi$ has the form 
$$\psi(z, w) = (e^{{2\pi i \alpha_1}/a_1}z, e^{{2\pi i \alpha_2}/a_2}w)$$ 
where $\gcd(\alpha_1, a_1) = \gcd(\alpha_2, a_2) = 1$ and $n =
\hbox{lcm}(a_1,a_2)$. The subgroup of $Z$ which 
\vspace{-.1cm} 
\begin{itemize}
\item[\hspace{3mm} $\bullet$] fixes $(z,w)$ with $zw \ne 0$ is the trivial subgroup. 
\item[\hspace{3mm} $\bullet$] fixes the $z$-plane is generated by $\psi^{a_1}$ and has order $\bar a_2 = n/a_1 = a_2/\gcd(a_1,a_2).$ 
\item[\hspace{3mm} $\bullet$] fixes the $w$-plane is generated by $\psi^{a_2}$ and has order $\bar a_1 = n/a_2 = a_1/\gcd(a_1,a_2).$ 
\end{itemize}
\vspace{-.1cm}
The genus one Heegaard splitting of $S^3$ given by the two solid tori $V_1 = \{(z, w) : |z|^2 + |w|^2 = 1, |w| \leq 1/\sqrt{2}\}$ and $V_2 = \{(z, w) : |z|^2 + |w|^2 = 1, |z| \leq 1/\sqrt{2}\}$ is invariant under $Z$ and determines a genus one Heegaard splitting of $|S^3 / Z(K)|$. Further, the isotropy subgroup of a point $(z, w) \in S^3$ is trivial if $|zw| \ne 0$, $\mathbb Z / \bar a_2$ if $w = 0$, and $\mathbb Z / \bar a_1$ if $z = 0$. The conclusions of the lemma follow from these observations. 
\end{proof} 

\begin{corollary} \label{equivorbilens} 
A $3$-orbifold $\mathcal{L}$ is an orbi-lens space if and only if $|\mathcal{L}|$ is a lens space which admits a genus one Heegaard splitting $|\mathcal{L}| = V_1 \cup V_2$ such that $\Sigma(\mathcal{L})$ is a closed submanifold of the union of the cores $C_1, C_2$ of $V_1, V_2$, and there are coprime positive integers $b_1, b_2 \geq 1$ such that a point of $C_j$ has isotropy group $\mathbb Z / b_j$. In the latter case, $\pi_1(\mathcal{L}) \cong \mathbb Z / (b_1 b_2 |\pi_1(|\mathcal{L}|)|)$. 
\end{corollary}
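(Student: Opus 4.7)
The proof splits into the two implications. For $(\Rightarrow)$ I would simply unpack Lemma \ref{s3cyclicquotient}: setting $b_1:=\bar a_2$ and $b_2:=\bar a_1$ immediately yields the coprime isotropy orders, the genus-one Heegaard splitting $(V_1/Z)\cup(V_2/Z)$ of $|\mathcal{L}|$, and the singular locus on its cores; the identity $n=\lcm(a_1,a_2)=\bar a_1\bar a_2\gcd(a_1,a_2)$ then reads $|\pi_1(\mathcal{L})|=b_1b_2|\pi_1(|\mathcal{L}|)|$.

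For $(\Leftarrow)$ my plan is to invert Lemma \ref{s3cyclicquotient}. Put $k:=|\pi_1(|\mathcal{L}|)|$ and write $|\mathcal{L}|=L(k,p)$ for some $p$ coprime to $k$. Set $a_1:=b_2k$, $a_2:=b_1k$, so that $\gcd(a_1,a_2)=k$ (using $\gcd(b_1,b_2)=1$) and $n:=\lcm(a_1,a_2)=kb_1b_2$. For units $\alpha_j$ modulo $a_j$, consider $\phi(z,w)=(e^{2\pi i\alpha_1/a_1}z,\,e^{2\pi i\alpha_2/a_2}w)\in SO(4)$ and let $Z:=\langle\phi\rangle$. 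Lemma \ref{s3cyclicquotient} immediately gives that $Z$ is cyclic of order $n$, and that $S^3/Z$ has underlying space a lens space with $\pi_1$-order $k$, singular set on the cores of a genus-one Heegaard splitting, with $\bar a_2=b_1$ on one core and $\bar a_1=b_2$ on the other---matching the hypotheses on $\mathcal{L}$ except possibly the homeomorphism type of the underlying lens space.

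My remaining task is to choose $\alpha_1,\alpha_2$ so that $|S^3/Z|$ is homeomorphic to $L(k,p)$. Tracking the images of the meridians of $V_1,V_2$ on the quotient Heegaard torus writes $|S^3/Z|$ as $L(k,q)$ for an explicit residue $q\pmod k$ depending on $\alpha_1,\alpha_2$. As the $\alpha_j$ range over units modulo $a_j$, $q$ sweeps over all units modulo $k$, so I may arrange $q\equiv p$ up to the ambiguity $L(k,q)\cong L(k,\pm q^{\pm 1})$. Once $|S^3/Z|\cong|\mathcal{L}|$, Bonahon--Otal's uniqueness of the genus-one Heegaard splitting \cite{Bonotal} identifies the pair of cores of $|S^3/Z|$ with those of $|\mathcal{L}|$ up to ambient isotopy, and since the local model of a cyclic rotation on a normal disk is determined by its order, this promotes to an orbifold isomorphism $S^3/Z\cong\mathcal{L}$. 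The formula $\pi_1(\mathcal{L})\cong\zz/(b_1b_2k)$ is then just $|Z|=n$. The main obstacle is the lens-space matching: verifying that the explicit form of $q(\alpha_1,\alpha_2)\pmod k$ does realize every residue coprime to $k$, which requires some care when $\gcd(b_j,k)>1$ and the index-$b_1b_2$ subgroup of $Z$ does not act freely on $S^3$.
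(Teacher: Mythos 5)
Your forward direction is exactly the paper's: both read the statement off from Lemma \ref{s3cyclicquotient} by setting $b_1=\bar a_2$, $b_2=\bar a_1$. For the converse you take a genuinely different route. The paper works bottom-up: it asserts (leaving the verification to the reader) that $\mathcal{L}$ admits a $\mathbb{Z}/b_1b_2p$-fold cyclic cover by $S^3$ with deck group in $SO(4)$ --- in effect one covers each orbifold solid torus, glues, and recognizes the simply connected result as $S^3$ --- which sidesteps any need to identify \emph{which} lens space a given linear quotient is. You work top-down: write down a candidate linear action on $S^3$ and match the quotient to $\mathcal{L}$. That is workable, but it forces you to solve exactly the recognition problem you flag at the end, and you should close it rather than leave it as an ``obstacle.'' It does close: with $q'\equiv\alpha_2\alpha_1^{-1}\pmod k$ the residue of the quotient lens space, take $\alpha_1=1$ and seek $\alpha_2\equiv p\pmod k$ with $\gcd(\alpha_2,b_1k)=1$. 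Writing $\alpha_2=p+tk$, every prime $\ell$ dividing both $b_1$ and $k$ is automatically avoided because $\gcd(p,k)=1$, while for each prime $\ell\mid b_1$ with $\ell\nmid k$ only one residue of $t$ modulo $\ell$ is forbidden; the Chinese remainder theorem then produces a suitable $t$. So no sweep over all units is needed --- one explicit choice suffices --- and the case $\gcd(b_j,k)>1$ that worries you causes no trouble. Your final step (uniqueness of the genus one splitting \cite{Bonotal} plus the local model of a rotation on a normal disk) is fine, provided you note that you are free to assign $b_1,b_2$ to the two coordinate circles in whichever order makes the isotropy labels match the cores $C_1,C_2$ after the splitting-preserving isotopy. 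The net comparison: your construction is longer but makes the $SO(4)$-linearity of the deck group manifest by fiat, whereas the paper's one-line cover construction is shorter but leaves both the identification of the total space with $S^3$ and the linearity of the deck transformations to the reader.
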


\begin{proof} Lemma \ref{s3cyclicquotient} shows that an orbi-lens space has the form claimed in the corollary. Conversely, suppose that $\mathcal{L}$ is a $3$-orbifold for which $|\mathcal{L}| \cong L(p,q)$ admits a genus one Heegaard splitting $|\mathcal{L}| = V_1 \cup V_2$ such that $\Sigma(\mathcal{L})$ is a closed submanifold of the union of the cores $C_1, C_2$ of $V_1, V_2$, and there are coprime positive integers $b_1, b_2 \geq 1$ such that a point of $C_j$ has isotropy group $\mathbb Z / b_j$. It is straightforward to verify that there is a $\mathbb Z / b_1 b_2 p$-fold cyclic cover $S^3 \to \mathcal{L}$ whose deck transformations lie in $SO(4)$. Thus  $\mathcal{L}$ is an orbi-lens space. 
\end{proof}

We will use $\mathcal{L}(p,q; b_1, b_2)$ to denote the orbifold described in the corollary. As we are mainly concerned with the case $b_1 = 1$ and $b_2 = a$, we use $\mathcal{L}(p,q; a)$ to denote $\mathcal{L}(p,q; 1, a)$. When $a = 1$, $\mathcal{L}(p,q; a)$ is just $L(p,q)$.

\section{Proof of Theorem \ref{thm:cyclic}}  \label{3conj} 

We start by proving: 

\begin{proposition} \label{prop:cyclicdef}
Two hyperbolic knot complements have a finite-index cyclic cover if
and only if they have a finite-index cyclic quotient. Moreover, two
cyclically commensurable hyperbolic knot complements have the same
normalizers in $\PSL(2, \mathbb C)$. \end{proposition}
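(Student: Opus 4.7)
The plan is to realize both directions of the equivalence through a single structural fact: whenever $M_K$ and $M_{K'}$ are cyclically commensurable with common cyclic cover $\tilde{M}=\mathbf{H}^3/\tilde{\Gamma}$, the finite group $\langle\Gamma_K,\Gamma_{K'}\rangle/\tilde{\Gamma}$ is abelian. Once this is in hand, construction of the common cyclic quotient (and conversely) is routine, and the normalizer statement follows by combining this fact with Lemma~\ref{lem:slopes well-defined} applied to $M_K$.

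The easy direction (cyclic quotient $\Rightarrow$ cyclic cover) runs as follows. Suppose $\Gamma_K,\Gamma_{K'}$ are normal in a common $\Lambda:=\pi_1^{orb}(\mathcal{O})$ with cyclic quotients, and set $\tilde{\Gamma}:=\Gamma_K\cap\Gamma_{K'}$. Then $\tilde{\Gamma}\trianglelefteq\Lambda$, and the diagonal embedding $\Lambda/\tilde{\Gamma}\hookrightarrow(\Lambda/\Gamma_K)\times(\Lambda/\Gamma_{K'})$ shows that the deck group $\Gamma_K/\tilde{\Gamma}$ of $\tilde{M}\to M_K$ injects into the cyclic group $\Lambda/\Gamma_{K'}$ and is therefore itself cyclic, and symmetrically for $\tilde{M}\to M_{K'}$.

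For the reverse direction, let $\tilde{M}$ be the common cyclic cover and write $A:=\Gamma_K/\tilde{\Gamma}$, $B:=\Gamma_{K'}/\tilde{\Gamma}$ for the cyclic deck groups, regarded as subgroups of $\Isom^+(\tilde{M})=N^+(\tilde{\Gamma})/\tilde{\Gamma}$. The main point — and what I expect to be the most delicate step — is to show that $A$ and $B$ commute. This uses the knot-complement hypothesis $H_1(M_K)=\mathbb{Z}$ twice. First, the meridian of $M_K$ generates $H_1(M_K)$ and hence maps surjectively to the deck group $A=\mathbb{Z}/a$, so the single cusp of $M_K$ has a single connected preimage $\tilde{T}$ in $\tilde{M}$; symmetrically for $M_{K'}$. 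Second, because $\tilde{M}/A$ and $\tilde{M}/B$ are manifolds, $A$ and $B$ act freely on $\tilde{M}$, in particular freely on $\tilde{T}$; an orientation-preserving isometry of $T^2$ acts freely precisely when it is a translation, so the restrictions of $A$ and $B$ to $\tilde{T}$ lie in the abelian group of translations. Since a hyperbolic isometry fixing an open set is the identity, the restriction map $\Isom^+(\tilde{M})\to\Isom^+(\tilde{T})$ is injective, and therefore $A$ and $B$ commute in $\Isom^+(\tilde{M})$ itself. The group $G:=\langle A,B\rangle$ is then a finite abelian group containing $A$ and $B$ as normal subgroups, so $\mathcal{O}:=\tilde{M}/G$ is a common quotient with cyclic deck groups $G/A\cong B/(A\cap B)$ and $G/B$.

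For the moreover statement, I would first observe that $\tilde{\Gamma}$ is \emph{characteristic} in $\Gamma_K$: because $H_1(\Gamma_K)=\mathbb{Z}$ is torsion-free, $\tilde{\Gamma}$ is the unique index-$a$ normal subgroup with cyclic quotient, so every automorphism of $\Gamma_K$ preserves it. Hence $N^+(\Gamma_K)\subset N^+(\tilde{\Gamma})$, and symmetrically $N^+(\Gamma_{K'})\subset N^+(\tilde{\Gamma})$. It suffices to verify that every $g\in N^+(\Gamma_K)$ normalizes $B$ inside $\Isom^+(\tilde{M})$. Applying Lemma~\ref{lem:slopes well-defined} to the induced isometry of $M_K$, the action of $g$ on $H_1(\partial M_K)$ is by $\pm I$; restricting to the sub-lattice $\pi_1(\tilde{T})\subset\pi_1(\partial M_K)$, the induced action on $\pi_1(\tilde{T})$ is still $\pm I$. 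Now $B$ acts on $\tilde{T}$ as translations in the direction of the primitive class $b\mu_{K'}\in\pi_1(\tilde{T})$, which is therefore sent by $g$ to $\pm b\mu_{K'}$. A direct computation (conjugating an affine translation by an affine map multiplies its translation vector by the linear part) shows that conjugation by $g$ sends a generator of $B$ to its $\pm 1$-st power, so $g$ normalizes $B$ and hence $g\in N^+(\Gamma_{K'})$. Equality of the two normalizers follows by symmetry.
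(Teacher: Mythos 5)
Your proof is correct. The core of your hard direction coincides with the paper's: the common cyclic cover $\tilde M$ has a single cusp because $H_1(S^3\setminus K)\cong\mathbb Z$ forces the peripheral subgroup to surject onto the cyclic deck group, the two deck groups act freely on a horotorus cross-section and hence by Euclidean translations, and injectivity of restriction to the horotorus makes $\langle A,B\rangle$ abelian. (One cosmetic slip: the horotorus is not open in $\tilde M$, so ``an isometry fixing an open set is the identity'' is not quite the right justification for injectivity of restriction; what you want is that an orientation-preserving isometry fixing a horotorus pointwise has identity differential along it and is therefore the identity.) Where you genuinely diverge is in how the argument is closed off. The paper works with the full lifted symmetry groups $\widetilde{Z(K_j)}$ rather than just the deck groups, and deduces both the cyclicity of the quotient covers $S^3\setminus K_j\to \tilde M/A$ and the normalizer equality from the regularity/cyclicity results of Gonz{\'a}lez-Acu{\~n}a--Whitten and Reid's Lemma~4; that route yields the extra identification $\tilde M/A=\mathcal{Z}_{K_1}=\mathcal{Z}_{K_2}$, which the paper records for later use. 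You instead get cyclicity purely group-theoretically ($G/A\cong B/(A\cap B)$ is a quotient of the cyclic group $B$) and prove the ``moreover'' by hand: $\tilde\Gamma$ is characteristic in $\Gamma_K$ because $H_1\cong\mathbb Z$ makes the index-$a$ normal subgroup with cyclic quotient unique, and Lemma~\ref{lem:slopes well-defined} forces each element of $N^+(\Gamma_K)$ to conjugate the translation generating $B|_{\tilde T}$ to its $\pm1$ power. Your version is more elementary and self-contained, avoiding both external citations, at the cost of not directly identifying the common quotient with $\mathcal{Z}_K$.
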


\begin{proof}  The fact that a common finite-index cyclic quotient
implies a common finite-index cyclic cover is immediate from the
isomorphism theorems. Consider the converse then.

Suppose that $S^3 \setminus K_1 \cong {\bf H}^3/ \Gamma_1$ and $S^3
\setminus K_2 \cong {\bf H}^3 / \Gamma_2$ have a common finite-index
cyclic cover $M \cong {\bf H}^3/\Gamma_M$.  We may assume, after
conjugating, that $\Gamma_M \subseteq \Gamma_1 \cap \Gamma_2$.
Since knot complements have unique cyclic covers of a given order,
each isometry of $S^3 \setminus K_j$ is covered by an isometry of $M$
for $j = 1, 2$. Recall that the cyclic subgroup $Z(K) \subset \Isom^+(S^3 \setminus K)$ acts freely on $K$ and that the orbifold $\mathcal{Z}_{K}  = (S^3 \setminus K)/Z(K)$ has a torus cusp. 
 Let $\widetilde{Z(K_1)}$ be the subgroup of
$\hbox{Isom}^+(M)$ covering $Z(K_1)$ and define $\widetilde{Z(K_2)}$
similarly. By construction, $\widetilde{Z(K_1)}$ and
$\widetilde{Z(K_2)}$ act freely in the cusp of $M$ and since this cusp
is unique, the lift of each element of $\widetilde{Z(K_j)}$ to ${\bf
H}^3$ is parabolic ($j = 1, 2$). Thus $\widetilde{Z(K_1)}$ and
$\widetilde{Z(K_2)}$ act by Euclidean translations in each horotorus
of the cusp of $M$. It follows that $\widetilde{Z(K_1)}$ and
$\widetilde{Z(K_2)}$ generate an abelian subgroup $A$ of
$\hbox{Isom}^+(M)$ which acts by Euclidean translations in the horotori.
Thus there are regular covers $S^3 \setminus K_j \to \mathcal{O} =
M/A$ for $j = 1, 2$ where $\mathcal{O}$ has a torus cusp. These covers
are cyclic by \cite{GAW} and \cite[Lemma 4]{Re1} and so $S^3 \setminus
K_1$ and $S^3 \setminus K_2$ have a common cyclic quotient. Since the
covering group of $M \rightarrow S^3 \setminus K_i$ descends to
$Z(K_j)$ by the same argument, $M/A = \mathcal{Z}_{K_1} =
\mathcal{Z}_{K_2}$. Note, moreover, that there is a cover $S^3
\setminus K_1 \to (S^3 \setminus K_2)/\hbox{Isom}^+(S^3 \setminus K_2)
= {\bf H}^3/N^+(\Gamma_2)$ which is regular by \cite{GAW} and
\cite[Lemma 4]{Re1}. Thus $N^+(\Gamma_2) \subseteq N^+(\Gamma_1)$.
Similarly $N^+(\Gamma_1) \subseteq N^+(\Gamma_2)$, so these
normalizers are equal. This completes the proof.
\end{proof}

An immediate corollary is a strengthened  version of \cite [Thm2.2]{CaDu} for hyperbolic knots in $S^3$.

\begin{corollary} Two hyperbolic knot complements are cyclically commensurable if and only if they have a common regular finite cover with a single cusp.
\end{corollary}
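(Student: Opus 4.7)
The plan is to handle the forward direction directly and reduce the reverse direction to Proposition~\ref{prop:cyclicdef}. For the forward implication, suppose $K_1$ and $K_2$ have a common finite cyclic cover $M$. Each map $M \to S^3 \setminus K_i$ is regular, being cyclic; moreover its cyclic deck group factors through $H_1(S^3 \setminus K_i) \cong \mathbb{Z}$, in which the meridian is a generator. Since the meridian lies in the peripheral subgroup, that subgroup surjects onto the deck group, so the cusp of $S^3 \setminus K_i$ has a single preimage in $M$, and hence $M$ has exactly one cusp.

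For the reverse implication, let $M$ be a common regular finite cover of $S^3 \setminus K_1$ and $S^3 \setminus K_2$ with a single cusp, and let $G_i \subset \Isom^+(M)$ denote the deck group of $M \to S^3 \setminus K_i$. Since $S^3 \setminus K_i$ is a manifold with torus cusp, each $G_i$ acts freely on the unique cusp of $M$; its orientation-preserving restriction to any horotorus is therefore a finite group of Euclidean translations, and in particular $G_i$ is abelian. Every element of $G_1$ then commutes with every element of $G_2$ on the cusp. Since an isometry of a connected hyperbolic manifold is determined by its restriction to any open set, this commutation extends to all of $M$, so $A := \langle G_1, G_2 \rangle$ is a finite abelian subgroup of $\Isom^+(M)$ acting freely on the cusp.

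Set $\mathcal{O} := M/A$, an orbifold with torus cusp. Both complements $S^3 \setminus K_i = M/G_i$ cover $\mathcal{O}$ regularly with deck group $A/G_i$; by \cite{GAW} and \cite[Lemma~4]{Re1}, any regular cover of a torus-cusped orbifold by a knot complement is cyclic, so each $A/G_i$ is cyclic. Hence $K_1$ and $K_2$ admit a common finite cyclic quotient, and Proposition~\ref{prop:cyclicdef} upgrades this to a common finite cyclic cover, giving cyclic commensurability. The main obstacle is establishing the global commutativity of $G_1$ and $G_2$ on $M$: the single-cusp hypothesis does the essential work by forcing both groups to act as translations on one and the same horotorus, after which the rigidity of hyperbolic isometries propagates the commutation relation from the cusp to the rest of $M$.
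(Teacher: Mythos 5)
Your proof is correct and follows essentially the same route as the paper's: the forward direction via the meridian generating $H_1$, and the reverse direction by showing the group $A=\langle G_1,G_2\rangle$ acts by translations on the cusp so that $M/A$ has a torus cusp, then citing \cite{GAW} and \cite[Lemma 4]{Re1} to get cyclic quotients and invoking Proposition \ref{prop:cyclicdef}. The paper phrases the cusp step as $G_1,G_2$ acting trivially on $H_1(C;\mathbb{Z})$ rather than via commuting translations, but this is the same argument.
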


\begin{proof}
The forward implication is obvious, as a finite cyclic cover of a knot complement has one cusp. Let $K_1$ and $K_2$ two hyperbolic knots in $S^3$. Let $N$ be the common covering of their complements, with a single cusp $C$. Let $G_1$ and  $G_2$ be the two associated covering groups. Then the subgroup $G \subset Isom^+ (N)$ generated by $G_1$ and $G_2$ is finite and acts identically  on $H_1(C, \mathcal{Z})$, since $G_1$ and $G_2$ do. So the quotient orbifold $\mathcal{O} = N/G$ has a single torus cusp.  By \cite{GAW} and \cite[Lemma 4]{Re1} the coverings $S^3 \setminus K_1 \to \mathcal{O}$
and $S^3 \setminus K_2 \to \mathcal{O}$ are cyclic.  Hence Proposition \ref{prop:cyclicdef} shows that $S^3 \setminus K_1$ and $S^3 \setminus K_2$ are cyclically commensurable.

\end{proof}

The following result is a consequence of the fact that a knot complement has a unique $2$-fold covering.

\begin{lemma} \label{onetorus} 
Let $K$ be a hyperbolic knot. If $K$ is strongly invertible, $\mathcal{Z}_{K}$ is the unique $2$-fold covering of $\mathcal{O}(K) = {\bf H}^3/N^+(K)$ with a torus cusp, up to an orientation-preserving homeomorphism.
\end{lemma}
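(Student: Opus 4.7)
The plan is to classify the index-$2$ subgroups $H \leq \Pi := \pi_1(\mathcal{O}(K)) = N^+(K)$ whose associated cover ${\bf H}^3/H \to \mathcal{O}(K)$ has a torus cusp, and show the only one is $H_0 := \pi_1(\mathcal{Z}_K)$, the preimage of $Z(K)$ under the quotient $\Pi \to G := \Isom^+(S^3 \setminus K)$. Because $K$ is strongly invertible, $G$ is dihedral and the single cusp of $\mathcal{O}(K)$ has horospherical cross-section $S^2(2,2,2,2)$; let $P \leq \Pi$ be its peripheral subgroup. Inside $P$ there is a unique torsion-free index-$2$ subgroup $P_0 \cong \mathbb{Z}^2$, because the conjugation action of $P/P_0 \cong \mathbb{Z}/2$ on $P_0$ is by $-I$ (coming from the deck involution of the double cover of the pillowcase by the torus), so every element of $P \setminus P_0$ has order two. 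A connected $2$-fold cover ${\bf H}^3/H \to \mathcal{O}(K)$ therefore has torus cusp exactly when $H \cap P = P_0$, and one verifies directly that $H_0 \cap P = P_0$.

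I then case on whether $\Gamma_K \leq H$. In the first case $H$ is the preimage of an index-$2$ subgroup $G_1 := H/\Gamma_K \leq G$ and ${\bf H}^3/H = (S^3 \setminus K)/G_1$, whose cusp is the quotient of the cusp torus of $S^3 \setminus K$ by the $G_1$-action. This quotient is again a torus iff $G_1$ acts freely on the torus; by definition $Z(K)$ is the subgroup of $G$ acting freely on $K$, while any element of $G \setminus Z(K)$ reverses $K$ and induces an involution of the cusp torus with four fixed points. Hence $G_1 \subseteq Z(K)$, and matching indices forces $G_1 = Z(K)$, so $H = H_0$ and ${\bf H}^3/H = \mathcal{Z}_K$.

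In the second case $\Gamma_K \cap H$ is an index-$2$ subgroup of $\Gamma_K$. Since $H_1(S^3 \setminus K) \cong \mathbb{Z}$, $S^3 \setminus K$ admits a unique $2$-fold cover, so $\Gamma_K \cap H$ must equal the kernel of the mod-$2$ linking-number homomorphism $\Gamma_K \to \mathbb{Z}/2$. A meridian $\mu$ of $K$ has linking number $1$ with $K$, so $\mu \notin \Gamma_K \cap H$ and hence $\mu \notin H$. But $\mu$ is peripheral in $S^3 \setminus K$, so $\mu \in P \cap \Gamma_K$; since $P \cap \Gamma_K$ is torsion-free we have $P \cap \Gamma_K \subseteq P_0 = H \cap P \subseteq H$, whence $\mu \in H$, a contradiction. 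This rules out the second case.

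The delicate step is the set-up: identifying $P_0$ as the unique torsion-free index-$2$ subgroup of $P$ and recognising the torus-cusp condition as the group-theoretic equality $H \cap P = P_0$. Once these are in place the dichotomy is forced, and the unique-$2$-fold-cover property of $S^3 \setminus K$ promised by the hint kills the second case immediately.
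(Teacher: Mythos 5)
Your proof is correct and takes essentially the same approach as the paper's: both arguments classify the index-$2$ subgroups $H$ of $N^+(K)$ via the extension $1 \to \Gamma_K \to N^+(K) \to D_n \to 1$, split into the two cases $\Gamma_K \leq H$ and $\Gamma_K \not\leq H$, and dispose of the first case by noting that the quotient of the cusp torus is a torus only if the quotient group avoids the strong inversions, forcing $G_1 = Z(K)$. The only difference is in the second case: the paper lifts a strong inversion to the unique $2$-fold cyclic cover $M$ and observes that it acts by $-I$ on the cusp homology, so the quotient has an $S^2(2,2,2,2)$ cusp, whereas you derive the contradiction from the meridian, which must lie outside $H$ (mod-$2$ linking number) yet inside the torsion-free peripheral subgroup $P_0 \subseteq H$ — an equivalent and equally valid conclusion.
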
 

 \begin{proof}
Set $n = |Z(K)|$ and let $D_n$ denote the dihedral group of order $2n$. By hypothesis, $\Isom^+(S^3 \setminus K) \cong D_n$. We have the following exact sequence:
$$ 1 \rightarrow \pi_1(S^3 \setminus K) \rightarrow \pi_1(\mathcal{O}(K) ) \stackrel{\varphi}{\rightarrow} D_{n} \rightarrow 1 .$$
Let $\mathcal{O}'$ be a two-fold cover of $\mathcal{O}(K)$ with a torus cusp. Then $\pi_1(\mathcal{O}')$ is an index 2 subgroup of $\pi_1(\mathcal{O}(K) )$ whose image by $\varphi$ is a subgroup $G$ of $D_{n}$ of index $1$ or $2$. It is evident that $\hbox{ker}(\varphi|\pi_1(\mathcal{O}')) =  \pi_1(S^3 \setminus K) \cap \pi_1(\mathcal{O}')$ and
$[\pi_1(S^3 \setminus K) : \hbox{ker}(\varphi|\pi_1(\mathcal{O}'))] [D_n : G] = 2$.

If $G= D_n$, then $[\pi_1(S^3 \setminus K) : \hbox{ker}(\varphi|\pi_1(\mathcal{O}'))] = 2$, so $\hbox{ker}(\varphi|\pi_1(\mathcal{O}')) = \pi_1(M)$ where $M$ is the unique $2$-fold cyclic cover of $S^3 \setminus K$. There is a regular cover $M \to \mathcal{O}'$ of group $G = D_n$. Thus a strong inversion $\sigma \in \Isom^+(S^3 \setminus K)$ lifts to an involution $\tilde \sigma$ of $M$.
Since $M$ has one end, which is a torus cusp, it is easy to see that $\tilde \sigma$ acts on its first homology by multiplication by $-1$. But then $\mathcal{O}'$ has an $S^2(2,2,2,2)$ cusp, contrary to our hypotheses. 

Thus $[D_n : G] = 2$, so $|G| = |Z(K)|$ and $\ker(\varphi|\pi_1(\mathcal{O}')) = \pi_1(S^3 \setminus K)$. Therefore $S^3 \setminus K$ covers $\mathcal{O}'$ regularly with group $G \leq D_n = \Isom^+(S^3 \setminus K)$. Since $\mathcal{O}'$ has a torus cusp, $G$ acts freely on $K$, and so as $|G| = |Z(K)|$, $G = Z(K)$. Thus $\mathcal{O}' = (S^3 \setminus K)/Z(K) = \mathcal{Z}_K$, which is what we needed to prove. 
\end{proof}

\begin{remark}
{\rm The method of proof of the previous lemma yields the following stronger result: } Let $K$ be a hyperbolic knot and $S^3 \setminus K \to \mathcal{O}$ a cover where $\mathcal{O}$ is an orientable $3$-orbifold with an $S^2(2,2,2,2)$ cusp. Then there is a unique $2$-fold cover $\mathcal{O'} \to \mathcal{O}$ such that $\mathcal{O'}$ has a torus cusp. 
\end{remark}

The following proposition and Proposition \ref{prop:cyclicdef}   immediately implies assertion (1) of Theorem \ref{thm:cyclic}. \\

\begin{proposition} \label{prop:same} 
Two hyperbolic knots $K$ and $K'$ without hidden symmetries are commensurable if and only if there is an orientation-preserving homeomorphism between $\mathcal{Z}_{K}$ and $\mathcal{Z}_{K'}$. In particular  $K$ and $K'$ are commensurable if and only if they are cyclically commensurable.
\end{proposition}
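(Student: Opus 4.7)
My plan is to prove the two implications of the main equivalence separately, then deduce the ``in particular'' clause via Proposition \ref{prop:cyclicdef}. The reverse implication is easy: an orientation-preserving homeomorphism $\mathcal{Z}_K \cong \mathcal{Z}_{K'}$ exhibits both $S^3 \setminus K$ and $S^3 \setminus K'$ as finite covers of a common orbifold, so they are commensurable.

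For the forward implication, suppose $K, K'$ are commensurable and neither has hidden symmetries. Then $C^+(\Gamma_K)=N^+(\Gamma_K)$ and $C^+(\Gamma_{K'})=N^+(\Gamma_{K'})$, and the commensurability hypothesis means these two normalizers are conjugate in $\PSL(2,\mathbb{C})$. This yields an orientation-preserving isometry between the minimal orbifolds $\mathcal{O}_{\min}(K)$ and $\mathcal{O}_{\min}(K')$. I would then split into two cases based on the common cusp type of $\omin$. If the cusp is a torus, then by construction $\mathcal{Z}_K = \mathcal{O}_{\min}(K)$ and $\mathcal{Z}_{K'} = \mathcal{O}_{\min}(K')$, and the conclusion is immediate. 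If the cusp is $S^2(2,2,2,2)$, then both isometry groups $\Isom^+(S^3\setminus K)$ and $\Isom^+(S^3\setminus K')$ are dihedral, so both $K$ and $K'$ are strongly invertible, and each $\mathcal{Z}$ is a proper $2$-fold cover of the corresponding $\mathcal{O}_{\min}$. Lemma \ref{onetorus} says that this is the unique $2$-fold cover with a torus cusp, up to orientation-preserving homeomorphism, so the isometry of minimal orbifolds lifts to the required orientation-preserving homeomorphism $\mathcal{Z}_K \cong \mathcal{Z}_{K'}$.

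For the ``in particular'' clause, cyclic commensurability trivially implies commensurability. Conversely, once we have an orientation-preserving homeomorphism $\mathcal{Z}_K \cong \mathcal{Z}_{K'}$, both $S^3 \setminus K$ and $S^3 \setminus K'$ are regular covers of this common orbifold (which has a torus cusp) with cyclic covering groups $Z(K)$ and $Z(K')$ respectively. Thus they admit a common finite-index cyclic quotient, and Proposition \ref{prop:cyclicdef} promotes this to a common cyclic cover, so $K$ and $K'$ are cyclically commensurable.

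The main obstacle is the $S^2(2,2,2,2)$ case, where one must ensure that the identification of minimal orbifolds lifts canonically to an orientation-preserving homeomorphism of the relevant $2$-fold covers rather than to some other $2$-fold cover (the latter would have an $S^2(2,2,2,2)$ cusp and be useless for our purposes). This is exactly the content of Lemma \ref{onetorus}, and so its statement is what makes the argument go through.
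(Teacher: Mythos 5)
Your proposal is correct and follows essentially the same route as the paper: the same dichotomy (torus cusp of $\omin$ versus $S^2(2,2,2,2)$ cusp, equivalently $K$ not strongly invertible versus strongly invertible), the same appeal to Lemma \ref{onetorus} to identify $\mathcal{Z}_K$ and $\mathcal{Z}_{K'}$ as the unique torus-cusped $2$-fold covers of the common minimal orbifold, and the same use of Proposition \ref{prop:cyclicdef} to upgrade the common cyclic quotient to cyclic commensurability. No gaps.
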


\begin{proof} If there is an orientation-preserving homeomorphism between $\mathcal{Z}_{K}$ and $\mathcal{Z}_{K'}$ then clearly $K$ and $K'$ are commensurable, and in fact cyclically commensurable by Proposition \ref{prop:cyclicdef}. We prove the converse by distinguishing two cases:

 \noindent{\bf a) $K$ is not strongly invertible}. Then $\Isom^+(S^3 \setminus K) = Z(K)$ and since $K$ has no hidden symmetries, $\omin = (S^3 \setminus K)/Z(K) = \mathcal{Z}_{K}$. In particular $\omin$ has a torus cusp. Hence if $K'$ is commensurable with $K$, $K'$ is not strongly invertible. It follows that $\mathcal{Z}_{K'}$ is orientation-preserving homeomorphic to $\omin = \mathcal{Z}_{K}$.

\noindent{\bf b) $K$ is strongly invertible}. In this case $\omin = (S^3 \setminus K)/\hbox{Isom}^+(S^3 \setminus K)$ has a flexible cusp with horospherical section $S^2(2,2,2,2)$. Hence any knot $K'$ commensurable with $K$ is strongly invertible. The result follows from Lemma \ref{onetorus} as $\mathcal{Z}_{K}$ and $\mathcal{Z}_{K'}$ are $2$-fold coverings of
$\omin$ with torus cusps.

Now suppose that $K$ and $K'$ are commensurable knots without hidden symmetries. The proof shows that $S^3 \setminus K$ and $S^3 \setminus K'$ each cyclically cover $\mathcal{Z}_K$. Thus $K$ and $K'$ are cyclically commensurable by Proposition \ref{prop:cyclicdef}.  

\end{proof}

The following theorem is a main step in our study. It immediately implies Theorem \ref{thm:cyclic}. Recall that the meridinal slope of $S^3 \setminus K$ projects to a slope $r(K)$ in the torus cusp of $\mathcal{Z}_{K}  = (S^3 \setminus K)/Z(K)$ 

\begin{theorem} \label{th:slopes} 
Suppose that $K$ is a hyperbolic knot and let $K'$ be a knot cyclically commensurable with $K$. \\
$(1)$ There is an orientation-preserving homeomorphism between $\mathcal{Z}_{K}$ and $\mathcal{Z}_{K'}$. \\
$(2)$ If $r(K)$ and $r(K')$ coincide under some orientation-preserving homeomorphism between $\mathcal{Z}_{K}$ and $\mathcal{Z}_{K'}$, then $K$ and $K'$ are equivalent knots. \\
$(3)$ If $f_{K'}: \mathcal{Z}_{K'} \to \mathcal{Z}_{K}$ is a homeomorphism and $r_{K'}$ is the slope in the cusp of $\mathcal{Z}_{K}$ determined by $f_{K'}(r(K'))$, then $\Delta(r(K), r_{K'}) \leq1$. 
\end{theorem}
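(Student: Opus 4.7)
Part~(1) is an immediate consequence of Proposition~\ref{prop:cyclicdef} and its proof, which identifies the common cyclic quotient of two cyclically commensurable hyperbolic knot complements with $\mathcal{Z}_K$ and $\mathcal{Z}_{K'}$, thereby furnishing the orientation-preserving homeomorphism $f_{K'} : \mathcal{Z}_{K'} \to \mathcal{Z}_K$.

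For part~(2), assume $f_{K'}$ sends $r(K')$ to $r(K)$. By the description of orbi-lens spaces in \S\ref{orbi-lens}, the filling $\mathcal{Z}_K(r(K))$ is the orbi-lens space $\mathcal{L}_K = S^3/Z(K)$, with the core of the attached solid torus being the quotient knot $\bar K = K/Z(K)$; analogously $\mathcal{Z}_{K'}(r(K')) = \mathcal{L}_{K'}$ has core $\bar K'$. The map $f_{K'}$ therefore extends to an orientation-preserving homeomorphism $\mathcal{L}_{K'} \to \mathcal{L}_K$ sending $\bar K'$ to $\bar K$. Passing to the universal covers, both of which are $S^3$ and in which $K'$ and $K$ are the preimages of $\bar K'$ and $\bar K$ respectively, yields an orientation-preserving homeomorphism $(S^3, K') \to (S^3, K)$, so $K$ and $K'$ are equivalent knots.

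For part~(3), the two Dehn fillings $\mathcal{Z}_K(r(K)) = \mathcal{L}_K$ and $\mathcal{Z}_K(r_{K'}) = \mathcal{L}_{K'}$ of the hyperbolic orbifold $\mathcal{Z}_K$ are orbi-lens spaces and so have cyclic orbifold fundamental group. The bound $\Delta(r(K), r_{K'}) \le 1$ is then the content of an orbifold analogue of the Culler--Gordon--Luecke--Shalen Cyclic Surgery Theorem, which we obtain by reducing to the classical CGLS theorem via the cyclic manifold cover $S^3 \setminus K \to \mathcal{Z}_K$ of degree $|Z(K)|$: the preimage of $r(K)$ is the meridian $\mu_K$ with filling $S^3$, while the preimage slope $\alpha$ of $r_{K'}$ produces a filling $(S^3\setminus K)(\alpha)$ that cyclically covers $\mathcal{L}_{K'}$ (hence has cyclic fundamental group) when the preimage of $r_{K'}$ is connected. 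The classical CGLS theorem then gives $\Delta(\mu_K, \alpha) \le 1$, and the elementary distance formula for a covering of cusp tori translates this to $\Delta(r(K), r_{K'}) \le 1$. The principal technical hurdle is precisely the case when the preimage of $r_{K'}$ is disconnected, which one handles either through the symmetric lift along $S^3 \setminus K' \to \mathcal{Z}_{K'}$ or by a direct character-variety argument adapted to orbifolds.
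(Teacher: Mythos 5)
Parts (1) and (2) of your proposal follow the paper's own route: (1) is extracted from the proof of Proposition \ref{prop:cyclicdef} (the identification $M/A = \mathcal{Z}_{K}=\mathcal{Z}_{K'}$), and (2) is exactly the paper's Lemma \ref{lem:slope} (extend the homeomorphism over the fillings $\mathcal{Z}_K(r(K))=S^3/Z(K)$ and $\mathcal{Z}_{K'}(r(K'))=S^3/Z(K')$ and lift to the universal covers). These are fine.

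Part (3) is where you diverge from the paper, and your argument has a genuine gap that you name but do not close, and which is not a corner case but the crux. Write $n=|Z(K)|$ and let $p:S^3\setminus K\to\mathcal{Z}_K$ be the cyclic (orbifold) cover. On the cusp, the meridian $\mu_K$ maps \emph{homeomorphically} onto a representative of $r(K)$ (the core $K$ covers $\bar K$ with degree $n$, so the meridian disk maps degree one), while a component $\alpha$ of the preimage of $r_{K'}$ covers $r_{K'}$ with some degree $k$ dividing $n$. The intersection-form computation for a degree-$n$ torus cover then gives $\Delta(r(K),r_{K'})=\tfrac{n}{k}\,\Delta(\mu_K,\alpha)$, so the CGLS bound $\Delta(\mu_K,\alpha)\le 1$ upstairs yields $\Delta(r(K),r_{K'})\le 1$ \emph{only when} $k=n$, i.e.\ only when the preimage of $r_{K'}$ is connected. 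Worse, in the disconnected case the hypothesis of CGLS also fails: the deck group extends over the filling solid torus of $(S^3\setminus K)(\alpha)$ with an orbifold quotient whose core carries an extra cone angle of order $n/k$, so $(S^3\setminus K)(\alpha)$ covers $\mathcal{L}_{K'}$ \emph{with added singularity along $\bar K'$}, which need not be spherical, and you cannot conclude that $\pi_1((S^3\setminus K)(\alpha))$ is cyclic. Connectedness of the preimage is equivalent (in a suitable basis) to $\gcd(\Delta(r(K),r_{K'}),n)=1$, so assuming it is essentially assuming the conclusion; neither of your two suggested fixes repairs this — the ``symmetric lift'' along $S^3\setminus K'\to\mathcal{Z}_{K'}$ faces the identical obstruction for the slope coming from $r(K)$, and ``a character-variety argument adapted to orbifolds'' is precisely the missing theorem, not a known tool you can cite.

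The paper avoids all of this by staying downstairs: it drills out the singular locus to form the hyperbolic manifold $\mathcal{Z}_K^0=\mathcal{Z}_K\setminus N(\Sigma(\mathcal{Z}_K))$, observes that $\mathcal{Z}_K^0(r(K))$ and $\mathcal{Z}_K^0(r_{K'})$ are each a lens space, a solid torus, or $T^2\times[0,1]$ (according to $|\Sigma(\mathcal{Z}_K)|\in\{0,1,2\}$), then Dehn fills the boundary components coming from $\Sigma(\mathcal{Z}_K)$ along generic slopes so that the result $M$ is a hyperbolic manifold with one cusp whose two fillings $M(r(K))$ and $M(r_{K'})$ have cyclic fundamental group; the classical cyclic surgery theorem applied to $M$ gives $\Delta(r(K),r_{K'})\le 1$ directly. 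You should replace your covering-space reduction in (3) with an argument of this type, or else supply an actual proof that the relevant preimage slope is connected.
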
  

\begin{proof}[Proof of Theorem \ref{thm:cyclic}]
By Assertion (1) of Theorem \ref{th:slopes} we can fix an orientation-preserving homeomorphism $f_{K'}: \mathcal{Z}_{K'} \to \mathcal{Z}_{K}$ for each $K' \in \mathcal{CC}(K)$. Let $r_{K'}$ be the slope in the cusp of $\mathcal{Z}_{K}$ determined by $f_{K'}(r(K'))$. Assertion (3) implies that there are at most three slopes in the set $\{r_{K'}: K' \in \mathcal{CC}(K)\}$, while assertion (2) implies that the function which associates the slope $r_{K'}$ to $K' \in \mathcal{C}(K)$ is injective. Thus Theorem \ref{thm:cyclic} holds. 
\end{proof}

Assertion (1) of Theorem \ref{th:slopes} is the content of the following proposition:

\begin{proposition} \label{prop:same2} 
Two hyperbolic knots $K$ and $K'$  are cyclically commensurable if and only if there is an orientation-preserving homeomorphism between $\mathcal{Z}_{K}$ and $\mathcal{Z}_{K'}$.
\end{proposition}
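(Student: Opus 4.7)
The plan is to extract the required orientation-preserving homeomorphism directly from the construction already carried out in the proof of Proposition \ref{prop:cyclicdef}, so the statement should essentially fall out as a corollary without any new ingredient.

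For the easy direction, I would suppose that an orientation-preserving homeomorphism $h\colon \mathcal{Z}_K \to \mathcal{Z}_{K'}$ is given and use $h$ to identify the two orbifolds. Then the covers $S^3\setminus K \to \mathcal{Z}_K$ and $S^3\setminus K' \to \mathcal{Z}_{K'}$ (both cyclic, with deck groups $Z(K)$ and $Z(K')$ respectively) exhibit a common finite-index cyclic quotient of the two knot complements. Proposition \ref{prop:cyclicdef} then promotes this to a common finite-index cyclic cover, so $K$ and $K'$ are cyclically commensurable.

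For the converse, assume $K$ and $K'$ are cyclically commensurable, and fix a common finite cyclic cover $M \cong {\bf H}^3/\Gamma_M$ with $\Gamma_M \subseteq \Gamma_K \cap \Gamma_{K'}$ (after conjugating). I would then invoke the core observation used inside the proof of Proposition \ref{prop:cyclicdef}: the lifts $\widetilde{Z(K)}, \widetilde{Z(K')} \subseteq \text{Isom}^+(M)$ each act freely and parabolically on the unique cusp of $M$, hence by Euclidean translations on every horotorus, so they commute and together generate a finite abelian subgroup $A \subseteq \text{Isom}^+(M)$. The proof of Proposition \ref{prop:cyclicdef} then shows that the quotient orbifold $M/A$ is identified with each of $\mathcal{Z}_K$ and $\mathcal{Z}_{K'}$ via orientation-preserving homeomorphisms (because the deck group of $M \to S^3 \setminus K_j$ descends to $Z(K_j)$). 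Composing these two identifications produces the required orientation-preserving homeomorphism $\mathcal{Z}_K \to \mathcal{Z}_{K'}$.

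There is no genuine obstacle to carry out: the entire content of the proposition is already packaged inside Proposition \ref{prop:cyclicdef}, and the main thing I need to be careful about is orientations, namely that all covers and quotients in the argument are taken in the category of oriented orbifolds, which is guaranteed by our standing convention that covering maps preserve orientation and by the fact that the $Z(K_j)$ act by orientation-preserving isometries. It is worth emphasizing that, unlike Proposition \ref{prop:same}, no appeal to Lemma \ref{onetorus} or to the absence of hidden symmetries is needed here, precisely because $\mathcal{Z}_K$ and $\mathcal{Z}_{K'}$ are produced as the same orbifold quotient $M/A$ of a single common cyclic cover rather than recovered separately as $2$-fold covers of the minimal orbifold in the commensurability class.
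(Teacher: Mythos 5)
Your proof is correct, and both directions ultimately rest on Proposition \ref{prop:cyclicdef}, but your converse takes a somewhat different (and shorter) route than the paper's. The paper invokes only the \emph{conclusion} of Proposition \ref{prop:cyclicdef} --- that cyclically commensurable knot complements have equal normalizers, hence orientation-preserving homeomorphic quotients $\mathcal{O}(K) \cong \mathcal{O}(K')$ --- and then reruns the two-case argument of Proposition \ref{prop:same} with $\mathcal{O}(K)$ in place of $\mathcal{O}_{\text{min}}(K)$: if $K$ is not strongly invertible then $\mathcal{O}(K) = \mathcal{Z}_K$ outright, while if $K$ is strongly invertible one identifies $\mathcal{Z}_K$ and $\mathcal{Z}_{K'}$ as the unique $2$-fold covers of $\mathcal{O}(K)$ with a torus cusp via Lemma \ref{onetorus}. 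You instead extract the intermediate identity $M/A = \mathcal{Z}_{K} = \mathcal{Z}_{K'}$ already established inside the proof of Proposition \ref{prop:cyclicdef} (where $A$ is the abelian group generated by the lifts of $Z(K)$ and $Z(K')$ to the common cyclic cover $M$), which produces the homeomorphism in one stroke and dispenses with both the strong-invertibility dichotomy and Lemma \ref{onetorus}. The difference is organizational rather than logical: the identification $M/A = \mathcal{Z}_{K_j}$ itself depends on the regularity and cyclicity of the covers $S^3 \setminus K_j \to M/A$ supplied by \cite{GAW} and \cite[Lemma 4]{Re1}, which is the same external input the paper's route consumes; and your remark that orientations are controlled because the whole argument takes place among subgroups of $\PSL(2,\mathbb{C})$ after a single conjugation is exactly the right point to make explicit.
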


\begin{proof}
By Proposition \ref{prop:cyclicdef} if the hyperbolic knots $K$ and $K'$  are cyclically commensurable then there is an orientation-preserving homeomorphism between the orbifolds $\mathcal{O}(K) = {\bf H}^3/N^+(K)$ and  $\mathcal{O}(K') = {\bf H}^3/N^+(K')$. Then the proof is the same as the proof of Proposition \ref{prop:same} by considering
$\mathcal{O}(K)$ instead of $\omin$.
\end{proof}

Assertion (2) of Theorem \ref{th:slopes} is given by the following lemma:

\begin{lemma}\label{lem:slope} Let $K$ and $K'$ be two hyperbolic cyclically commensurable knots. If $r(K)$ and $r(K')$ coincide under some orientation-preserving homeomorphism between $\mathcal{Z}_{K}$ and $\mathcal{Z}_{K'}$, then $K$ and $K'$ are equivalent knots. 
\end{lemma}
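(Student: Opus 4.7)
My plan is to show that the homeomorphism $f$ extends, through the Dehn fillings, to an orbi-lens space homeomorphism, and then lifts to an orientation-preserving homeomorphism of $S^3$ carrying $K'$ to $K$.

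First I would identify $\mathcal{Z}_K(r(K))$ with the orbi-lens space $\mathcal{L}_K := S^3/Z(K)$. Because $Z(K)$ acts freely on $K$ by definition, any rotation axis of $Z(K)$ in $S^3$ is disjoint from $K$; Corollary \ref{equivorbilens} then makes $\mathcal{L}_K$ an orbi-lens space in which $\bar K := K/Z(K)$ is a knot disjoint from the singular locus, and $\mathcal{L}_K \setminus \bar K = \mathcal{Z}_K$. A meridinal disk for $K$, chosen small enough that its $Z(K)$-orbit consists of pairwise disjoint disks, descends to an embedded meridian disk for $\bar K$ in $\mathcal{L}_K$; hence $r(K)$ is the meridian slope of $\bar K$, and $\mathcal{Z}_K(r(K)) = \mathcal{L}_K$ with $\bar K$ the core of the filling solid torus. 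The analogous construction applied to $K'$ yields $\mathcal{L}_{K'}$ and $\bar K'$.

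Since $f\colon \mathcal{Z}_{K'}\to \mathcal{Z}_K$ is an orientation-preserving orbifold homeomorphism sending $r(K')$ to $r(K)$, it extends across the Dehn fillings to an orientation-preserving homeomorphism $\bar f\colon \mathcal{L}_{K'}\to \mathcal{L}_K$; after composing the extension on the filling solid torus with a suitable self-homeomorphism of that solid torus we may arrange $\bar f(\bar K')=\bar K$. By Lemma \ref{s3cyclicquotient}, $p_K\colon S^3\to\mathcal{L}_K$ and $p_{K'}\colon S^3\to\mathcal{L}_{K'}$ are orbifold universal coverings. Simple-connectedness of $S^3$ allows $\bar f\circ p_{K'}$ to be lifted through $p_K$ to an orientation-preserving homeomorphism $\tilde f\colon S^3\to S^3$ with $p_K\circ\tilde f=\bar f\circ p_{K'}$. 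Pulling back the equality $\bar f(\bar K')=\bar K$ through these covers, using $K=p_K^{-1}(\bar K)$ and $K'=p_{K'}^{-1}(\bar K')$, yields $\tilde f(K')=K$, so $K$ and $K'$ are equivalent.

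The one step that warrants care is the identification $\mathcal{Z}_K(r(K))=\mathcal{L}_K$, as this is what converts the slope hypothesis $f(r(K'))=r(K)$ into a statement about orbi-lens space homeomorphisms to which orbifold covering theory can be applied. Everything afterwards is a formal consequence of the universal orbifold cover of an orbi-lens space being $S^3$.
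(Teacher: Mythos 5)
Your proof is correct and follows essentially the same route as the paper: identify the filled orbifolds $\mathcal{Z}_K(r(K))$ and $\mathcal{Z}_{K'}(r(K'))$ with $S^3/Z(K)$ and $S^3/Z(K')$, extend the homeomorphism across the fillings, and lift through the universal covers to an orientation-preserving self-homeomorphism of $S^3$. The only cosmetic difference is that you arrange the extension to carry core to core and so get $\tilde f(K')=K$ directly, whereas the paper only records $\tilde f(S^3\setminus K)=S^3\setminus K'$ and then invokes Gordon--Luecke.
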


\begin{proof} Suppose that $r(K)$ and $r(K')$ coincide under some homeomorphism $\mathcal{Z}_{K} \to \mathcal{Z}_{K'}$. Then we have an induced homeomorphism $f: (\mathcal{Z}_K(r(K)), \mathcal{Z}_K) \to (\mathcal{Z}_{K'}(r(K')), \mathcal{Z}_{K'})$. 
By construction, $\mathcal{Z}_K(r(K)) \cong S^3/Z(K)$ so 
$$\pi: S^3 \to S^3/Z(K) = \mathcal{Z}_K(r(K))$$ 
is a universal cover. In the same way 
$$\pi': S^3 \to S^3/Z(K') = \mathcal{Z}_{K'}(r(K'))$$ 
is a universal cover. 
Since universal covers are unique up to covering equivalence, there is a homeomorphism (preserving orientation) $\tilde f: S^3 \to S^3$ such that 
$\pi' \circ \tilde f = f \circ \pi$. In particular, 
$$\tilde f(S^3 \setminus K) = \tilde f(\pi^{-1}(\mathcal{Z}_K)) = \pi'^{-1}(f(\mathcal{Z}_K))) = \pi'^{-1}(\mathcal{Z}_{K'}) = S^3 \setminus K' .$$
Thus the complement of $K$ is orientation-preserving homeomorphic to the complement of $K'$, so $K$ is equivalent to $K'$ \cite{GL}.
\end{proof}

With the notations of Lemma \ref{lem:slope}, Assertion (3) of Theorem \ref{th:slopes} is the content of the following lemma:
    
\begin{lemma}\label{lem:intersection} Let $K$ and $K'$ be two cyclically commensurable knots and $f: \mathcal{Z}_{K'} \to \mathcal{Z}_{K}$ a homeomorphism\footnote{We do not assume that $f$ preserves orientation.}. Then $\Delta(r(K), r_{K'}) \leq1$ where $r_{K'}$ is the slope in the cusp of $\mathcal{Z}_K$ corresponding to $f(r(K'))$. 
\end{lemma}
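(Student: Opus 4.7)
The plan is to lift both Dehn fillings to the cyclic manifold cover $p : S^3 \setminus K \to \mathcal{Z}_K$ with deck group $Z(K)$ (of order $n := |Z(K)|$), and then to apply the Cyclic Surgery Theorem of Culler--Gordon--Luecke--Shalen to the hyperbolic manifold $S^3 \setminus K$. Because every orientation-preserving symmetry of $(S^3,K)$ sends a meridian of $K$ to a meridian, $Z(K)$ acts on $T := \partial(S^3 \setminus K)$ by translations along the meridional direction. Consequently the meridian $\mu$ is the unique preimage of $r(K)$ under $p$, covering $r(K)$ with degree $n$, while the preimage $\tilde r$ of $r_{K'}$ is a slope on $T$ whose $c$ parallel components each cover $r_{K'}$ with degree $d := n/c \geq 1$.

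Since $Z(K)$ preserves the isotopy class $\tilde r$ and acts on $T$ by translations, the action extends over the solid torus attached in the Dehn filling, and $p$ extends to a degree-$n$ orbifold covering
\[
p' : (S^3 \setminus K)(\tilde r) \longrightarrow \mathcal{Z}_K(r_{K'}).
\]
By hypothesis $\mathcal{Z}_K(r_{K'}) \cong \mathcal{Z}_{K'}(r(K')) = S^3/Z(K')$, an orbi-lens space whose orbifold fundamental group is finite cyclic by Corollary~\ref{equivorbilens}. Hence $\pi_1\bigl((S^3 \setminus K)(\tilde r)\bigr)$, as a subgroup of a cyclic group, is itself cyclic; meanwhile $(S^3 \setminus K)(\mu) = S^3$ is simply connected. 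The Cyclic Surgery Theorem applied to the hyperbolic manifold $S^3 \setminus K$ therefore yields $\Delta(\mu, \tilde r) \leq 1$.

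To finish, we compare the upstairs and downstairs distances via a computation on the cover of boundary tori. Modelling $T = \mathbb{R}^2/\mathbb{Z}^2$ with $\mu$ the class of $(1,0)$ and $Z(K)$ acting by $(x,y)\mapsto(x+1/n,y)$, one sees that if $r_{K'}$ has primitive class $(a,b)$ in the induced basis of $H_1(T/Z(K))$, then $\Delta(r(K),r_{K'}) = |b|$, the primitive class of a component of $\tilde r$ in $H_1(T)$ is $\bigl(a/\gcd(a,n),\; bn/\gcd(a,n)\bigr)$, and consequently $\Delta(\mu,\tilde r) = d\,\Delta(r(K), r_{K'})$ with $d = n/\gcd(a,n) \geq 1$. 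Combining this identity with the CGLS bound gives $\Delta(r(K), r_{K'}) \leq \Delta(\mu,\tilde r) \leq 1$. The key technical point is ensuring that the Dehn filling upstairs yields a manifold with cyclic fundamental group covering $\mathcal{Z}_K(r_{K'})$; this uses crucially that $Z(K)$ acts on $T$ by translations along the meridional direction, allowing the cover to extend through the filling.
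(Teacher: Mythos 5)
There is a genuine error at the heart of your argument: the description of how $Z(K)$ acts on the peripheral torus $T$. By definition $Z(K)$ acts \emph{freely on $K$}, so a generator restricts to a free rotation of the core circle; on $T$ it therefore acts as a translation whose \emph{longitudinal} component has order exactly $n=|Z(K)|$ (a purely meridional translation would fix $K$ pointwise, forcing $K$ to be an axis of rotation and hence unknotted). Consequently your covering data is backwards: the meridian $\mu$ maps with degree $1$ onto a representative of $r(K)$ (so $r(K)$ is already primitive downstairs and its preimage is $n$ parallel meridians), rather than $\mu$ being a single degree-$n$ lift. Redoing your lattice computation with the correct action (take $H_1(T)=\mathbb{Z}\mu\oplus\mathbb{Z}\lambda\subset H_1(T/Z(K))$ of index $n$, with $\mu$ primitive downstairs and $\lambda=n\bar\lambda$) gives
$$\Delta(\mu,\tilde r)\;=\;\frac{\Delta(r(K),r_{K'})}{\gcd\bigl(\Delta(r(K),r_{K'}),\,n\bigr)}\;\leq\;\Delta(r(K),r_{K'}),$$
i.e.\ distances can only \emph{decrease} when pulled back to the cover. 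Hence the CGLS bound $\Delta(\mu,\tilde r)\leq 1$ upstairs only shows that $\Delta(r(K),r_{K'})$ divides $n$, which is far from the desired bound; for instance $\Delta(r(K),r_{K'})=n$ is perfectly consistent with everything you prove. (A secondary gap: the quotient of $(S^3\setminus K)(\tilde r)$ by the extended $Z(K)$-action is an orbifold filling of $\mathcal{Z}_K$ along $r_{K'}$ whose core may be singular, so it need not equal $\mathcal{Z}_K(r_{K'})=S^3/Z(K')$, and the cyclicity of $\pi_1\bigl((S^3\setminus K)(\tilde r)\bigr)$ is not fully justified either.)

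The fix is to apply the Cyclic Surgery Theorem \emph{downstairs}, where the two slopes actually live, and this is what the paper does: remove a tubular neighborhood of the singular locus $\Sigma(\mathcal{Z}_K)$ to get a hyperbolic manifold $\mathcal{Z}_K^0$ with the same torus cusp as $\mathcal{Z}_K$; observe that $\mathcal{Z}_K^0(r(K))$ and $\mathcal{Z}_K^0(r_{K'})$ are a lens space, a solid torus, or $T^2\times I$ (since both orbifold fillings of $\mathcal{Z}_K$ are orbi-lens spaces with singular set contained in the cores of a genus one splitting); then fill the extra boundary tori of $\mathcal{Z}_K^0$ along generic slopes to obtain a hyperbolic manifold $M$ for which $M(r(K))$ and $M(r_{K'})$ have cyclic fundamental group. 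Since $M$ and $\mathcal{Z}_K$ share the same cusp, CGLS applied to $M$ bounds exactly the distance $\Delta(r(K),r_{K'})$ you need.
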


\begin{proof}

Set 
$$\mathcal{Z}_K^0  = \mathcal{Z}_K \setminus N(\Sigma(\mathcal{Z}_K))$$ where $N(\Sigma(\mathcal{Z}_K))$ denotes a small, open tubular neighborhood of $\Sigma(\mathcal{Z}_K)$. Then $\mathcal{Z}_K^0$ has no singularities. Since $\Sigma(\mathcal{Z}_K)$ is a geodesic link in the hyperbolic orbifold $\mathcal{Z}_K$, $\mathcal{Z}_K^0$ admits a complete, finite volume, hyperbolic structure \cite{knot, link}. 

By the geometrization of finite group actions \cite{orbifoldthm, Morgan}, we can suppose that $Z(K)$ and $Z(K')$ act orthogonally on $S^3$. It follows that both Dehn fillings of  the torus cusp of $\mathcal{Z}_K$ along the slopes $r(K)$ and $r_{K'}$ give orbi-lens spaces $\mathcal{L}_K = \mathcal{Z}_K(r(K)) = S^3/Z(K)$ and $\mathcal{L}' = \mathcal{Z}_K(r_{K'}) \cong \mathcal{Z}_{K'}(r(K')) = S^3/Z(K')$. By Corollary \ref{equivorbilens}, $|\mathcal{L}_K|$  and $|\mathcal{L}'|$ are  lens spaces, possibly $S^3$. Moreover  
the singular set $\Sigma(\mathcal{L}_K)$, resp. $\Sigma(\mathcal{L} ')$, is either empty or a sublink of 
the union of the cores of the two solid tori in a genus 1 Heegaard splitting of $|\mathcal{L}_K|$, resp. $|\mathcal{L}'|$. 
Since $\mathcal{Z}_K^0(r(K)) = \mathcal{L}_K \setminus N(\Sigma(\mathcal{Z}_K)) 
= \mathcal{L}_K \setminus N(\Sigma(\mathcal{L}_K ))$, we have:

$$\mathcal{Z}_K^0(r(K))  \cong \left\{ 
\begin{array}{ll}
|\mathcal{L}_{K}| &  \hbox{ if } \vert \Sigma(\mathcal{Z}_K) \vert = 0 \\ 
S^1 \times D^2 &  \hbox{ if } \vert \Sigma(\mathcal{Z}_K) \vert = 1 \\
S^1 \times S^1 \times [0,1] & \hbox{ if } \vert \Sigma(\mathcal{Z}_K) \vert = 2
\end{array} \right. 
$$

In the same way:

$$\mathcal{Z}_K^0(r_{K'})  \cong \left\{ 
\begin{array}{ll}
|\mathcal{L}'| &  \hbox{ if } \vert \Sigma(\mathcal{Z}_K) \vert = 0 \\ 
S^1 \times D^2 &  \hbox{ if } \vert \Sigma(\mathcal{Z}_K) \vert = 1 \\
S^1 \times S^1 \times [0,1] & \hbox{ if } \vert \Sigma(\mathcal{Z}_K) \vert = 2
\end{array} \right. 
$$

 One can choose slopes on the components $\partial N(\Sigma(\mathcal{Z})) \subset \partial \mathcal{Z}_K^0$ such that $M$, the manifold obtained by Dehn filling $\mathcal{Z}_K^0$ along these slopes, is hyperbolic. It follows from above that $M(r(K))$ and $M(r')$ have cyclic fundamental groups, so the cyclic surgery theorem \cite{CGLS} implies that $\Delta(r(K), r_{K'}) \leq 1$.
 \end{proof}

This completes the proof of Theorem \ref{th:slopes}, and therefore of Theorem \ref{thm:cyclic}. 

We have the following consequence of the proof.  A good orbifold is an orbifold which is covered by a manifold.  

\begin{scholium} Let $M$ be a hyperbolic orbifold with a single torus cusp.  If $M(r_1)$ and $M(r_2)$ yield good orbifolds with cyclic orbifold fundamental group, then $\Delta(r_1,r_2) \leq 1$.  In particular, there are at most 3 such slopes.
\end{scholium} 

\begin{proof}

Suppose that the group $\pi_1^{\orb}(M(r_1))$ is finite cyclic.  Then the universal cover is $S^3$ and $M$ is the complement of a knot in an orbi-lens space, and the result follows from the proof of Lemma \ref{lem:intersection}.

Suppose $\pi_1^\orb(M(r_	1))$ is infinite cyclic.  Since its universal cover is a manifold and its fundamental group has no torsion, $M(r_1)$ is a manifold and hence $M$ is a hyperbolic manifold. The result follows from the Cyclic Surgery Theorem \cite{CGLS}. 
\end{proof}

The analysis of the action on the knot complement by a cyclic group of symmetries as in Lemma \ref{lem:intersection} above along with an observation of M. Kapovich yields the following characterisation of the minimal element in the commensurability class of a knot complement. 

\begin{corollary} \label{ball} If $\mathcal{O}_{\text{min}}(K)$ is the minimal element of a non-arithmetic commensurability class which contains a knot complement $S^3 \setminus K$ then the underlying space of $\mathcal{O}_{\text{min}}(K)$ is either an open ball or the complement of a knot in a lens space.  \end{corollary}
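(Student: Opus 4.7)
My plan is to split the argument according to the cusp type of $\omin$, either flexible (a torus or $S^2(2,2,2,2)$) or rigid (a Euclidean turnover), and then handle these two cases by quite different methods.

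In the flexible case I would reuse the Dehn-filling analysis from the proof of Lemma \ref{lem:intersection}. If the cusp of $\omin$ is a torus then the proof of Proposition \ref{prop:same} (case (a)) gives $\omin=\mathcal{Z}_K$, so Lemma \ref{lem:intersection} applies directly: the singular set $\Sigma(\omin)$ is a geodesic link, its complement is hyperbolic, and filling $\omin$ along the meridinal slope $r(K)$ yields the orbi-lens space $\mathcal{L}_K=S^3/Z(K)$, whose underlying space $|\mathcal{L}_K|$ is a lens space by Corollary \ref{equivorbilens}. Hence $|\omin|$ is the complement in $|\mathcal{L}_K|$ of the image of the core of the filling solid torus. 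If the cusp is $S^2(2,2,2,2)$, then by Lemma \ref{onetorus} there is a $2$-fold orbifold cover $\mathcal{Z}_K\to\omin$ whose covering involution $\sigma$ is a strong inversion of $K$; the previous case applied to $\mathcal{Z}_K$ realizes $|\mathcal{Z}_K|$ as the complement of a knot in a lens space $|\mathcal{L}_K|$, and since the closed spherical $3$-orbifold $\mathcal{L}_K/\sigma$ has lens-space (or $S^3$) underlying space by the geometrization of finite group actions, the quotient $|\omin|=|\mathcal{Z}_K|/\sigma$ is again the complement of a knot in a lens space.

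In the rigid case $\omin$ has a Euclidean turnover cusp $S^2(p,q,r)$, and I would invoke Kapovich's observation. The peripheral subgroup of $C^+(\Gamma_K)=\pi_1^{\orb}(\omin)$ is the Euclidean triangle group $\Delta(p,q,r)$, which is generated by three elliptic rotations around the singular strands exiting the cusp. By Armstrong's theorem the natural map $\pi_1^{\orb}(\omin)\twoheadrightarrow\pi_1(|\omin|)$ annihilates the normal closure of the elliptic elements, so in particular the entire peripheral subgroup is killed, and the unique end of $|\omin|$ becomes a simply connected $S^2\times[0,\infty)$. Combining this with irreducibility of $|\omin|$ (inherited from the branched cover $S^3\setminus K\to|\omin|$) and the Poincar\'e conjecture, the end-compactification of $|\omin|$ must be $S^3$, forcing $|\omin|$ to be an open ball.

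The main obstacle I expect is in the rigid case: translating the group-theoretic information that the peripheral subgroup is generated by elliptic elements into a clean topological conclusion about $|\omin|$ requires both controlling the full fundamental group of $|\omin|$ (not only at the cusp) and verifying irreducibility of the end-compactification, issues that are complicated by the fact that the cover $S^3\setminus K\to\omin$ need not be regular when $K$ has hidden symmetries. The flexible case, by contrast, is essentially contained in the Dehn-filling argument already developed for Lemma \ref{lem:intersection}.
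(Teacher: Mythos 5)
Your case division does not match the dichotomy in the statement, and this leads to a wrong conclusion in the $S^2(2,2,2,2)$ case. The relevant split is by the \emph{underlying space} of the cusp cross-section of $\omin$: a torus, or a sphere (the latter covering both the flexible $S^2(2,2,2,2)$ case and the rigid turnover case). When the cusp is $S^2(2,2,2,2)$, the truncated orbifold $\mathcal{\hat O}_{\text{min}}(K)$ has underlying space a compact $3$-manifold with \emph{sphere} boundary, so $|\omin|$ has a spherical end and cannot be the complement of a knot in a lens space; your claim that $|\mathcal{Z}_K|/\sigma$ is again a knot complement in a lens space fails already at the level of the boundary, since the hyperelliptic involution $\sigma$ quotients the boundary torus to $S^2$. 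The correct conclusion in that case is the other alternative of the corollary, namely an open ball, and it is obtained by exactly the same argument as in the rigid case.

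That argument is precisely the step you flag as an obstacle and leave unresolved, so the proposal is missing its key idea. Kapovich's observation is not about the peripheral subgroup being generated by elliptic elements: one considers the composite $S^3 \setminus N(K) \to \mathcal{\hat O}_{\text{min}}(K) \to |\mathcal{\hat O}_{\text{min}}(K)|$ and notes that the meridian of $K$ maps into the image of $\pi_1$ of the boundary sphere, hence to the trivial element; since $\pi_1(S^3\setminus N(K))$ is normally generated by the meridian, the composite is $\pi_1$-trivial. Consequently it lifts to any cover of $|\mathcal{\hat O}_{\text{min}}(K)|$, and since the map is surjective while any non-trivial cover of a compact manifold with a single $S^2$ boundary component has several boundary components, no non-trivial cover exists: $\pi_1(|\mathcal{\hat O}_{\text{min}}(K)|)=1$, and Perelman then gives a ball (capping the boundary sphere with a $3$-ball yields a simply connected closed $3$-manifold, so no separate irreducibility check is needed, and regularity of the cover $S^3\setminus K \to \omin$ plays no role). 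Your treatment of the torus-cusp case via Lemma \ref{lem:intersection} is correct and agrees with the paper.
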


\begin{proof}
Let $\mathcal{O}_{\text{min}}(K)$ be the minimal element of the commensurability class, and $\mathcal{\hat O}_{\text{min}}(K)$ the associated orbifold with boundary obtained by truncating along the cusp.  Since the boundary of $S^3 \setminus N(K)$ is a torus, $\partial \mathcal{\hat O}_{\text{min}}(K)$ is a closed orientable Euclidean 2-orbifold, which implies that it either a torus or has underlying space $S^2$.  When $\partial \mathcal{\hat O}_{\text{min}}(K)$ is a torus, the covering is a regular cyclic covering by \cite{GAW} and \cite[Lemma 4]{Re1}.  Therefore, our analysis in Lemma \ref{lem:intersection} implies that the underlying space of $\mathcal{\hat O}_{\text{min}}(K)$ is a lens space with a regular neighborhood of a knot removed, and that $|\mathcal{O}_{\text{min}}(K)|$ is the complement of a knot in a lens space.  The case when $|\partial \mathcal{\hat O}_{\text{min}}(K)|$ is $S^2$ is an observation of M. Kapovich.  There is a map which is the composition $S^3 \setminus N(K) \rightarrow \mathcal{\hat O}_{\text{min}}(K) \rightarrow |\mathcal{\hat O}_{\text{min}}(K)|$. The image of $\pi_1(S^3 \setminus N(K))$ under the induced homomorphism is trivial, as $\pi_1(S^3 \setminus N(K))$ is normally generated by a meridian.  Therefore if $|\mathcal{\hat O}_{\text{min}}(K)|$ has any non-trivial cover (such as the universal cover) the above map 
$S^3 \setminus N(K) \rightarrow |\mathcal{\hat O}_{\text{min}}(K)|$ lifts to this cover, which is a contradiction as any non-trivial cover of a manifold with boundary $S^2$ has multiple boundary components.  Therefore $|\mathcal{\hat O}_{\text{min}}(K)|$ has trivial fundamental group and by work of Perelman \cite{Morgan} it is a ball. Hence $\mathcal{O}_{\text{min}}(K)$ has underlying space an open ball.  

\end{proof} 

By \cite[Main Theorem]{Ar}, see also \cite{Neu}, $\Gamma \in \Isom^+({\bf H}^3)$ is generated by rotations exactly when the underlying space of ${\bf H}^3 / \Gamma$ is simply-connected.  Therefore we have the following corollary of Corollary \ref{ball}. 

\begin{corollary} A non-invertible hyperbolic knot $K$ has a hidden symmetry if and only if its group $\pi_1(S^3 \setminus K)$ is commensurable with a Kleinian group generated by rotations.
\end{corollary}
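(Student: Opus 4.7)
The plan is to couple Corollary \ref{ball} (which constrains $|\omin|$) with the rotation theorem cited just above, and to track the ``generated by rotations'' property through the commensurator $C^+(\Gamma_K)$. For the forward direction, if $K$ has a hidden symmetry then the cusp of $\omin$ is a rigid Euclidean turnover, so $|\partial \mathcal{\hat O}_{\text{min}}(K)| = S^2$. Corollary \ref{ball} then forces $|\omin|$ to be an open ball, hence simply-connected, and the cited theorem gives that $C^+(\Gamma_K)$ is generated by rotations. Since $\Gamma_K$ has finite index in $C^+(\Gamma_K)$, it is commensurable with this rotation group.

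For the converse, suppose $\Gamma_K$ is commensurable with a Kleinian group $\Gamma$ generated by rotations. Since $K$ is non-invertible it is not the figure-$8$ knot, so the commensurability class is non-arithmetic and, by Margulis, a conjugate of $\Gamma$ sits inside $C^+(\Gamma_K)$ with finite index; I may therefore assume $\Gamma \leq C^+(\Gamma_K)$. Let $R \leq C^+(\Gamma_K)$ be the (normal) subgroup generated by \emph{every} rotation in $C^+(\Gamma_K)$. Since $R \supseteq \Gamma$, it has finite index, and by the cited theorem $|{\bf H}^3/R|$ is simply-connected. The key observation is that the induced action of the finite quotient $G := C^+(\Gamma_K)/R$ on ${\bf H}^3/R$ is \emph{free}: if $gR \in G$ fixes a point of ${\bf H}^3/R$, then $gr$ fixes a point of ${\bf H}^3$ for a suitable $r \in R$; but such a $gr$ is then a rotation and hence already lies in $R$, forcing $gR$ to be trivial. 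Consequently $|{\bf H}^3/R| \to |\omin|$ is an honest finite covering of topological spaces with simply-connected total space, so $\pi_1(|\omin|) \cong G$ is finite. Corollary \ref{ball} now rules out the possibility that $|\omin|$ is the complement of a knot in a lens space (whose fundamental group is infinite, containing the peripheral $\mathbb{Z}^2$), so $|\omin|$ must be an open ball.

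Having reduced to $|\omin|$ being a ball, the cusp cross-section of $\omin$ has underlying space $S^2$, and so is either the pillow $S^2(2,2,2,2)$ or one of the three rigid turnovers. By the trichotomy discussed in the introduction, a pillow cusp would force $K$ to have no hidden symmetries and $\Isom^+(S^3 \setminus K)$ to be dihedral, so $K$ would be strongly invertible, contradicting non-invertibility; thus the cusp is a turnover and $K$ admits hidden symmetries. The main obstacle is the free-action argument in the backward direction, which is what bridges the algebraic hypothesis (commensurability with a rotation group) with the topological conclusion that $|\omin|$ is simply-connected. The non-invertibility hypothesis enters only at the very end, and is used precisely to exclude the pillow cusp case, in which strongly invertible knots without hidden symmetries would otherwise be counterexamples.
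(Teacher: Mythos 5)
Your proof is correct and takes essentially the approach the paper intends: the paper offers no explicit argument for this corollary beyond "it follows from Armstrong's theorem and Corollary \ref{ball}," and your write-up supplies exactly those details — rigid cusp $\Rightarrow$ ball $\Rightarrow$ $C^+(\Gamma_K)$ generated by rotations in one direction, and finiteness of $\pi_1(|\omin|)$ plus the exclusion of the $S^2(2,2,2,2)$ cusp via non-invertibility in the other. Your free-action step merely re-derives, from the quoted "generated by rotations iff simply connected" form of Armstrong's theorem, the identification $\pi_1(|\omin|)\cong C^+(\Gamma_K)/R$ that the full statement of that theorem gives directly, so it is a harmless detour rather than a gap.
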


The following proposition is a  consequence of the proof of  Theorem \ref{th:slopes}. It states that a hyperbolic knot $K$   is not unique in its cyclic commensurability class if and only if $\bar K \subset \mathcal{L}_K$ admits a non-trivial orbi-lens space surgery. More precisely:

\begin{proposition} \label{prop:commensurable} 
A commensurability class contains cyclically commensurable knot complements $S^3 \setminus K$ and $S^3 \setminus K'$ where $K' \ne K$ if and only if  it contains the complement of a knot $\bar K$ in an orbi-lens space $\mathcal{L}$ such that $\bar K$ is primitive in $\mathcal{L}$ and $\mathcal{L}$ admits a non-trivial orbi-lens space surgery $\mathcal{L}'$ along $\bar K$.  We may take $\mathcal{L} \setminus \bar K$ to be $\mathcal{Z}_k$, with slopes $r(K)$ and $r'$ yielding the lens spaces $\mathcal{L}$ and $\mathcal{L}'$ respectively.  If  $\pi':S^3 \rightarrow \mathcal{L}'$ is the universal covering and $\bar K' \subset \mathcal{L}'$ is the core of the $r'$-Dehn filling of $\mathcal{Z}_K$, then $K' = \pi ^{-1}(\bar K'). $
\end{proposition}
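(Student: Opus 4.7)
The plan is to translate Theorem \ref{th:slopes} and the orbi-lens space analysis of Section \ref{orbi-lens} into the language of non-trivial surgery on a primitive knot in an orbi-lens space. Both directions hinge on the identification $\mathcal{Z}_K = \mathcal{L} \setminus \bar K$, where $\mathcal{L} = \mathcal{Z}_K(r(K)) \cong S^3/Z(K)$ and $\bar K$ is the core of the $r(K)$-filling solid torus; equivalently, $\bar K$ is the image of $K$ under the universal orbifold cover $\pi \colon S^3 \to S^3/Z(K) = \mathcal{L}$, and $K' = \pi'^{-1}(\bar K')$ arises as the analogous lift from the surgered orbi-lens space $\mathcal{L}'$.

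For the forward direction, given cyclically commensurable knots $K \neq K'$, I would invoke Proposition \ref{prop:same2} to fix an orientation-preserving homeomorphism $f \colon \mathcal{Z}_{K'} \to \mathcal{Z}_K$ and set $r' := f(r(K'))$. Under the identification $\mathcal{Z}_K = \mathcal{L} \setminus \bar K$, the slopes $r(K)$ and $r'$ correspond to two Dehn fillings along $\bar K$: the first recovers $\mathcal{L}$, while the second produces the orbi-lens space $\mathcal{L}' \cong S^3/Z(K')$ already identified in the proof of Lemma \ref{lem:intersection}. Theorem \ref{th:slopes}(2) forces $r(K) \neq r'$, so this is a non-trivial orbi-lens space surgery on $\bar K$. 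Primitivity of $\bar K$ in $\mathcal{L}$ follows from the fact that $\pi^{-1}(\bar K) = K$ is connected (as $Z(K)$ preserves $K$): the image of $\pi_1(\bar K) \to \pi_1^{\orb}(\mathcal{L})$ has index equal to the number of components of the preimage, hence is all of $\pi_1^{\orb}(\mathcal{L}) \cong H_1(\mathcal{L})$, so $[\bar K]$ generates this cyclic group.

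For the converse direction, given $\bar K$ primitive in $\mathcal{L}$ together with a non-trivial orbi-lens space surgery producing $\mathcal{L}'$ with core $\bar K'$, let $\pi' \colon S^3 \to \mathcal{L}'$ be the universal orbifold cover and set $K := \pi^{-1}(\bar K)$, $K' := \pi'^{-1}(\bar K')$. The primitivity hypothesis, via the same coset count run in reverse, forces these preimages to be connected, so $K, K' \subset S^3$ are genuine knots. Their complements cyclically cover the common exterior $\mathcal{L} \setminus \bar K = \mathcal{L}' \setminus \bar K'$, so by Proposition \ref{prop:cyclicdef} they are cyclically commensurable. To rule out $K = K'$, any orientation-preserving equivalence of the two knots would descend to a self-homeomorphism of $\mathcal{L}\setminus\bar K$ sending the meridional slope of $\bar K$ to that of $\bar K'$, contradicting Lemma \ref{lem:slopes well-defined} since these slopes differ by non-triviality of the surgery. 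The main technical point I expect to pin down carefully is this coset-counting argument in both directions, together with the verification that $\bar K$ lies in the regular part of $\mathcal{L}$ so that $\pi_1(\bar K) \cong \mathbb{Z}$ maps to the cyclic deck group in the expected way; beyond that, the proposition is a repackaging of the identifications already established.
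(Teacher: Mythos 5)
Your outline follows the paper's own proof closely in both directions: the identification $\mathcal{Z}_K = \mathcal{L} \setminus \bar K$, the use of Theorem \ref{th:slopes} and Proposition \ref{prop:same2} in the forward direction, the coset count giving primitivity of $\bar K$ from connectedness of $K$, and the distinctness argument via Lemma \ref{lem:slopes well-defined} are all as in the paper. The genuine gap is in the converse direction, at the sentence claiming that ``the primitivity hypothesis, via the same coset count run in reverse, forces these preimages to be connected, so $K, K' \subset S^3$ are genuine knots.'' The coset count does show that connectedness of $\pi'^{-1}(\bar K')$ is equivalent to primitivity of $\bar K'$ in $\mathcal{L}'$; but primitivity of $\bar K'$ in $\mathcal{L}'$ is not a hypothesis, and it does not follow formally from primitivity of $\bar K$ in $\mathcal{L}$. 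If the surgery slope $r'$ had distance $d > 1$ from the meridional slope $r(K)$ of $\bar K$, the core of the filling solid torus would represent a class that fails to generate $H_1(\mathcal{L}')$, and its preimage in $S^3$ would be a link with several components. As written, the assertion that $K'$ is a knot is unjustified.

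The missing ingredient, which the paper supplies at exactly this point, is the bound $\Delta(r(K), r') \leq 1$ obtained from the cyclic surgery theorem as in the proof of Lemma \ref{lem:intersection}. Since $r' \neq r(K)$, this forces $\Delta(r(K), r') = 1$, so a representative of $r'$ meets a meridian disk of $N(\bar K)$ once and is therefore isotopic to $\bar K$ in $\mathcal{L}$; combined with the primitivity of $\bar K$, one deduces that representatives of $r(K)$ and $r'$ carry the first homology of $\mathcal{L} \setminus \bar K$, hence that the core $\bar K'$ of the $r'$-filling carries a generator of $H_1(\mathcal{L}')$ and its preimage under $\pi'$ is connected. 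Once you insert this step --- and note that it is needed before, not after, you can speak of ``the knot $K'$'' --- the remainder of your argument (the cyclic covers of the common exterior via Proposition \ref{prop:cyclicdef}, and the contradiction with Lemma \ref{lem:slopes well-defined} ruling out $K = K'$) coincides with the paper's.
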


This result gives a way of constructing every knot cyclically  commensurable with $K$. Since the only non-arithmetic knots known to admit hidden symmetries are the two commensurable dodecahedral knots of Aitchison and Rubinstein \cite{AR}, all the other pairs of commensurable hyperbolic knots constructed so far can be obtained from the construction given in  Proposition \ref{prop:commensurable}.

\begin{proof}[Proof of Proposition \ref{prop:commensurable}] 
We continue to use the notation developed in the proof of Theorem \ref{th:slopes}. Suppose a commensurability class $\mathcal{C}$ contains cyclically commensurable knot complements $S^3 \setminus K$ and $S^3 \setminus K'$.  By the proof of Theorem \ref{prop:cyclicdef} the quotients $\mathcal{Z}_k$ and $\mathcal{Z}_{K'}$ are homeomorphic.  By the proof of Theorem \ref{th:slopes}, there are distinct slopes $r(K)$ and $r_{K'}$,  of $\mathcal{Z}_k$  such that filling along these slopes produces lens spaces $\mathcal{L}_K$ and $\mathcal{L}_{K'}$ respectively.  Also, the preimages of the surgery core $\bar K$ in the universal covers of $\mathcal{L}_K$ and $\mathcal{L}_{K'}$ are the knots $K \subset S^3$ and $K' \subset S^3$.  Since $K$ is a knot, $\bar K$ is primitive in $\mathcal{L}_K$.   Thus  $\mathcal{Z}_K$ satisfies the conclusions of the theorem.

Suppose that a commensurability class $\mathcal{C}$ contains the complement of a knot in an orbi-lens space $\mathcal{L} \setminus \bar{K}$ where $\bar K$ is primitive in $\mathcal{L}$ and $\bar K$ admits a non-trivial orbi-lens space surgery.  Then by primitivity, the pre-image of $\bar K$ in the universal cover $S^3$ of $\mathcal{L}$ is a knot $K$.  Since the covering group $S^3 \rightarrow \mathcal{L}$ is cyclic, $S^3 \setminus K$ cyclically covers $\mathcal{L} \setminus \bar K \cong \mathcal{O}$.  Let $r_K$ be the projection of the meridinal slope of $S^3 \setminus K$.  Denote the non-trivial orbi-lens space filling of $\mathcal{L} \setminus \bar K$ by $\mathcal{L}'$ and the filling slope by $r_{K'}$.  By the proof of Lemma \ref{lem:intersection}, $\Delta(r_K, r_{K'}) \leq 1$. Thus a representative curve for $r_{K'}$ is isotopic to $\bar K$ in $\mathcal{L}$. It follows that representative curves for $r_K$ and $r_{K'}$ carry the first homology of $\mathcal{L} \setminus \bar K$. Thus the core $\bar K'$  of the $r_{K'}$-Dehn filling solid torus in $\mathcal{L}'$ carries a generator of $H_1(\mathcal{L}')$ and therefore the pre-image of $\bar K'$ in the universal cover of $\mathcal{L}'$  is a knot in $S^3$. Furthermore, $S^3 \setminus K'$ cyclically covers $\mathcal{O} \cong \mathcal{L}' \setminus \bar K' \cong \mathcal{L} \setminus \bar K$. Therefore, $\mathcal{C}$ contains the cyclically commensurable knots $S^3 \setminus K$ and $S^3 \setminus K'$. Suppose that $K$ is equivalent to $K'$.  An orientation-preserving homeomorphism $S^3 \setminus K \to S^3 \setminus K'$ induces an orientation-preserving homeomorphism $f: \mathcal{L} \setminus \bar K \to \mathcal{L}' \setminus \bar K'$. It is evident that $f(r(K)) = r(K')$. By construction we have an orientation-preserving homeomorphism $g: \mathcal{L} \setminus \bar K \to \mathcal{L}' \setminus \bar K'$ such that $g(r') = r(K')$. Thus $h = g^{-1} \circ f: \mathcal{L} \setminus \bar K \to \mathcal{L} \setminus \bar K$ is an orientation-preserving homeomorphism such that $h(r(K)) = r'$. But this is impossible as Lemma \ref{lem:slopes well-defined} would then imply that $r'  = r(K)$. Thus $K$ and $K'$ are distinct knots by Theorem \ref{th:slopes}. 
\end{proof}

This suggests the following conjecture: 

\begin{conjecture}  \label{conj:rigid} A rigid commensurability class does not contain cyclically commensurable hyperbolic knot complements. 
\end{conjecture}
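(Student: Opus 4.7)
The plan is to argue by contradiction, pushing the strategy of Theorem \ref{th:slopes} one step further by exploiting the rotational symmetries built into the rigid turnover cusp of $\omin$. Suppose a rigid commensurability class $\mathcal{C}$ contains two distinct cyclically commensurable hyperbolic knots $K$ and $K'$. By Proposition \ref{prop:same2}, $\mathcal{Z}_K$ and $\mathcal{Z}_{K'}$ are orientation-preservingly homeomorphic to a single oriented orbifold $\mathcal{Z}$ with one torus cusp, on which the meridian slopes of $K$ and $K'$ descend to distinct slopes $r,r'$ satisfying $\Delta(r,r') \leq 1$ by Lemma \ref{lem:intersection}.

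Since $\mathcal{C}$ is rigid, $\omin$ has a Euclidean turnover cusp $S^2(p,q,r)$ with $(p,q,r) \in \{(2,3,6),(2,4,4),(3,3,3)\}$, and the cusp torus of $\mathcal{Z}$ covers this turnover. Let $\tilde{\mathcal{Z}} \to \omin$ be the Galois closure of $\mathcal{Z} \to \omin$. Its deck group then contains an element $\sigma$ of order $n := \max(p,q,r) \in \{6,4,3\}$ that acts on the cusp torus of $\tilde{\mathcal{Z}}$ as a rotation of order $n$. Pick lifts $\tilde r, \tilde r'$ of $r$ and $r'$. Since any cover of an orbi-lens space is itself an orbi-lens space, the fillings $\tilde{\mathcal{Z}}(\tilde r)$ and $\tilde{\mathcal{Z}}(\tilde r')$ are good orbifolds with finite cyclic fundamental group, and the isometry $\sigma$ carries each such filling to another of the same type. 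Hence every slope in the combined orbit $\{\sigma^i \tilde r,\, \sigma^i \tilde r'\}_{i}$ yields a filling with finite cyclic orbifold fundamental group.

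By the Scholium following Lemma \ref{lem:intersection}, at most three such slopes can exist on a single torus cusp. A short lattice computation on the cusp torus (hexagonal for $n=3,6$, square for $n=4$; remembering that $\sigma^{n/2}$ acts as $-\mathrm{id}$ when $n$ is even, so that the $\sigma$-orbit of a generic slope has $3,2,3$ distinct elements respectively) shows that the orbits of $\tilde r$ and $\tilde r'$ cannot be disjoint without exceeding this bound. Therefore $\tilde r' = \sigma^i \tilde r$ for some $i$. Descending $\sigma^i$ to an orientation-preserving self-homeomorphism of $\mathcal{Z}$ sending $r$ to $r'$, Lemma \ref{lem:slope} then forces $K = K'$, contradicting the assumption that $K \neq K'$.

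The principal obstacle lies in the descending step in the last paragraph. Because $K$ has hidden symmetries, the cover $\mathcal{Z} \to \omin$ is generically non-regular, so the deck transformation $\sigma^i$ of $\tilde{\mathcal{Z}} \to \omin$ need not normalize the subgroup of $\pi_1^{\orb}(\omin) = C^+(\Gamma_K)$ defining $\mathcal{Z}$. One should circumvent this either (i) by directly exhibiting $\sigma^i$ as normalizing the subgroup defining $\mathcal{Z}$, exploiting the arithmetic constraints on the cusp field ($\mathbb{Q}[i]$ or $\mathbb{Q}[\sqrt{-3}]$) provided by Neumann--Reid together with the uniqueness of the rotation inside the commensurator, or (ii) by reinterpreting each slope in the orbit $\{\sigma^i \tilde r\}$ as corresponding to a distinct knot $K_i \in \mathcal{CC}(K)$ and deriving a direct contradiction with the bound $|\mathcal{CC}(K)| \leq 3$ in Theorem \ref{thm:cyclic}(2). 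Either route hinges on a tight control of how the rotational part of the crystallographic group at the turnover cusp sits inside $C^+(\Gamma_K)$ relative to the normalizer $N^+(\Gamma_K)$.
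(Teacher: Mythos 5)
This statement is labelled a \emph{conjecture} in the paper, and the authors offer no proof of it; it is raised as an open problem immediately after Proposition \ref{prop:commensurable}. So there is no argument of theirs to compare yours against, and your proposal must stand on its own. It does not: it is a strategy sketch with two genuine gaps, one of which you candidly acknowledge and which is essentially the entire content of the conjecture. The acknowledged gap is the descent step: the peripheral rotation $\rho$ of order $n$ lives in $C^+(\Gamma_K)$ but, precisely because $K$ has hidden symmetries, it need not normalize $\Gamma_K$ or the subgroup $H = \pi_1^{\orb}(\mathcal{Z}_K)$, so $\sigma^i$ does not descend to a self-homeomorphism of $\mathcal{Z}_K$ carrying $r$ to $r'$, and Lemma \ref{lem:slope} cannot be invoked. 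Neither of your proposed workarounds is carried out, and route (i) in particular would require arithmetic input (control of the rotation inside the commensurator via the cusp field) that is nowhere established. Identifying the obstacle is not the same as overcoming it.

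The second gap is unacknowledged and sits earlier in the argument: the application of the Scholium to $\tilde{\mathcal{Z}}$. The Galois closure of $\mathcal{Z} \to \omin$ will in general have many cusps, whereas the Scholium (and the underlying Lemma \ref{lem:intersection}/CGLS mechanism) requires a hyperbolic orbifold with a \emph{single} torus cusp. Filling only the distinguished cusp $\tilde{T}$ along $\tilde r$ does not produce a closed orbifold, let alone one with finite cyclic fundamental group; the statement ``any cover of an orbi-lens space is an orbi-lens space'' applies to the orbifold obtained by filling \emph{all} cusps of $\tilde{\mathcal{Z}}$ over $r$ simultaneously, with orbifold solid tori whose cores may be singular. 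To run a cyclic-surgery argument on $\tilde{T}$ you would need to fill the remaining cusps in a way that keeps the result hyperbolic while still forcing the two distinguished fillings to have cyclic fundamental group, and nothing in your sketch provides this. The lattice computation bounding the size of the $\sigma$-orbits is correct as far as it goes, but it only becomes relevant once both of these gaps are closed. As it stands, the conjecture remains open and your proposal does not resolve it.
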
 

Theorem \ref{th:periodic} and Proposition \ref{prop:commensurable} reduce the characterisation of hyperbolic knots $K \subset S^3$ such that $|\mathcal{CC}(K)| \geq 2$  to the case where $Z(K)$ acts freely on $S^3$ and to the construction of all primitive knots in a lens space with a non-trivial lens space surgery.  We remark that the situation is completely understood for the case of orbi-lens spaces:

 

\begin{proposition}  Let $\bar K$ be  a primitive hyperbolic knot in an orbi-lens space $L$ with non-trivial ramification locus $\Sigma(L)$.  If a non-trivial Dehn surgery along $\bar K$  produces an orbi-lens space, then $K$ is a Berge-Gabai knot in $L \setminus N(\Sigma(L))$. 
\end{proposition}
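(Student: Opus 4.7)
The plan is to reduce the statement to Gabai's theorem \cite{Gabai1}, which asserts that a knot in a solid torus admitting a non-trivial Dehn surgery yielding a solid torus is a $1$-bridge braid, i.e.\ (by definition) a Berge--Gabai knot. The strategy is to show that $\bar K$ sits inside a solid torus carved out of $L$ by removing the singular locus, and that the hypothesised orbi-lens surgery is, via this identification, a non-trivial solid-torus surgery.

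First I will set up the Heegaard structure. Applying Corollary \ref{equivorbilens} to both $L$ and the surgered orbi-lens space $L'$, each underlying lens space admits a genus-one Heegaard splitting whose pair of cores contains the respective singular locus. Since the orbifold $L\setminus \bar K$ is hyperbolic, $\bar K$ must be disjoint from $\Sigma(L)$, so the Dehn surgery is performed entirely in the smooth part of $L$. In particular, the old singular locus persists in the surgered orbifold, $\Sigma(L)\subseteq \Sigma(L')$.

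Second, in the main case where $\Sigma(L)$ consists of a single component, take it to be a core $C_2$ of a Heegaard solid torus $V_2$ of $|L|$, so that $W:=L\setminus N(\Sigma(L))\cong V_1$ is a solid torus containing $\bar K$. A tubular neighbourhood of $C_2$ inside $V_2$ is disjoint from $\bar K$, hence is preserved by the surgery, so $V_2\subseteq |L'|$ as well. Since $\Sigma(L')$ contains $C_2$ and lies on a genus-one Heegaard splitting of $|L'|$ (Corollary \ref{equivorbilens}), Bonahon--Otal's uniqueness of genus-one Heegaard splittings of lens spaces lets me isotope so that $V_2$ is itself a Heegaard solid torus of $|L'|$. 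Its complementary Heegaard solid torus $V_1'$ is then precisely the result of the Dehn surgery on $\bar K$ performed inside the solid torus $V_1$. Thus $\bar K\subset V_1$ admits a non-trivial Dehn surgery yielding a solid torus, and Gabai's theorem gives the conclusion.

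The main obstacle I expect is the secondary configuration $|\Sigma(L)|=2$, where $W=L\setminus N(\Sigma(L))\cong T^2\times I$ is a thick torus rather than a solid torus. Here both cores $C_1,C_2$ persist as singular components of $\Sigma(L')$, and the same Heegaard-uniqueness argument forces both to sit as cores of a genus-one Heegaard splitting of $|L'|$. One must show this cannot happen: primitivity of $\bar K$ in $L$ constrains its homology class $[\bar K]\in H_1(T^2\times I)=\mathbb Z^2$, and combining this with the requirement that a non-trivial Dehn surgery on $\bar K$ in $T^2\times I$ again yield $T^2\times I$ (so that the surgery preserves the orbi-lens structure of the complement of $\Sigma$), one deduces that $\bar K$ must either be isotopic into a boundary torus or cobound an essential annulus with one of the $C_i$—contradicting hyperbolicity of $L\setminus \bar K$. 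Ruling out this thick-torus case is the technical heart of the argument; the main-case reduction above is then what delivers the Berge--Gabai conclusion.
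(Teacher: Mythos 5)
Your proposal is correct and follows essentially the same route as the paper's (much terser) proof: place $\bar K$ in the Heegaard solid torus complementary to $N(\Sigma(L))$, observe that the orbi-lens space surgery restricts to a non-trivial surgery of that solid torus yielding a solid torus, and invoke Gabai's classification to get the $1$-bridge braid condition in Definition~\ref{def:bg}. The one point you flag as the ``technical heart''---ruling out the configuration where $L \setminus N(\Sigma(L)) \cong T^2 \times I$---is exactly the content of Lemma~\ref{lem:product} (packaged in Proposition~\ref{Berge-Gabai}), which the paper has already established via Gabai's Thurston-norm result, so it can simply be cited rather than re-derived.
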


\begin{proof}  Let $V_1 \cup V_2$ be the Heegaard splitting of $L$ where $V_1$ is a regular neighborhood of $\Sigma(L)$ and $\bar K \subset V_2$.  Assume non-trivial surgery along $\bar K$ in $L$ yields an orbi-lens space $L'$.  By removing neighborhoods of the ramification loci in $L$ and $L'$, we see that non-trivial surgery along $\bar K$ in $V_2$ yields a solid torus.  Then by Definition \ref{def:bg} $\bar K$ is a Berge-Gabai knot in $V_2 = L \setminus N(\Sigma(L)).$
\end{proof}

\section{Unwrapped $1$-bridge braids} \label{unwrapped}

In this section we prove Theorem \ref{th:periodic} which characterizes all periodic hyperbolic knots such that $|\mathcal{CC}(K)| \geq 2$.

Recall that a {\it $1$-bridge braid} in a solid torus $V$ is a braid in $V$ which is $1$-bridge with respect to some boundary-parallel torus in $\hbox{int}(V)$. Connected $1$-bridge braids have been classified in \cite{Gabai3}. 

A {\it cosmetic surgery slope} of a knot in a $3$-manifold $W$ is a slope on the boundary of the exterior of the knot whose associated surgery yields a manifold homeomorphic to $W$. We say that $K$ has a {\it non-trivial cosmetic surgery} if it has such a slope which is distinct from the knot's meridian. The following proposition is a consequence of work of Gabai and Gordon-Luecke. 

\begin{proposition} \label{Berge-Gabai} If a hyperbolic knot $K$ in $V \cong S^1 \times D^2$ or $V \cong S^1 \times S^1 \times I$ admits a non-trivial cosmetic surgery, then $V \cong S^1 \times D^2$ and $K$ is a $1$-bridge braid. 
\end{proposition}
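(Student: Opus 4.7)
The plan is to split into two cases according to whether $V$ is a solid torus or a thickened torus, and rely primarily on Gabai's classification of cosmetic/reducing surgeries on knots in solid tori, supplemented by the Gordon--Luecke knot complement theorem to handle the thickened-torus alternative.

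\emph{Case $V \cong S^1 \times D^2$.} Here one invokes directly Gabai's theorem from \emph{Surgery on knots in solid tori} (Topology 28, 1989): every knot in a solid torus that admits a non-trivial Dehn surgery yielding another solid torus is either a $0$-bridge braid (a torus knot on a concentric torus in $V$) or a $1$-bridge braid. Since a $0$-bridge braid has Seifert fibred complement, the hyperbolicity hypothesis on $K$ rules this out, leaving $K$ as a $1$-bridge braid. This is precisely the asserted conclusion.

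\emph{Case $V \cong T^2 \times I$.} I would rule this possibility out entirely. The plan is to embed $V$ standardly in $S^{3}$ as a regular neighborhood of the Heegaard torus, writing $S^{3} = V_{0} \cup V \cup V_{1}$ with $V_{0}, V_{1}$ the complementary solid tori, and let $\hat K \subset S^{3}$ denote the image of $K$. The cosmetic self-homeomorphism $V \to V$ produced by the surgery can be adjusted, by composition with Dehn twists along the two boundary tori of $V$ (which act trivially on $V$ up to isotopy), so that it respects the identification used to cap off with $V_{0}$ and $V_{1}$. This converts the non-trivial cosmetic surgery on $K$ in $V$ into a non-trivial Dehn surgery on $\hat K$ in $S^{3}$ whose result is again $S^{3}$. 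The hyperbolicity of $K$ in $V$ forces the two boundary tori of $V$ to be incompressible in the exterior of $K$, from which one checks that $\hat K$ is a non-trivial knot in $S^{3}$. Gordon--Luecke's knot complement theorem then asserts that no non-trivial knot in $S^{3}$ admits a non-meridional surgery yielding $S^{3}$, giving the desired contradiction.

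\emph{Main obstacle.} The delicate point is entirely in the thickened torus case: verifying (a) that the cosmetic homeomorphism of $V$ can be arranged, up to isotopy and boundary twists, so that capping by the fixed pair $V_{0}, V_{1}$ consistently produces $S^{3}$ both before and after surgery, and (b) that the resulting knot $\hat K$ is non-trivial in $S^{3}$. Step (b) follows from hyperbolicity (so $K$ is not contained in a $3$-ball in $V$, nor isotopic into $\partial V$), while step (a) reduces to showing that the induced automorphism of $H_{1}(\partial V)$ can be trivialized by boundary Dehn twists, which in turn uses that the mapping class group of $T^{2} \times I$ is generated by boundary twists together with maps that extend over the caps. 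Once these are in place, the Gordon--Luecke input closes the argument.
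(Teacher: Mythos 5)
Your solid torus case coincides with the paper's: Gabai's classification of knots in a solid torus admitting a non-trivial surgery yielding a solid torus (knots contained in a $3$-ball, $0$-bridge braids, $1$-bridge braids), with hyperbolicity eliminating all but the last. The gap is in the thickened torus case. When you cap off the surgered manifold $K(r)\cong T^2\times I$ with the fixed solid tori $V_0,V_1$, what you obtain is $V_0\cup (T^2\times I)\cup V_1$ with attaching maps twisted by the restrictions of the cosmetic homeomorphism $h\colon K(r)\to V$ to the two boundary tori. This is a priori only a lens space (or $S^1\times S^2$), not $S^3$: the product structure that $h$ induces on $K(r)$ identifies $T^2\times\{0\}$ with $T^2\times\{1\}$ in a way unrelated to the original product structure on $V$, so there is no control on the distance between the images of the two meridians $\mu_0,\mu_1$. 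Your proposed repair does not achieve this control: a Dehn twist along a boundary-parallel torus is supported in a collar and restricts to the identity on $\partial V$, so it cannot alter $h|_{\partial V}$ at all, while the mapping classes of $T^2\times I$ that do act non-trivially on the boundary act \emph{diagonally} on the two ends, so you cannot independently normalize both restrictions so as to preserve $\mu_0$ and $\mu_1$. Finally, even the conclusion you could legitimately extract --- that $\hat K\subset S^3$ admits a non-trivial surgery yielding a lens space --- is no contradiction, since hyperbolic knots in $S^3$ with lens space surgeries exist in abundance; Gordon--Luecke therefore cannot close the argument.

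The paper's treatment of the $T^2\times S^1\times I$ case is genuinely different and supplies the missing ingredient. First, Dehn fill one boundary component of $V$ along a slope for which $K$ remains hyperbolic in the resulting solid torus; the surgery is still cosmetic there, so by the solid torus case $K$ is a $1$-bridge braid in that solid torus and in particular is homologically non-trivial in $V$. Writing $[K]=m[C]$ for an essential simple closed curve $C\subset T^2\times\{0\}$, the annulus $A=C\times I$ has zero algebraic intersection with $K$ and hence defines a class in $H_2(V\setminus K,\partial V)$. By Gabai's theorem from \emph{Foliations and the topology of $3$-manifolds II}, a Thurston-norm-minimizing representative $F$ of this class remains norm-minimizing after all but at most one Dehn filling of $\partial N(K)$; since two distinct fillings (the meridian and $r$) return $T^2\times I$, where the class is represented by an annulus of norm zero, $F$ must itself be an essential annulus, contradicting the hyperbolicity of $K$ in $V$. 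If you wish to retain your two-case structure, this norm-minimization argument is what must replace the embedding into $S^3$.
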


\begin{proof} First assume that $K$ is a hyperbolic knot in $V \cong S^1 \times D^2$. Gabai \cite{Gabai1} has shown that any knot in a solid torus which admits a non-trivial cosmetic surgery is either contained in a $3$-ball or is a $0$-bridge braid or is a $1$-bridge braid. In our case, hyperbolicity rules out the first two cases. Thus $K$ is a $1$-bridge braid. 

The case where  $K$ is a hyperbolic knot in $V \cong S^1 \times S^1 \times I$ is ruled out by the following lemma:

\begin{lemma}\label{lem:product} A hyperbolic knot $K$ in $V \cong S^1 \times S^1 \times I$ admits no non-trivial cosmetic surgery.
\end{lemma}

\begin{proof} Assume that there is a non-trivial cosmetic surgery $r$ for $K$. Then $r$ is a non-trivial cosmetic surgery slope when $K$ is considered as a knot in any Dehn filling of $V$ along $T^2 \times {0}$. Choose such a filling in which $K$ remains hyperbolic. The previous argument then implies that $K$ is not homologically trivial in the Dehn filling of $V$, and therefore not in $V$ as well. Then   there is an essential simple closed curve $C \subset T^2 \times {0}$ such that the class in $H_1(V)$ carried by $K$ is an integral multiple of that carried by $C$. Since the algebraic intersection of $K$ with the properly embedded, essential annulus $A = C \times I \subset V$ is nul, $A$ defines a homology class $[A] \in H_2(V\setminus K, \partial V) \cong \mathbb Z$. Let $(F, \partial F) \subset (V\setminus K, \partial V)$ be a norm minimizing surface representing the homology class $[A]$. By a result of Gabai \cite[Corollary]{Gabai2}, $F$ remains norm minimizing in all manifolds obtained by Dehn surgeries along $K$ except at most one. Since two such surgeries yield manifolds homeomorphic to $S^1 \times S^1 \times I$, $F$ must be an essential annulus, contrary to the hypothesis that $K$ is hyperbolic in $V$. Thus the lemma holds. 
\end{proof} 
This completes the proof of Proposition \ref{Berge-Gabai}. \end{proof} 

Recall the hyperbolic manifold 
$$\mathcal{Z}_K^0 = \mathcal{Z}_K \setminus N(\Sigma(\mathcal{Z}_K))$$ 
defined in the proof of Lemma \ref{lem:intersection}. It follows from this proof that if $|\mathcal{CC}(K)| > 1$ and $|\Sigma(\mathcal{Z}_K)| \geq 1$, then the core $\bar K$ of the Dehn filling $\mathcal{Z}_K^0 (r(K)) \cong S^1 \times D^2$ or $S^1 \times S^1 \times I$ admits a non-trivial cosmetic surgery. Hence Proposition \ref{Berge-Gabai} immediately implies the following corollary: 

\begin{corollary} \label{s=2}
If $K$ is a periodic hyperbolic knot and $|\Sigma(\mathcal{Z}_K)| = 2$, then $|\mathcal{CC}(K)| = 1$. In particular, if $K$ has no hidden symmetry 
$|\mathcal{C}(K)| = 1$. 
\hfill $\Box$ 
\end{corollary}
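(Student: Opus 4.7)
The plan is to argue by contradiction using the machinery already in place. Assume $|\mathcal{CC}(K)| \geq 2$. The first step is to invoke the case analysis in the proof of Lemma \ref{lem:intersection}: when $|\Sigma(\mathcal{Z}_K)| = 2$, the table given there yields
\[
\mathcal{Z}_K^0(r(K)) \;\cong\; S^1 \times S^1 \times [0,1].
\]
For any $K' \in \mathcal{CC}(K) \setminus \{K\}$, the associated slope $r_{K'}$ in the cusp of $\mathcal{Z}_K$ fills $\mathcal{Z}_K^0$ to yield another orbi-lens space, and for the same reason (since $|\Sigma(\mathcal{Z}_K)| = 2$) we must have $\mathcal{Z}_K^0(r_{K'}) \cong S^1 \times S^1 \times [0,1]$ as well. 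By assertion (2) of Theorem \ref{th:slopes} the slopes $r(K)$ and $r_{K'}$ are distinct.

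Next, I would reinterpret this geometrically. Let $\bar K$ denote the core of the Dehn filling solid torus used to form $\mathcal{Z}_K^0(r(K))$; that is, view $\bar K$ as a knot inside $V := S^1 \times S^1 \times [0,1]$. Its exterior is $\mathcal{Z}_K^0$, which is hyperbolic by the opening of the proof of Lemma \ref{lem:intersection}, so $\bar K$ is a hyperbolic knot in $V$. The equality $\mathcal{Z}_K^0(r_{K'}) \cong V$ precisely says that $r_{K'}$ is a cosmetic surgery slope for $\bar K$, and it is non-trivial since $r_{K'} \neq r(K)$.

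Now I apply Proposition \ref{Berge-Gabai}: it asserts that any hyperbolic knot in $V \cong S^1 \times D^2$ or $V \cong S^1 \times S^1 \times I$ admitting a non-trivial cosmetic surgery must lie in the solid torus case, and indeed must be a $1$-bridge braid. This directly contradicts the fact that our $\bar K$ lives in $S^1 \times S^1 \times I$ (the contradiction is in fact isolated cleanly in Lemma \ref{lem:product}, whose proof uses Gabai's result that a norm-minimizing surface remains norm-minimizing after all but at most one Dehn filling). This contradiction forces $|\mathcal{CC}(K)| = 1$.

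For the ``in particular'' clause, when $K$ has no hidden symmetries Proposition \ref{prop:same} gives $\mathcal{C}(K) = \mathcal{CC}(K)$, so $|\mathcal{C}(K)| = 1$ follows immediately. There is no real obstacle here: all the hard work (Gabai's theorem, the Berge--Gabai classification, the cosmetic surgery consequence Lemma \ref{lem:product}) has already been done upstream, and the corollary is essentially a direct reading of the two-singular-component row of the table in the proof of Lemma \ref{lem:intersection} against Proposition \ref{Berge-Gabai}.
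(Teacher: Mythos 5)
Your proof is correct and follows exactly the route the paper takes: the text immediately preceding the corollary observes that $|\mathcal{CC}(K)|>1$ with $|\Sigma(\mathcal{Z}_K)|=2$ would make the core of the filling solid torus a hyperbolic knot in $S^1\times S^1\times I$ with a non-trivial cosmetic surgery, which Proposition \ref{Berge-Gabai} (via Lemma \ref{lem:product}) forbids. Your handling of the ``in particular'' clause via Proposition \ref{prop:same} is also the intended one.
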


This result implies assertion (1) of Theorem \ref{th:periodic}. Next we examine the case $|\Sigma(\mathcal{Z}_{K})| = 1$. 

\begin{definition}  \label{def:bg}
{\rm A {\it Berge-Gabai knot in a solid torus} is a $1$-bridge braid in a solid torus which admits a non-trivial cosmetic surgery slope.}
\end{definition}

The {\it winding number} of a Berge-Gabai knot in a solid torus is the braid index of its associated $1$-bridge braid.

Berge-Gabai knots and their cosmetic surgery slopes have been classified. See \cite{Bergetori}, \cite{Gabai3}. Moreover, it follows from the description given in \cite{Gabai3} that these knots can be embedded in $S^3$ as homogeneous braids and hence as fibred knots by Stallings \cite{Sta1}.

\begin{definition} $\;$ \\ 
{\rm (1) Let $w, p, q, a$ be integers with $w, a, p \geq 1$ and $\gcd(p, q) = \gcd(w, ap) = 1$. A {\it Berge-Gabai knot $\bar K$ of winding number $w$} in $\mathcal{L}(p,q; a)$ consists of a knot $\bar K \subset \mathcal{L}(p,q; a)$ and a genus one Heegaard splitting $V_1 \cup V_2$ of $|\mathcal{L}(p,q; a)|$ such that $\bar K$ is a Berge-Gabai knot of winding number $w$ in $V_1$ and $\Sigma(\mathcal{L}(p,q;a))$ is a closed submanifold of the core of $V_2$. \\ 
(2) A {\it $(p,q;a)$-unwrapped Berge-Gabai knot} in $S^3$  is the inverse image of a Berge-Gabai knot in $\mathcal{L}(p,q; a)$ under the universal cover $S^3 \to \mathcal{L}(p,q; a)$.}
\end{definition}

Note that the inverse image in $S^3$ of a Berge-Gabai knot in $\mathcal{L}(p,q; a)$ is connected if and only if the winding number $w$ of the knot is coprime to $ap$.

\begin{lemma} \label{coprime} 
Suppose that $\bar K$ is a Berge-Gabai knot of winding number $w$ in $\mathcal{L}(p,q; a)$ where $p \geq 1$ is coprime with $w$. Let $V_1 \cup V_2$ be a Heegaard splitting of $L(p,q)$ where $\bar K \subset \hbox{int}(V_1)$ and $\Sigma(\mathcal{L}(p,q;a))$ is a closed submanifold of the core of $V_2$. If $r$ is a non-trivial cosmetic surgery slope of $\bar K$ considered as a knot in $V_1$, then $\bar K(r) \cong \mathcal{L}(p', q'; a)$ where $\gcd(p, p') = 1$. 
\end{lemma}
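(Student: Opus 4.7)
My plan is to compute $p' = |H_1(|\bar K(r)|)|$ directly from the Heegaard data of $\mathcal{L}(p,q;a)$ and the surgery slope, and then appeal to Gabai's classification of cosmetic surgery slopes on $1$-bridge braids \cite{Gabai3} to conclude.

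First I would fix standard meridian--longitude bases $(m_i,\ell_i)$ on $\partial V_i$ so that the Heegaard gluing reads $m_1 = p\ell_2 + qm_2$ and $\ell_1 = a\ell_2 + bm_2$ with $qa - pb = \pm 1$; this immediately gives $\gcd(a,p) = 1$. On $\partial N(\bar K)$ I choose the meridian $\mu$ of $\bar K$ together with the canonical longitude $\lambda$ determined by $[\lambda] = w[\ell_1]$ in $H_1(V_1\setminus N(\bar K))$, so that $m_1 = w\mu$ in the same group. Writing the cosmetic slope as $r = s\mu + t\lambda$ with $\gcd(s,t)=1$, the fact that $V_1' = (V_1\setminus N(\bar K))(r)$ is a solid torus forces $\gcd(s,tw) = 1$, whence $\gcd(s,w)=1$.

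Next I would identify the new meridian $m^* \in H_1(\partial V_1)$ of $V_1'$, namely the primitive class vanishing in $H_1(V_1') = \mathbb Z\langle \mu,\ell_1\rangle/\langle s\mu + tw\ell_1\rangle$. A short calculation, solving $\alpha w = ks$ and $\beta = ktw$ for the smallest $k$ (which is $k=w$ since $\gcd(s,w)=1$), yields $m^* = sm_1 + tw^2\ell_1$, primitive because $\gcd(s,tw^2)=1$. Since $\bar K \subset \hbox{int}(V_1)$, the surgery leaves $V_2$ and the ramification locus $\Sigma(\mathcal{L}(p,q;a)) \subset V_2$ untouched, so $\bar K(r) \cong \mathcal{L}(p',q';a)$ for some $p',q'$ by Corollary \ref{equivorbilens}. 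Rewriting $m^*$ in the $V_2$-basis and using $m_2=0$ in $H_1(V_2)$ yields $p' = |sp + tw^2 a|$, and hence
\[
\gcd(p,p') \;=\; \gcd(p,\, tw^2 a) \;=\; \gcd(p,t),
\]
where the last equality uses $\gcd(p,w)=1$ (hypothesis) and $\gcd(p,a)=1$ (gluing).

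The final step is to prove $\gcd(p,t)=1$, which is where I would invoke Gabai's classification: a non-trivial solid-torus surgery slope for a $1$-bridge braid is integral (i.e.\ intersects the longitude once) in any framing whose longitude projects to $w[\ell_1]$ in $H_1(V_1\setminus N(\bar K))$. Since altering $\lambda$ by a multiple of $\mu$ sends $(s,t)$ to $(s+kt,t)$ and so leaves $t$ unchanged, this integrality translates to $t = \pm 1$ in the framing above, giving $\gcd(p,t) = 1$. The main obstacle is the book-keeping at this last step: pinning down Gabai's exact parameterization of $1$-bridge braids and their cosmetic surgery coefficients in \cite{Gabai3} and converting them to the framing used here, so as to verify the integrality statement $t = \pm 1$ unambiguously.
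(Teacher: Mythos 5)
Your proposal follows essentially the same route as the paper: identify the new meridian of the surgered solid torus homologically (the paper cites Lemma 3.3 of \cite{Go} for $\mu_1(r) = m\mu_1 + w^2\lambda_1$, which is exactly your $m^* = sm_1 + tw^2\ell_1$ with $t=\pm1$), pair it against the meridian of $V_2$ to get $p'$, and reduce coprimality to the integrality of the cosmetic slope. The step you flag as the main obstacle is precisely Lemma 3.2 of \cite{Gabai3}, which the paper invokes verbatim as $r = \pm(m\mu_0 + \lambda_0)$ with $\gcd(m,w)=1$ in the framing $\lambda_0 = w\lambda_1$, so your $t=\pm1$ is available off the shelf; the only point you omit that the paper checks is that $p' \ne 0$ (needed so that $|\bar K(r)|$ is a lens space rather than $S^1\times S^2$), which follows since $\gcd(m,w)=\gcd(p,w)=1$ rules out $mp = qw^2$ for a braid of winding number $w>1$.
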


\begin{proof} 

It is clear that $|\bar K(r)|$ has Heegaard genus one, so is $L(p',q')$ for some $p' \geq 0$. (We take the convention that $L(0, q') \cong S^1 \times S^2$.) We must show $p'$ is non-zero and relatively prime to $p$.

Let $W$ be the exterior of $\bar K$ in $V_1$ and write $\partial W = T_0 \cup T_1$ where $T_1 = \partial V_1$ and $T_0$ is the boundary of a tubular neighborhood of $\bar K$. There are bases $\mu_0, \lambda_0$ of $H_1(T_0)$ and $\mu_1, \lambda_1$ of $H_1(T_1)$ such that $\mu_0$ is a meridian of $\bar K$, $\mu_1$ is a meridian of $V_1$, and $\mu_1 = w \mu_0, \lambda_0 = w \lambda_1$ in $H_1(W)$. 

It is shown in Lemma 3.2 of \cite{Gabai3} that $r = \pm(m \mu_0 + \lambda_0)$ where $\gcd(m, w) = 1$. A homological calculation (see Lemma 3.3 of \cite{Go}) shows that $\mu_1(r)$, the meridian slope of the solid torus $(V_1, \bar K)(r)$, is given by $\mu_1(r) = m \mu_1 + w^2 \lambda_1$. By hypothesis, $q \mu_1 + p \lambda_1$ is the meridian of $V_2$ and therefore 
$$p' = \Delta(\mu_1(r), q \mu_1 + p \lambda_1) = 
\Delta(m \mu_1 + w^2 \lambda_1, q \mu_1 + p \lambda_1) = |mp - qw^2| .$$
Since $p$ is coprime to $q$ and $w^2$, it is coprime to $|mp - qw^2|$, and since $p \geq 1$ and $\gcd(m, w) = 1$, $|mp - qw^2| \ne 0$. Thus the lemma holds. 
\end{proof}

Next we characterize periodic hyperbolic knots $K$  such that  $|\Sigma(\mathcal{Z}_K)| = 1$ and $|\mathcal{CC}(K)| \geq 2$. This will finish the proof of Theorem \ref{th:periodic}.

\begin{proposition}\label{prop:berge-gabai}
Let $K$ be a hyperbolic knot in $S^3$. \\ 
$(1)$  If $K$ is periodic such that $|\Sigma(\mathcal{Z}_K)| = 1$ and $|\mathcal{CC}(K)| \geq 2$ then \\
\indent \hspace{.3cm} $(a)$ $S^3/Z(K) = \mathcal{L}(p,q; a)$, where $ap = |Z(K)|$ and  the image $\bar K$ of $K$ in $\mathcal{L}(p,q; a)$ is \\
\indent \hspace{.9cm} a Berge-Gabai knot of winding number prime to $|Z(K)|$. Thus $K$ is the $(p,q;a)$-\\
\indent \hspace{.9cm} unwrapped Berge-Gabai knot associated to the core of the surgery torus in $\mathcal{Z}_K^0(r(K))$\\
\indent \hspace{.9cm} $\cong S^1 \times D^2$. \\
\indent \hspace{.3cm} $(b)$  $K$ is strongly invertible. \\
\indent \hspace{.3cm} $(c)$ each $K' \in \mathcal{CC}(K) \setminus K$ is a $(p',q';a)$-unwrapped Berge-Gabai knot associated  \\
\indent \hspace{.9cm} to the core of the surgery solid torus in $\mathcal{Z}_K^0(r')$ where $|Z(K')| = ap', \gcd(p, p') = 1$,  \\
\indent \hspace{.9cm} and $r' = f(r(K'))$ where $f: \mathcal{Z}_{K'} \to \mathcal{Z}_K$ is an orientation-preserving homeomor- \\
\indent \hspace{.9cm} phism.\\ 
$(2)$ If $K$ is a $(p,q;a)$-unwrapped Berge-Gabai knot, then $|\mathcal{CC}(K)| \geq 2$. 
\end{proposition}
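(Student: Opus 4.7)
The plan is to deal with (1) and (2) in turn, with part (1)(a) carrying most of the work; the remaining items are quick consequences once the Berge-Gabai structure of $\bar K$ has been established. For (1)(a), I would first apply the geometrisation of finite group actions on $S^3$ to realise $Z(K)$ as a subgroup of $SO(4)$, so that $\mathcal{L}_K := S^3/Z(K)$ is an orbi-lens space in the sense of Lemma \ref{s3cyclicquotient}. Because $Z(K)$ acts freely on $K$, the singular set of $\mathcal{L}_K$ coincides with $\Sigma(\mathcal{Z}_K)$ and so consists of a single circle; Corollary \ref{equivorbilens} then identifies $\mathcal{L}_K$ with some $\mathcal{L}(p,q;a)$ satisfying $ap = |Z(K)|$. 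I would then use $|\mathcal{CC}(K)| \geq 2$: Theorem \ref{th:slopes} provides $K' \in \mathcal{CC}(K)$, an orientation-preserving homeomorphism $f : \mathcal{Z}_{K'} \to \mathcal{Z}_K$, and a slope $r' := f(r(K')) \neq r(K)$ on the cusp of $\mathcal{Z}_K$. The case $|\Sigma(\mathcal{Z}_K)| = 1$ of the calculation in the proof of Lemma \ref{lem:intersection} shows that both $\mathcal{Z}_K^0(r(K))$ and $\mathcal{Z}_K^0(r')$ are solid tori. Viewing $\bar K$ as the core of the $r(K)$-filling in $\mathcal{Z}_K^0(r(K)) = V_1 \cong S^1 \times D^2$, it becomes a hyperbolic knot in $V_1$ with non-trivial cosmetic surgery slope $r'$, so Proposition \ref{Berge-Gabai} forces $\bar K$ to be a $1$-bridge braid, hence by definition a Berge-Gabai knot; coprimality of its winding number with $ap$ is precisely the condition needed for the preimage $K$ of $\bar K$ under $S^3 \to \mathcal{L}(p,q;a)$ to be connected.

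For (1)(b), I would appeal to the structural description of $1$-bridge braids in \cite{Gabai3}: any such braid in $V_1$ admits a strong inversion preserving $V_1$; this involution can be chosen to extend over the singular axis of $\mathcal{L}(p,q;a)$ and then lifts, via the universal cover $S^3 \to \mathcal{L}(p,q;a)$, to a strong inversion of $K$. For (1)(c), I would rerun the previous analysis with the roles of $r(K)$ and $r'$ swapped: $\mathcal{L}' := \mathcal{Z}_K(r') \cong S^3/Z(K')$ is an orbi-lens space whose singular set contains at most one circle (because $Z(K')$ is cyclic and so fixes at most one axis of $S^3$); but $\Sigma(\mathcal{Z}_K) \subset \mathcal{L}'$ already supplies one such circle of isotropy $a$, so $\mathcal{L}' = \mathcal{L}(p',q';a)$ with the same $a$ and with $ap' = |Z(K')|$. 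Lemma \ref{coprime}, applied to the Berge-Gabai knot $\bar K$ in $\mathcal{L}(p,q;a)$, then gives $\gcd(p,p') = 1$, and by the symmetry of the roles of $K$ and $K'$ the core $\bar K'$ of the $r'$-filling is itself a Berge-Gabai knot in $\mathcal{L}(p',q';a)$, presenting $K'$ as the $(p',q';a)$-unwrapped Berge-Gabai knot described in (c).

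For (2), the hypothesis says exactly that $\bar K \subset \mathcal{L}(p,q;a)$ is primitive (since $K$ is connected) and admits a non-trivial cosmetic surgery slope in $V_1$; Lemma \ref{coprime} promotes this to a non-trivial orbi-lens space surgery on $\bar K$ in $\mathcal{L}(p,q;a)$, and Proposition \ref{prop:commensurable} then immediately yields $|\mathcal{CC}(K)| \geq 2$. I expect the main obstacle to lie in (1)(b): one needs both that $1$-bridge braids in $V_1$ admit a strong inversion and that the chosen inversion is compatible with the $Z(K)$-action, so that it descends to $\mathcal{L}(p,q;a)$ and lifts back to $K$. The remainder of the argument is systematic bookkeeping built on Proposition \ref{Berge-Gabai}, Lemma \ref{coprime}, Corollary \ref{equivorbilens}, and Proposition \ref{prop:commensurable}.
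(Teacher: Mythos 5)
Your treatment of (1)(a), (1)(c) and (2) follows the paper's proof closely: identify $S^3/Z(K)$ as $\mathcal{L}(p,q;a)$ via Corollary \ref{equivorbilens}, use Theorem \ref{th:slopes} and the proof of Lemma \ref{lem:intersection} to exhibit $r'$ as a non-trivial cosmetic surgery slope of $\bar K$ in $V_1 = \mathcal{Z}_K^0(r(K))$, invoke Proposition \ref{Berge-Gabai} to get the $1$-bridge braid structure, and close with Lemma \ref{coprime} and Proposition \ref{prop:commensurable}. For (2) the paper additionally separates the case $ap=1$, where $\mathcal{L}(p,q;a)=S^3$ and one must rule out the surgered lens space being simply connected (it cites \cite{KM} for this); your uniform treatment silently assumes $p'>1$ there, and you should at least note why Lemma \ref{coprime} alone does not give this when $p=1$.

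Two further points need repair. First, in (1)(c) your parenthetical justification --- that $Z(K')$ is cyclic ``and so fixes at most one axis of $S^3$'' --- is false: Lemma \ref{s3cyclicquotient} shows that the singular set of $S^3/Z$ for $Z$ cyclic can have two components, since different powers of the generator may fix the two core circles. The correct (and simpler) reason that $\mathcal{L}' = \mathcal{Z}_K(r')$ has the form $\mathcal{L}(p',q';a)$ is that Dehn filling the torus cusp introduces no singularity, so $\Sigma(\mathcal{L}') = \Sigma(\mathcal{Z}_K)$, which is a single circle with isotropy $\mathbb{Z}/a$ by hypothesis. Second, your route to (1)(b) genuinely differs from the paper's and has a gap. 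The paper observes that $\bar K$ lies on a genus $2$ Heegaard surface of $L(p,q)$, deduces that $\mathcal{L}(p,q;a)$ admits an orientation-preserving involution inverting $\bar K$, concludes that $\mathcal{Z}_K$ is not minimal in its commensurability class, and hence that $\Isom^+(S^3\setminus K)\ne Z(K)$, which forces a strong inversion. You instead propose to lift the involution through the cyclic cover $S^3\to\mathcal{L}(p,q;a)$. The gap is that a lift of an involution through a degree-$n$ cyclic cover need not be an involution: the lifts together with the deck group generate a $\mathbb{Z}/2$-extension $G$ of $Z(K)$ which could a priori be cyclic of order $2|Z(K)|$. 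This is fixable without choosing a good lift: any $g\in G\setminus Z(K)$ reverses the orientation of $K$ (since the involution downstairs reverses $\bar K$), hence fixes two points of $K$; then $g^2$ fixes $K$ pointwise and must be trivial, since otherwise the Smith conjecture would make its fixed-point set an unknot while $K$ is hyperbolic. Thus $G$ is dihedral and $K$ is strongly invertible. Once patched, your argument has the mild advantage of not passing through minimality of $\mathcal{Z}_K$ in the commensurability class, which is the one step of the paper's proof of (1)(b) that leans on the absence of hidden symmetries.
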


This result holds for a periodic hyperbolic knot $K$ without hidden symmetries and any $K' \in \mathcal{C}(K) \setminus K$.

\begin{proof}
First suppose that $K$ is a knot without hidden symmetries such that $|\Sigma(\mathcal{Z}_K)| = 1$ and $|\mathcal{CC}(K)| > 1$. 
Corollary \ref{equivorbilens} shows that $S^3/Z(K)$ is an orbi-lens space $\mathcal{L}(p,q; a)$ where $ap = |Z(K)|$. Let $\bar K$ be the image of $K$ in $\mathcal{L}(p,q; a)$. There is a genus one Heegaard splitting $V_1 \cup V_2$ of $L(p,q)$ such that $V_1 = \mathcal{Z}_K^0(r(K))$ and $V_2$ is a regular neighborhood of $\Sigma(\mathcal{L}(p,q; a)) = \Sigma(\mathcal{Z}_K)$. It follows from Theorem \ref{th:slopes} and the proof of Lemma \ref{lem:intersection} that for $K \ne K' \in \mathcal{C}(K)$, the image of $r(K')$ in the cusp of $\mathcal{Z}_K$ under a homeomorphism $\mathcal{Z}_{K'} \to \mathcal{Z}_K$ is a non-trivial cosmetic surgery slope of $\bar K$ in $V_1$. Hence, $\bar K$ is a Gabai-Berge knot in $V_1$, and as its inverse image in $S^3$ is $K$, it has winding number coprime to $ap = |Z(K)|$. Thus $K$ is a $(p,q;a)$-unwrapped Berge-Gabai knot. 

Note that as $\bar K$ is $1$-bridge braid in $V_1$, it lies on a genus $2$ Heegaard surface of $L(p,q)$ (c.f. the proof of Theorem \ref{th_fibred}). It follows that $\mathcal{L}(p,q;a)$ admits an orientation-preserving involution which reverses the orientation of $\bar K$. Hence $\mathcal{Z}_K = (S^3 \setminus K)/Z(K)$ is not minimal in its commensurability class. It follows that $\Isom^+(S^3 \setminus K) \ne Z(K)$, so $K$ is strongly invertible. 

Consider $K' \in \mathcal{C}(K) \setminus K$. Since the hypotheses hold for $K'$ in place of $K$, we see that $K'$ is the $(p',q';a)$-unwrapped Berge-Gabai knot associated to the core $\bar K'$ of the surgery solid torus in $\mathcal{Z}_K^0(r')$ where $|Z(K')| = ap'$ 
and $r'$ is the image in the cusp of $\mathcal{Z}_K$ of $r(K')$ under an orientation-preserving homeomorphism $f: \mathcal{Z}_{K'} \to \mathcal{Z}_K$ (c.f. Proposition \ref{prop:same}). Lemma \ref{coprime} implies that $\gcd(p, p') = 1$. This completes the proof of assertion (1).

Next we prove assertion (2). Suppose that $K$ is a $(p,q;a)$-unwrapped Berge-Gabai knot. If $ap = 1$, then $\mathcal{L}(p, q; a) = S^3$. Lemma \ref{coprime} and \cite{KM} show that $K$ has a slope $r$ such that $K(r)$ is a lens space whose fundamental group is non-trivial. This case of assertion (2) then follows from Proposition \ref{prop:commensurable}.

If $ap > 1$, there is a Berge-Gabai knot $\bar K$ in $\mathcal{L}(p,q;a)$ whose inverse image under the universal cover $S^3 \to \mathcal{L}(p,q;a)$ is $K$. Since Berge-Gabai knots in solid tori admit non-trivial cosmetic surgeries, Lemma \ref{coprime} implies that there is a non-trivial slope $r$ of $\bar K$ such that $\bar K(r) \cong \mathcal{L}(p',q';a)$ where $\gcd(p,p') = 1$. This final case of assertion (2) now follows from Proposition \ref{prop:commensurable}.
\end{proof}

We conclude this section with the observation that the characterisation in Proposition \ref{prop:berge-gabai} allows us to show that hyperbolic knot complements with the same volume are not cyclically commensurable. 

\begin{proposition}\label{prop:volume}
Let $K$ be a hyperbolic knot with $|\mathcal{CC}(K)| \geq 2$. Then: \\
$(1)$ the volume of $K$ is different from that of any $K' \in \mathcal{CC}(K) \setminus K$.\\
$(2)$ the only mutant of $K$ contained in $\mathcal{CC}(K)$ is $K$. \\
$(3)$ if $K$ is commensurable with its mirror image, it is amphichiral. 
\end{proposition}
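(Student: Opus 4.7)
The plan is to prove (1) first and then derive (2) and (3) as corollaries.

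\textbf{Part (1).} The key identity is $\mathrm{vol}(S^3\setminus K) = |Z(K)|\cdot\mathrm{vol}(\mathcal{Z}_K)$, valid because $S^3\setminus K\to\mathcal{Z}_K$ is a regular covering of degree $|Z(K)|$, and the analogous identity for $K'$. Proposition \ref{prop:same2} (combined with Mostow--Prasad rigidity) gives $\mathrm{vol}(\mathcal{Z}_K)=\mathrm{vol}(\mathcal{Z}_{K'})$, so $\mathrm{vol}(K)=\mathrm{vol}(K')$ is equivalent to $|Z(K)|=|Z(K')|$. Assuming this equality, Corollary \ref{s=2} excludes $|\Sigma(\mathcal{Z}_K)|=2$, and Proposition \ref{prop:berge-gabai}(1)(c) gives $|Z(K)|=ap$ and $|Z(K')|=ap'$ with $\gcd(p,p')=1$, forcing $p=p'=1$. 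To eliminate this remaining sub-case I would return to the formula $p'=|mp-qw^2|$ appearing in the proof of Lemma \ref{coprime}: with $p=1$ and $q=0$ (the standard convention for $L(1,0)=S^3$), it forces $|m|=1$, and passing through Gabai's classification of cosmetic surgery slopes of $1$-bridge braids in a solid torus either identifies $K$ with $K'$ or contradicts the non-triviality of the surgery together with Theorem \ref{th:slopes}(2).

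\textbf{Part (2).} Mutation preserves hyperbolic volume by a theorem of Ruberman. So any mutant $K^\mu$ of $K$ lying in $\mathcal{CC}(K)$ satisfies $\mathrm{vol}(K^\mu)=\mathrm{vol}(K)$, and (1) forces $K^\mu=K$.

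\textbf{Part (3).} The mirror image $K^*$ has the same hyperbolic volume as $K$. If $K$ is commensurable with $K^*$, then since $|\mathcal{CC}(K)|\geq 2$ situates $K$ in a flexible commensurability class (so $K$ admits no hidden symmetries), Proposition \ref{prop:same} upgrades the commensurability to cyclic commensurability, giving $K^*\in\mathcal{CC}(K)$. Applying (1), $K^*=K$ in $S^3$, so $K$ is amphichiral.

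The main obstacle is the $p=p'=1$ sub-case of (1): the bare coprimality from Proposition \ref{prop:berge-gabai}(1)(c) is insufficient by itself, and one must exploit the explicit formula $p'=|mp-qw^2|$ from the proof of Lemma \ref{coprime} together with Gabai's classification of $1$-bridge braids admitting non-trivial cosmetic surgery to close the argument. Once (1) is established, parts (2) and (3) are short applications of the familiar fact that mutation and mirroring both preserve hyperbolic volume.
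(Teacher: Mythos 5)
Your reduction of part (1) to the statement $|Z(K)|\ne |Z(K')|$, via $\mathrm{vol}(S^3\setminus K)=|Z(K)|\,\mathrm{vol}(\mathcal{Z}_K)$ and the homeomorphism $\mathcal{Z}_K\cong\mathcal{Z}_{K'}$, is exactly the paper's starting point, and your parts (2) and (3) are the paper's (one remark: you do not need to invoke absence of hidden symmetries in (3) --- which does not follow from $|\mathcal{CC}(K)|\ge 2$, that is precisely Conjecture \ref{conj:rigid} --- since the statement already concerns $\mathcal{CC}(K)$). The problems are in how you finish part (1).

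First, you route everything through Proposition \ref{prop:berge-gabai}(1)(c), but that proposition applies only when $K$ is \emph{periodic} with $|\Sigma(\mathcal{Z}_K)|=1$. The case where $Z(K)$ acts freely on $S^3$ (so $\Sigma(\mathcal{Z}_K)=\emptyset$ and $\bar K$ is a primitive knot in a genuine lens space $L(c,d)$ with $c=|Z(K)|$) is not covered, and one cannot expect to force $\bar K$ into the Berge--Gabai solid-torus framework there --- that is essentially the content of the open Problem \ref{berge}. The paper handles this case by a separate homological argument: writing the meridians of $\bar K$ and $\bar K'$ as $c\bar\mu+e\bar\lambda$ and $c'\bar\mu+e'\bar\lambda$ in the basis of $H_1(\partial M)$ in which $\bar\lambda$ is the rational longitude, the cyclic surgery theorem gives $|ce'-ec'|=1$, hence $\gcd(c,c')=1$; the degenerate possibility $c=c'=1$ is then excluded by Gordon--Luecke, since it would produce a non-trivial surgery on a hyperbolic knot in $S^3$ yielding $S^3$. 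Your proposal contains neither step.

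Second, your elimination of the sub-case $p=p'=1$ in the periodic setting is not a proof. The formula $p'=|mp-qw^2|$ with $p=1$, $q=0$ gives $|m|=1$, but you do not exhibit any contradiction from $|m|=1$; Gabai's classification of cosmetic slopes on $1$-bridge braids does not obviously forbid it, and ``either identifies $K$ with $K'$ or contradicts non-triviality'' is an assertion, not an argument. The paper's contradiction comes from a different place: when $p=p'=1$ the underlying space is $S^3$ and the singular locus is the core $C_2$ of the complementary Heegaard solid torus, which is unknotted; Corollary 3.5 of \cite{Gabai3} forces the image of $C_2$ after the non-trivial cosmetic surgery on $\bar K\subset V_1$ to be \emph{knotted} in the resulting $S^3$, contradicting the fact that it must again be the core of a Heegaard solid torus of $|\mathcal{L}(1,q';a)|$. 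Without some argument of this kind (or a genuine computation ruling out $|m|=1$ among Berge--Gabai parameters), part (1) --- and hence (2) and (3) --- remains unproved.
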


This result holds for a hyperbolic knot $K$ without hidden symmetries and any $K' \in \mathcal{C}(K) \setminus K$. 
\begin{proof}

First we prove that if $K' \in \mathcal{CC}(K)$ is distinct from $K$, then the cyclic groups $Z(K)$ and $Z(K')$ have distinct orders. This will imply that $K$ and $K'$ have distinct volumes since  $\hbox{vol}(S^3 \setminus K) = |Z(K)| \hbox{vol}(\mathcal{Z}_K) 
\ne |Z(K')| \hbox{vol}(\mathcal{Z}_K)=\hbox{vol}(S^3 \setminus K')$.

Suppose that $Z(K)$ acts freely on $S^3$. Then $\mathcal{L}_K$ is a lens space of the form $L(c, d)$ where $c = |Z(K)|$. Let $M$ denote the exterior of $\bar K$ in $L(c, d)$ and note that as $\bar K$ is primitive, $H_1(M) \cong \mathbb Z$. Hence there is a basis $\bar \mu, \bar \lambda$ of $H_1(\partial M)$ such that the image of $\bar \mu$ in $H_1(M)$ generates while the image of $\bar \lambda$ is trivial. Clearly, the meridinal slope of $\bar K$ represents $c \bar \mu + e \bar \lambda$ in $H_1(\partial M)$ for some integer $e$.
Similarly $\mathcal{L}_{K'}$ is a lens space $L(c',d')$ where $c' = |Z(K')|$, so the meridinal slope of $\bar K'$ represents $c' \bar \mu + e' \bar \lambda$. The cyclic surgery theorem \cite{CGLS} implies that $\pm 1 = c e' - e c'$, so $\gcd(c, c') = 1$. Note that we cannot have $c = c' = 1$ as otherwise some non-trivial surgery on a hyperbolic knot in $S^3$ would yield $S^3$, contrary to  \cite{GL}. Thus $c \ne c'$, so the proposition holds when $Z(K)$ acts freely on $S^3$.

Suppose next that $Z(K)$ does not act freely on $S^3$. By Proposition \ref{prop:berge-gabai}, $K$ is a $(p,q;a)$-unwrapped Berge-Gabai knot and $K'$ is a $(p',q';a)$-unwrapped Berge-Gabai knot where $p$ and $p'$ are coprime by Lemma \ref{coprime} and $a > 1$. Since $|Z(K)| = ap$ and $|Z(K')| = ap'$, it follows that $|Z(K)|\ne |Z(K')|$ unless $p = p' =1$. Assume $p = p' = 1$. There is a Heegaard splitting $|\mathcal{L}(1,q;a)| = V_1 \cup V_2$ where the singular set of $\mathcal{L}(1,q;a)$ is the core $C_2$ of $V_2$ and a hyperbolic Berge-Gabai knot $\bar K \subset V_1 \subset \mathcal{L}(1,q; a)$ such that $K$ is the inverse image of $\bar K$ in $S^3$. Since $C_2$ is unknotted in $|\mathcal{L}(1,q;a)| \cong S^3$, Corollary 3.5 of \cite{Gabai3} implies that its image is knotted in $|\mathcal{L}(1,q;a)| \cong S^3$. But this contradicts the fact that the image of $C_2$ in $|\mathcal{L}(1,q';a)|$ is the core of a Heegaard solid torus. Hence we cannot have $p = p' = 1$. This completes the proof that $Z(K)$ and $Z(K')$ have distinct orders and therefore that $K$ and $K'$ have distinct volumes.

Since mutant hyperbolic knots have the same volume, $K$ and $K'$ cannot be mutant. Similarly hyperbolic knots which are mirror images of each other have the same volume so as $K' \ne K$, $K'$ cannot be the mirror image of $K$.
\end{proof}

\section{Fibred knots} \label{fibred}


In this section we prove that any hyperbolic knot without hidden symmetries and with $|\mathcal{CC}(K)| \geq 2$ is fibred (Theorem \ref{th:properties}(1)). 

We divide the proof of Theorem \ref{th:properties}(1) into two cases according to whether $K$ is periodic or not.

\subsection{$K$ is periodic} 

Here we prove a fibering theorem for $1$-bridge braid exteriors and apply it to deduce the periodic case of Theorem \ref{th:properties}(1). 

\begin{theorem} \label{th_fibred} 
Let $K$ be a $1$-bridge braid on $n$ strands in a solid torus $V$. For any essential simple closed curve $C$ on $\partial V$ whose algebraic winding number in $V$ is coprime to $n$ there is a locally trivial fibring of the exterior of $K$ in $V$ by surfaces whose intersection with $\partial V$ has $n$ components, each a curve parallel to $C$.  
\end{theorem}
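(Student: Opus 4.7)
Let $M = V \setminus \mathrm{int}\,N(K)$. Fix a basis $(\mu_1, \lambda_1)$ of $H_1(\partial V)$ with $\mu_1$ a meridian of $V$, and let $\mu_0$ be a meridian of $K$. Since $K$ is an $n$-braid in $V$, a meridian disk of $V$ is punctured $n$ times by $K$, so $\mu_1 = n\mu_0$ in $H_1(M)$; a direct calculation then gives $H_1(M;\mathbb Z) \cong \mathbb Z^2$ generated by $\mu_0$ and $\lambda_1$. Writing $C = p\mu_1 + q\lambda_1$, a locally trivial fibration of $M$ whose fibre meets $\partial V$ in exactly $n$ parallel copies of $C$ corresponds to a primitive class $\phi_C \in H^1(M;\mathbb Z)$ with $\phi_C(\mu_0) = q$ and $\phi_C(\lambda_1) = -pn$ (up to sign). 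Note that $\gcd(\phi_C(\mu_0),\phi_C(\lambda_1)) = \gcd(q,pn) = \gcd(q,n)$ because $\gcd(p,q)=1$; thus $\phi_C$ is primitive (equivalently, the fibre is connected) precisely under the hypothesis $\gcd(q,n)=1$. The theorem therefore reduces to showing $\phi_C$ is a fibred class.

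\textbf{Main argument.} The tautological braid projection $V = D^2 \times S^1 \to S^1$ restricts to a locally trivial fibration of $M$ with fibre an $n$-times-punctured disk, realising the primitive class $\phi_0 \in H^1(M;\mathbb Z)$ defined by $\phi_0(\mu_0)=0$ and $\phi_0(\lambda_1)=1$. By Thurston's fibration theorem, $\phi_0$ lies in the open cone over some top-dimensional face of the Thurston norm unit ball $B_T \subset H^1(M;\mathbb R) \cong \mathbb R^2$. Because $\dim H^1(M;\mathbb R)=2$, each top-dimensional face of $B_T$ is a one-dimensional edge. I would next establish Proposition~\ref{fibred faces}: every top-dimensional face of $B_T$ is fibred. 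The strategy is Gabai's sutured manifold theory -- take a Thurston-norm-minimising surface $\Sigma$ representing a class in the open cone over a given top face, decompose $M$ along $\Sigma$, and use the bridge disk of the $1$-bridge braid to produce product disks in the resulting sutured manifold; a sequence of product decompositions then reduces it to $\Sigma \times I$, witnessing $\Sigma$ as a fibre. Once every top-dimensional face of $B_T$ is known to be fibred, it suffices to check that $\phi_C$ lies in the open cone over such a face (and not along a ray through a vertex of $B_T$). This follows from the explicit coordinates $(\phi_C(\mu_0),\phi_C(\lambda_1))=(q,-pn)$ with $\gcd(q,n)=1$, combined with a description of the vertices of $B_T$ extracted from the sutured decomposition: primitivity of $\phi_C$ is exactly what forces its ray to avoid the exceptional vertex directions. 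Thurston's theorem then yields the desired fibration, whose fibre meets $\partial V$ in $n$ parallel copies of $C$ by construction of $\phi_C$.

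\textbf{Main obstacle.} The technical heart of the proof is Proposition~\ref{fibred faces}, the assertion that every top-dimensional face of $B_T$ is fibred. Running Gabai's sutured manifold hierarchy to a product conclusion requires repeatedly locating product disks and ruling out nontrivial taut roots, and the $1$-bridge structure of $K$ in $V$ is what makes this possible. The principal difficulty is controlling the hierarchy so that it terminates in a product rather than in a more complicated taut piece; this relies on the restricted combinatorics of $1$-bridge braids in solid tori furnished by Gabai's classification, together with the fact that the bridge disk supplies a canonical source of product disks in the sutured decomposition of $M$.
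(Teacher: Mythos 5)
Your reduction of the statement to the assertion that $\phi_C$ is a fibred class is correct (including the computation $\gcd(q,pn)=\gcd(q,n)$ and the count of $n$ boundary components), but the route you propose from there has two genuine gaps. First, and most seriously, even granting that every top-dimensional face of the Thurston norm ball is fibred, you must still show that the ray through $\phi_C$ does not pass through a vertex. Your justification --- that ``primitivity of $\phi_C$ is exactly what forces its ray to avoid the exceptional vertex directions'' --- does not work: primitivity is a property of an integral class, not of a ray, and every rational vertex direction contains a primitive integral class. To make this step honest you would have to determine the actual vertex directions of the norm ball and show they occur only among the rays $(q,-pn)$ with $\gcd(q,n)>1$; but that is essentially equivalent to the theorem being proved, so as written the argument is circular at precisely the point where the hypothesis $\gcd(q,n)=1$ has to enter. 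Second, your proof of Proposition~\ref{fibred faces} is only a strategy (``use the bridge disk to produce product disks and run the hierarchy to a product''), and you yourself flag termination of the hierarchy as the main obstacle without resolving it. Note also that Proposition~\ref{fibred faces} is stated for \emph{hyperbolic} $1$-bridge braids, whereas Theorem~\ref{th_fibred} has no such hypothesis; a Thurston-norm argument needs the norm to be nondegenerate, which is not automatic for non-hyperbolic $K$.

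It is worth knowing that the paper's logic runs in the opposite direction: Proposition~\ref{fibred faces} is \emph{deduced from} Theorem~\ref{th_fibred} (the fibred classes $nq\,\xi_1+p\,\xi_2$ produced by the theorem are shown to be projectively dense), so it cannot be used as an ingredient here without an independent proof. The paper instead proves the theorem directly and elementarily: the exterior $M$ is a genus two handlebody with one $2$-handle attached, so $\pi_1(M)=\langle x,y : R\rangle$ is a two-generator one-relator group with an explicit cyclically reduced relator $R=ywxy^{-1}w^{-1}x^{-1}$ encoding the braid permutation; one then defines $\varphi:\pi_1(M)\to\mathbb Z$ by $\varphi(x)=pt-nq$, $\varphi(y)=pt-nq+p$, verifies via the $n$-cycle structure of the permutation that the values of $\varphi$ on the initial segments of $R$ attain their maximum and minimum exactly once (Brown's criterion for finite generation of $\ker\varphi$), and concludes by Stallings' fibration theorem. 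This is where the coprimality hypothesis is genuinely used, and it handles all $1$-bridge braids uniformly. If you want to salvage your approach, you would need either to carry out the sutured hierarchy argument in full and separately compute the vertices of the norm ball, or to replace the vertex-avoidance step by a direct fibredness certificate for $\phi_C$ itself --- at which point you are back to something like the paper's argument.
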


\begin{corollary} \label{cor:ubg periodic} 
An unwrapped Berge-Gabai knot is a fibred knot.
\end{corollary}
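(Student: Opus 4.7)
The plan is to construct a fibring of $\mathcal{L}(p,q;a)\setminus\bar K$ over $S^1$ as an orbifold, then lift it through the universal cover $\pi\colon S^3\to\mathcal{L}(p,q;a)$ to fibre $S^3\setminus K$. By the definition of a $(p,q;a)$-unwrapped Berge--Gabai knot, $K=\pi^{-1}(\bar K)$ where $\bar K$ is a Berge--Gabai knot of braid index $n$ lying in the non-singular Heegaard solid torus $V_1\subset\mathcal{L}(p,q;a)$, with $\gcd(n,ap)=1$ (the coprimality precisely ensuring that $\pi^{-1}(\bar K)$ is connected, as noted after the definition of unwrapped Berge--Gabai knot). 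Write $V_2$ for the other Heegaard solid torus, whose core carries $\Sigma(\mathcal{L}(p,q;a))$ with ramification index $a$, and let $\mu_2$ denote its meridian.

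The key input is Theorem~\ref{th_fibred} applied to $\bar K\subset V_1$ with $C=\mu_2$. The algebraic winding number of $\mu_2$ in $V_1$ equals $p$ (this is just the defining property of $L(p,q)$: the meridian of $V_2$ winds $p$ times around a longitude of $V_1$), and $\gcd(p,n)$ divides $\gcd(ap,n)=1$, so the coprimality hypothesis of Theorem~\ref{th_fibred} holds. The theorem therefore produces a locally trivial fibring of $V_1\setminus\bar K$ over $S^1$ whose restriction to $\partial V_1$ is $n$ parallel copies of $\mu_2$.

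To extend across $V_2$, I note that as an orbi-solid torus $V_2$ admits a fibring over $S^1$ whose generic fibre is $n$ disjoint meridian orbi-disks, each carrying a single cone point of order $a$ on the core. The boundary fibration on $\partial V_2=\partial V_1$ is again $n$ parallel copies of $\mu_2$, so it matches the boundary data from Theorem~\ref{th_fibred}, and the two fibrings glue to a locally trivial orbifold fibring $\mathcal{L}(p,q;a)\setminus\bar K\to S^1$. Composing with $\pi$ yields the desired fibring of $S^3\setminus K$: away from the branch locus this is automatic because $\pi$ is an honest cover there, and at a ramification point the local model $z\mapsto z^a$ transverse to a fibre shows that each orbi-disk fibre lifts to $p$ honest meridian disks of $\pi^{-1}(V_2)$, so local triviality persists.

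The main subtlety is the final lifting step. An arbitrary orbifold fibring of $\mathcal{L}(p,q;a)\setminus\bar K$ need not pull back to a genuine manifold fibration through the ramified cover $\pi$; it does here because we deliberately chose the fibring of $V_2$ so that the fibres meet the singular locus transversely through cone points of order $a$, which is exactly the compatibility that the local branching model for $\pi$ requires in order for the pullback to remain smooth and locally trivial.
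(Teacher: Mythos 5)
Your proposal is correct and follows essentially the same route as the paper's proof: apply Theorem \ref{th_fibred} to $\bar K\subset V_1$ with $C$ the meridian of $V_2$ (using $\gcd(p,n)=1$, which follows from $\gcd(ap,n)=1$), extend the fibration across $V_2$ transversely to $\Sigma(\mathcal{L}(p,q;a))$, and lift through the universal cover $S^3\to\mathcal{L}(p,q;a)$. The only difference is that you spell out the local branching model $z\mapsto z^a$ justifying the final lifting step, which the paper leaves implicit.
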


\begin{proof}[Proof of Corollary \ref{cor:ubg periodic}] 
Let $K$ be an unwrapped Berge-Gabai knot in $S^3$. Then $K$ is the inverse image in $S^3$ of a Berge-Gabai knot $\bar K \subset \mathcal{L}(p,q; a)$ of winding number $n$, say, under the universal cover $S^3 \to \mathcal{L}(p,q; a)$. Thus there is a genus one Heegaard splitting $V_1 \cup V_2$ of $|\mathcal{L}(p,q; a)|$ such that $\bar K$ is a Berge-Gabai knot of winding number $n$ in $V_1$ and $\Sigma(\mathcal{L}(p,q;a))$ is a closed submanifold of the core $C_2$ of $V_2$. As $|\mathcal{L}(p,q;a)| = L(p, q)$, the algebraic intersection number of a meridian curve of $V_1$ with one of $V_2$ is $\pm p$. By definition, $\gcd(p, n) = 1$, so Theorem \ref{th_fibred} implies that there is a locally trivial fibring of the exterior of $\bar K$  by surfaces which intersect $\partial V$ in curves parallel to the meridian of $V_2$. Therefore we can extend the fibration over the exterior of $K$ in $L(p,q) = |\mathcal{L}(p,q; a)|$ in such a way that it is everywhere transverse to $\Sigma(\mathcal{L}(p,q;a))$. Hence the fibration lifts to a fibring of the exterior of $K$. \end{proof}

\begin{proof}[Proof of Theorem \ref{th_fibred}]

Let $K$ be the closed $1\mbox{-bridge}$ braid contained in the interior of a solid torus $V$ determined by the three parameters: 
\begin{itemize}
\vspace{-.1cm} \item $n$, the braid index of $K$; 

\item $b$, the bridge index of $K$;

\item $t$, the twisting number of $K$.
\end{itemize}
\vspace{-.1cm} See \cite{Gabai3} for an explanation of these parameters and Figure \ref{fig_nodi} for an example. (Our conventions differ from those of \cite{Gabai3} by mirroring and changing orientation. This modification is convenient for presenting the knot's fundamental group.)

\begin{figure}[!ht]
\centerline{\includegraphics{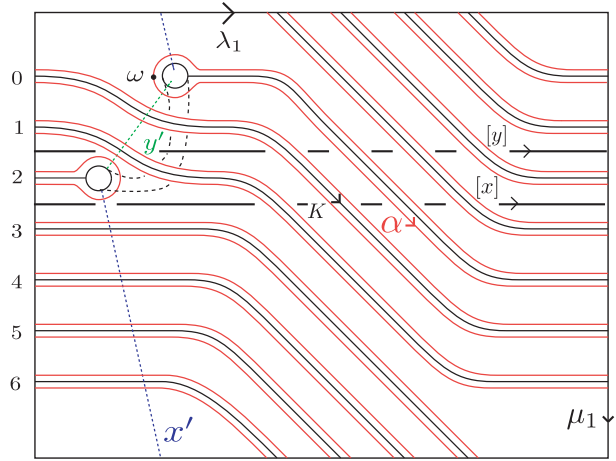}} 
\caption{{\footnotesize The Fintushel-Stern knot ($n=7,$ $b=2,$ $t=4$). 
The curve $x'$ is obtained from the arc labeled $x'$ by closing it in the boundary of the tunnel with an arc parallel to the bridge and $y'$ is obtained similarly by closing the arc $y'$ in the boundary of the tunnel. Here $R$ is: $y\ x\ y\ x\  x\ y\  x\ x\ y^{-1}x^{-1}y^{-1}x^{-1}x^{-1}y^{-1}x^{-1}x^{-1}$.}}

\label{fig_nodi} 
\end{figure}

Number the braid's strands successively $\overline 0$ to $\overline {n-1}$ and let $\sigma_i$ denote the $i^{th}$ elementary braid in which the $i^{th}$ strand passes over the $(i+1)^{st}$. The braid associated to $K$ has the following form: $\beta(K)=\sigma_{b-1}\cdots \sigma_0\delta^t$
where $\delta=\sigma_{n-2}\cdots\sigma_0$ is the positive $2\pi/n$ twist. Denote by $\pi$ the permutation of
$\mathbb{Z}/n$ determined by $\beta(K)$. It has the following simple form:
\begin{equation}
  \pi(\overline a)=\left\{
  \begin{array}{ll}
    \overline {a+t+1} & \mbox{if $0\leq a <b$}\\
    \overline t & \mbox{if $a=b$}\\
    \overline {a+t}& \mbox{if $b < a <n$}\\
  \end{array}
  \right.
  \label{eq:permutarea}
\end{equation}
for some $a \in \bar a$. As $K$ is a knot, $\pi$ is an $n$-cycle. 

Let $T_1=\partial V$ and $T_2 = \partial N(K)$ the boundary of a closed tubular neighborhood of $K$ in $\hbox{int}(V)$. There is a meridian class $\mu_1 \in H_1(T_1)$ well-defined up to $\pm 1$ and represented by the boundary of a meridian disk of $V_1$. Let $\lambda_1 \in H_1(T_1)$ be any class which forms a basis of $H_1(T_1)$ with $\mu_1$. Then $\lambda_1$ generates $H_1(V)$.   

Let $M$ denote the exterior of $K$ in $V$ and fix an essential simple closed curve $C$ on $\partial V$. We are clearly done if $C$ is a meridian curve of $V$, so assume that this is not the case. Then we can orient $C$ and find coprime integers $p \geq 1, q$ so that
$$[C] = q \mu_1 + p \lambda_1 \in H_1(T_1)$$ 
Note that $p$ is the algebraic winding number of $C$ in $V$. Assuming that $\gcd(p, n) = 1$ we must show that there is a locally trivial fibring of $M$ by surfaces which intersect $\partial V$ in curves parallel to $C$. The tools we use to prove this are Brown's theorem \cite{Br} and Stallings' fibration criterion \cite{Sta2}. See also \cite{OzSzLS} where a similar argument is invoked; our proof is only slightly more involved.
Brown's theorem gives necessary and sufficient conditions under which a homomorphism from a two-generator one-relator group
to $\mathbb{Z}$ has finitely generated kernel and Stallings' theorem produces a fibration of a $3$-manifold given such a homomorphism of its fundamental group.  More precisely:

\begin{theorem} {\rm (Theorem 4.3 and Proposition 3.1 of \cite{Br})} \label{Brown's Theorem} 
Let $G = \langle x,y : R \rangle$ be a two-generator one-relator group with $R = R_1R_2 \ldots R_m$,  $R_i \in \{x, x^{-1}, y, y^{-1}\}$, a cyclically
reduced and non-trivial relator. Let $S_1,\dots S_m$ be the proper initial segments of the relator $R$, i.e. $S_i=R_1\dots R_{i-1}$. 
Finally let $\varphi:G\to \mathbb{R}$ be a non-zero homomorphism. If $\varphi(x) \neq 0$ and $\varphi(y) \neq 0,$ then $\ker(\varphi)$ is finitely 
generated if and only if the sequence $\{\varphi(S_i)\}_{i=1}^m$ assumes its maximum and minimum values exactly once.
\end{theorem}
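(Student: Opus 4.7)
The plan is to prove Brown's theorem via a Morse-theoretic analysis of the presentation $2$-complex, in the spirit of the proofs used to compute the Bieri-Neumann-Strebel invariant. Let $X$ denote the presentation $2$-complex with one vertex $*$, two oriented loops labeled $x$ and $y$, and a single $2$-cell $e$ attached along the word $R$, so that $\pi_1(X,*) = G$. Let $\tilde X \to X$ be the universal cover. Extend $\varphi$ to a $G$-equivariant continuous map $\tilde\varphi \colon \tilde X \to \mathbb{R}$ by setting $\tilde\varphi(g \cdot \tilde *) = \varphi(g)$ on vertices, extending affinely over each $1$-cell, and extending over each $2$-cell by coning. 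The hypothesis $\varphi(x), \varphi(y) \neq 0$ guarantees that no $1$-cell of $\tilde X$ is horizontal, so $\tilde\varphi$ is a genuine Morse-type function on the $1$-skeleton.

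First I would reduce finite generation of $\ker\varphi$ to a connectivity statement on $\tilde X$. A standard Bieri-Strebel argument, which constitutes Proposition 3.1 of Brown's paper, shows that $\ker\varphi$ is finitely generated if and only if there exists $d \geq 0$ such that every pair of vertices in $\tilde\varphi^{-1}([0,\infty))$ can be connected by a path in $\tilde\varphi^{-1}([-d,\infty))$, and symmetrically for $\tilde\varphi^{-1}((-\infty,0])$. This reduction works for any $2$-complex presentation, not just one-relator ones, but for one-relator groups it becomes tractable because there is essentially only one type of $2$-cell to analyze.

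The combinatorial core of the argument is then to read off the contribution of the $2$-cells. Around the boundary of any lift $\tilde e$ of $e$, the vertices attain precisely the heights $\varphi(S_1) + c, \ldots, \varphi(S_m) + c$ for a constant $c$ depending on the chosen starting corner. When the minimum of $\{\varphi(S_i)\}$ is attained at a unique index $i_0$, the two boundary subarcs of $\partial \tilde e$ emanating from the corresponding vertex both stay strictly above the minimum, so $\tilde e$ provides a ``bridge'' that reroutes $1$-skeleton paths around the valley at that vertex while remaining in an upper half-space. Translating $\tilde e$ by elements of $G$ shows that the upper half-space $\tilde\varphi^{-1}([-d,\infty))$ is connected for appropriate $d$. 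A symmetric argument on the unique maximum handles the lower half-space.

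The main obstacle, and the place where the proof requires the most care, is the converse: converting failure of uniqueness of an extremum into an actual obstruction to finite generation. If, say, the minimum is attained at two indices $i_0 \ne i_1$, one must show that the $2$-cells of $\tilde X$ cannot collectively repair the disconnection of $\tilde\varphi^{-1}([-d,\infty))$ at arbitrarily negative heights. The key input here is that $\tilde X$ is aspherical, which for a one-relator presentation with cyclically reduced relator follows from Lyndon's identity theorem. Asphericity implies that any $2$-disk in $\tilde X$ is combinatorially a van Kampen diagram built from translates of $\tilde e$, so every path detour in $\tilde X$ is obtained by concatenating detours around individual $2$-cells. Since each such individual detour fails at the pinch level, one produces infinite descending chains of generators needed to express $\ker\varphi$, contradicting finite generation. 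The asphericity step and the bookkeeping of nested van Kampen diagrams are where the real work lies.
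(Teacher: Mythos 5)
The paper offers no proof of this statement --- it is imported verbatim from Brown (Theorem 4.3 and Proposition 3.1 of \cite{Br}), and only the ``if'' direction (unique extrema imply $\ker\varphi$ finitely generated) is ever used, as input to Stallings' fibration criterion. So there is no internal argument to compare against; what you propose is a genuinely different route from Brown's own, which runs through valuations and actions on $\mathbb{R}$-trees rather than connectivity of level sets in the presentation $2$-complex. Your forward direction is essentially workable: in the two-generator case there is, up to inversion and backtracking, only one type of ``valley corner'' in the $1$-skeleton, and the unique minimum corner of $R$ (read forwards or backwards) necessarily realizes it, so the $2$-cell does supply the needed detour. What you still owe there is (i) the termination/uniformity bookkeeping for the push-up process, which is delicate when $\varphi(x)/\varphi(y)$ is irrational since vertex heights then form a dense subgroup of $\mathbb{R}$, and (ii) a justification that finite generation of $\ker\varphi$ reduces to connectivity for the two characters $\pm\varphi$ alone --- when $\varphi$ has rank-two image the Bieri--Neumann--Strebel criterion a priori involves a whole circle of characters vanishing on $\ker\varphi$, and this is exactly the content of Brown's Proposition 3.1 rather than something free.

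The genuine gap is in the converse. First, the appeal to asphericity via Lyndon's identity theorem is unavailable precisely when $R$ is a proper power: one-relator presentations with torsion (e.g.\ $R=[x,y]^2$) are not aspherical, yet they satisfy the hypotheses of the statement (and are exactly the cases where every value of $\{\varphi(S_i)\}$ repeats, so the theorem asserts $\ker\varphi$ is never finitely generated --- a claim your argument cannot reach). Second, even granting asphericity, the step from ``every elementary detour around a single $2$-cell fails to rise above the doubled minimum'' to ``$\ker\varphi$ is not finitely generated'' is the hard implication, and it is not established by observing that van Kampen diagrams are assembled from single cells: nothing you have written rules out that a long concatenation of moves, none of which individually repairs the disconnection at the pinch level, does so in aggregate, and in any case disconnection of a superlevel set must still be converted into an explicit infinite-generation phenomenon for the kernel. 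Brown does this by extracting from the repeated extremum a suitable action of $G$ on a tree (equivalently, an HNN/amalgam structure over level sets) certifying $[\varphi]\notin\Sigma$; some construction of that kind, or an explicit infinite independent family in $H_1(\ker\varphi)$, has to replace your final paragraph.
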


It is easy to see that the exterior $M$ of $K$ is homeomorphic to a genus $2$ handlebody with a $2$-handle attached to it. Start with a solid torus $U' \subset \hbox{int}(V)$ obtained by removing a small open collar of $T_1$ in $V$. Denote $\partial U'$ by $T_3$. As $K$ is $1$-bridge, it can be isotoped into $U'$ so that the bridge is a properly embedded arc and its complement, $\gamma$ say, is contained in $T_3$. Fix a disk neighborhood $D \subset T_3$ of $\gamma$ and let $\alpha = \partial D$. Let $U$ be the exterior  of the bridge in $U'$, a genus two handlebody. We can assume that $T_3 \setminus \partial U \subset \hbox{int}(D)$ and therefore $\alpha \subset \partial U$. By construction, $\alpha$ bounds a $2$-disk properly embedded in $\overline{V \setminus U}$ (i.e. a copy of $D$ isotoped rel $\partial D$ into $\overline{V \setminus U}$). It is easy to see that $M$ is a regular neighborhood of the union of $U$ and this disk.  

The fundamental group of $U$ is free on two generators $x, y$ represented by two curves in $T_3$ representing $\lambda_1$. (See Figure \ref{fig_nodi}.)  There are a pair of dual curves $x', y' \subset \partial U$ to these generators. This means that 
\begin{itemize}
\vspace{-.1cm} \item $x'$ and $y'$ bound disks in $U$; 

\item $x$ intersects $x'$ transversely in one point and is disjoint from $y'$;

\item $y$ intersects $y'$ transversely in one point and is disjoint from $x'$.
\end{itemize}
\vspace{-.1cm}  See Figure \ref{fig_nodi}. The word $R \in \pi_1(U)$ in $x, y$ represented by the curve $\alpha$ can be read off in the usual way: each signed intersection of $\alpha$ with $x'$, resp. $y'$, contributes $x^{\pm 1}$, resp. $y^{\pm 1}$,  while traveling around $\alpha$. 

We introduce the auxiliary function
$f:\mathbb Z /n \setminus\{\bar b\} \to \{x,y\}$ given by:

\begin{equation}
  f(\bar a)=\left\{
  \begin{array}{ll}
    y & \mbox{if $0\leq a <b$}\\
    x & \mbox{if $b < a <n$}\\
  \end{array}
  \right.
  \label{eq:functia_f}
\end{equation}

for some $a \in \bar a$. Let $w_j = f(\pi^j(\bar b))$ and consider the word $w = w_1 w_2 \dots w_{n-1} $. Then $R = ywxy^{-1}w^{-1}x^{-1} $. To see this, start with $y$ from the base point $\omega$ (c.f. Figure \ref{fig_nodi}); then follow the knot until the $b$ strand, which contributes $w$; then turn at the lower foot of 
the handle, which contributes $xy^{-1}$; then walk along the knot in the opposite direction until the strand $b$ is reached, which contributes  $w^{-1}$;  then close by passing $x'$, which contributes to the final $x^{-1}$. Notice that $R$ is cyclically reduced. It follows that
$$\pi_1(M) = \langle x,y : ywxy^{-1}w^{-1}x^{-1} \rangle$$

Let $\mu_2 \in H_1(T_2)$ be a meridinal class of $K$. The reader will verify that we can choose the longitudinal class $\lambda_1$ for $V$, a longitudinal class $\lambda_2 \in H_1(T_2)$ for $K$, and possibly replace $\mu_1$ by $-\mu_1$ so that in $H_1(M)$: 
\begin{itemize}
\vspace{-.1cm} \item $n \lambda_1 = \lambda_2$; 

\item $\mu_1 = n \mu_2$; 

\item $[yx^{-1}]=\mu_2$ (i.e. $[yx^{-1}]$ is represented by a meridian of $K$ at the bridge); 

\item $\lambda_1 + t \mu_2 = [x]$ (i.e. $\lambda_1$ and $[x]$ co-bound an annulus in $V$ which $K$ punctures $t$ times).
\end{itemize}
\vspace{-.1cm}
Consider the homomorphism $\pi_1(U) \to \mathbb Z$ which sends $x$ to $pt-nq \ne 0$ and $y$ to $pt-nq+p \ne 0$. Since the exponent sum of both $x$ and $y$ in $R$ is zero, it induces a homomorphism $\varphi: \pi_1(M) \to \mathbb Z$. Since $\gcd(p, nq) = 1$, $\varphi$ is surjective. From the above, it can then be verified that $\varphi(\lambda_1) = -nq$ and $\varphi(\mu_1) = np$. Hence $\varphi(\mu_1^q \lambda_1^p)=0$. 
 
\begin{lemma}\label{lm_fg} Let $S_1, S_2, \ldots, S_{2n+2}$ be the proper initial segments of $R = ywxy^{-1}w^{-1}x^{-1} = R_1R_2 \ldots R_{2n+2}$ where $R_i \in \{x, x^{-1}, y, y^{-1}\}$. Then the sequence $\{\varphi(S_i)\}_{i=1}^{2n+2}$ achieves its maximum and minimum values exactly once.
\end{lemma}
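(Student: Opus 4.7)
The plan is to compute the partial sums $\sigma_j := \varphi(S_{j+1})$ for $j = 0, 1, \ldots, 2n+1$ and to show that both the maximum and the minimum are attained exactly once. Set $A := \varphi(x) = pt - nq$ and $B := \varphi(y) = A + p$, so that $B - A = p > 0$ and both $A, B$ are nonzero by hypothesis. The first step is to establish the reflection identity
\[
\sigma_{2n+2-j} \;=\; \sigma_j - p \qquad (j = 1, 2, \ldots, n),
\]
which follows by direct telescoping from $\varphi(R) = 0$ and the shape $R = y \cdot w \cdot x \cdot y^{-1} \cdot w^{-1} \cdot x^{-1}$: indeed, $\sigma_j = B + \sum_{k=1}^{j-1}\varphi(w_k)$ while $\sigma_{2n+2-j} = A + \sum_{k=1}^{j-1}\varphi(w_k)$ after cancelling the contributions of the tail. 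Because $p > 0$, this identity forces the values at positions $n+2, \ldots, 2n+1$ to be strictly smaller than the values at the reflected positions $n, n-1, \ldots, 1$. Consequently the global maximum must occur in the range $\{0, 1, \ldots, n+1\}$ and the global minimum in the range $\{0\} \cup \{n+1, n+2, \ldots, 2n+1\}$, so uniqueness of the extrema reduces to uniqueness within each of these sub-ranges.

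The second step is a mod-$p$ rigidity argument. For $j \in \{1, \ldots, n+1\}$ every increment $\sigma_j - \sigma_{j-1}$ equals $A$ or $B = A+p$, so $\sigma_j \equiv jA \pmod{p}$ for all $j \in \{0, 1, \ldots, n+1\}$. The hypothesis $\gcd(p, nq) = 1$ gives $\gcd(A, p) = 1$, hence $j \mapsto jA \bmod p$ is injective on residues modulo $p$; thus any equality $\sigma_{j_1} = \sigma_{j_2}$ with $j_1 < j_2$ in $\{0, \ldots, n+1\}$ requires $p \mid (j_2 - j_1)$. A direct count then shows that such a collision forces the intermediate letters to contain exactly $(j_2 - j_1)|A|/p$ copies of $y$, which demands $|A| \leq p$; equivalently, $A$ and $B$ have opposite signs. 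In particular, if $A$ and $B$ share a sign, the sequence $\sigma_0, \ldots, \sigma_{n+1}$ is strictly monotone (all increments of the same sign), so its max and min are uniquely attained at the endpoints; combined with $\sigma_{2n+1} = A \neq 0$ (which rules out a tie with $\sigma_0 = 0$), this settles both easy cases.

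The remaining case $A < 0 < B$ is the main obstacle, since the walk genuinely oscillates and collisions at distances equal to multiples of $p$ are not excluded by the mod-$p$ argument alone. To handle it I would exploit the structural input that $w_k = y$ exactly when $\pi^k(\bar b) \in \{0, \ldots, b-1\}$ and that $\pi$ is an $n$-cycle, so the orbit $\pi(\bar b), \pi^2(\bar b), \ldots, \pi^{n-1}(\bar b)$ enumerates $\mathbb{Z}/n \setminus \{\bar b\}$ exactly once; this forces a tightly constrained distribution of $y$-steps inside $w$ governed by $t$ and $b$. In this regime the maximum of $\sigma_j$ is attained only at positions immediately following a $y$-step and preceding an $x$-step, and the plan is to show that if two such positions $j_1 < j_2$ both carried the maximum, then the sub-window $R_{j_1+1}\cdots R_{j_2}$ would be forced to contain $y$'s and $x$'s in the exact ratio $|A| : p - |A|$ over $mp$ consecutive letters, which is incompatible with the way the $n$-cycle $\pi$ distributes its $b$ iterates landing in $\{0, \ldots, b-1\}$ across any such window. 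The minimum is then treated symmetrically via the reflection identity from the first step. I expect this last combinatorial verification in the mixed-sign case to be the most delicate part of the proof.
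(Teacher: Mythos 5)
Your first two steps are sound and parallel the paper's own localization: the reflection identity $\varphi(S_{2n+3-j}) = \varphi(S_{j+1}) - p$ is correct, it confines the maximum to the first half and the minimum to the second half, and the same-sign case does follow by monotonicity. But the proof is not complete: the mixed-sign case $\varphi(x) < 0 < \varphi(y)$, which is the entire content of the lemma, is left as a plan rather than an argument. Your mod-$p$ reduction ($\sigma_j \equiv jA \pmod p$ with $\gcd(A,p)=1$) only shows that a collision $\sigma_{j_1}=\sigma_{j_2}$ forces $p \mid (j_2-j_1)$; it does not exclude such collisions, and the proposed combinatorial follow-up (counting $y$'s in windows of length a multiple of $p$ and comparing with how the $n$-cycle $\pi$ distributes its iterates in $\{0,\dots,b-1\}$) is neither carried out nor obviously true. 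You have chosen the wrong modulus: $p$ is not the number that interacts with the braid combinatorics.

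The paper's resolution is to reduce mod $n$ instead. Since $\gcd(p,n)=1$ one can form $\hat\varphi = \overline{p}^{\,-1}\overline{\varphi}:\pi_1(M)\to\mathbb Z/n$, and then $\hat\varphi(x)=\overline t$, $\hat\varphi(y)=\overline{t+1}$, so that $\hat\varphi(f(\overline a)) = \pi(\overline a)-\overline a$ for every $\overline a \ne \overline b$. Consequently the partial sums over the subword $w$ telescope:
$$\hat\varphi(R_{l+1})+\cdots+\hat\varphi(R_r) \;=\; \pi^{r}(\overline b)-\pi^{l}(\overline b),$$
which is nonzero for $1\le l<r\le n$ precisely because $\pi$ is an $n$-cycle. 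Hence the candidate values for the maximum are pairwise distinct modulo $n$, so pairwise distinct, with no sign analysis or counting of $y$'s required; the minimum is handled identically on the reflected range. This is the key idea missing from your proposal: the homomorphism $\varphi$ was engineered (via the choice $\varphi(x)=pt-nq$, $\varphi(y)=pt-nq+p$) exactly so that, after dividing by $p$ mod $n$, the letter increments become the displacements of the permutation $\pi$. Without this (or an equivalent completed argument for the oscillating case), the lemma is not proved.
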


\begin{proof}

By construction, $\varphi(x) \neq 0$, $\varphi(y) \neq 0$, and $\varphi(y)>\varphi(x)$. The conclusion of the lemma is easily seen to hold when $\varphi(x)$ and $\varphi(y)$ have the same sign, so assume that $\varphi(x) < 0 < \varphi(y)$. 

Set $S = \max\{ \varphi(S_i): 1 \leq i \leq 2n+2 \}$ and $s = \min\{ \varphi(S_i): 1 \leq i \leq 2n+2 \}$. 

Since $\varphi(x) < 0 < \varphi(y)$ we have
\begin{equation}
\left\{
\begin{array}{l}
s \leq \varphi(S_{n+2}) < \varphi(S_{n+1}) < \varphi(S_n) \leq S \\ \\
s \leq \varphi(S_{n +i}) = \varphi(S_{n - i + 2}) + \varphi(x) - \varphi(y) < \varphi(S_{n - i + 2}) \leq S \hbox{ for } 3 \leq i \leq n + 1 \\ \\
s \leq \varphi(S_{2n+1}) = \varphi(S_{2n+2}) + \varphi(x) <  \varphi(S_{2n+2}) = 0 < \varphi(y) = \varphi(S_1) \leq S 
\end{array} \right.
\end{equation}
Thus the maxima of $\{\varphi(S_i)\}_{i=1}^{2n+2}$ can only occur in the sequence $\varphi(S_1), \varphi(S_2), \ldots , \varphi(S_n)$ and the minima in $\varphi(S_{n+2}), \varphi(S_{n+3}), \ldots , \varphi(S_{2n+1})$. 

We look at the maxima of $\{\varphi(S_i)\}_{i=1}^{2n+2}$ first. Suppose that $1 \leq l < r \leq n$. We claim that $\varphi(R_{l+1})+\dots + \varphi(R_r) \not \equiv 0 \hbox{ (mod $n$)}$. If so, $\varphi(S_l) \ne  \varphi(S_r)$ and therefore $S$ occurs precisely once amongst the values $\{\varphi(S_i)\}_{i=1}^{n}$.

Let $\overline{\varphi}$ be the reduction of $\varphi$ modulo $n$. Since $\gcd(p,n)=1$, we can define 
$$\hat \varphi = \overline p^{-1} \overline{\varphi}: \pi_1(M)\to \mathbb{Z}/n$$
Then $\hat\varphi(x) = \overline t$ and $\hat\varphi(y) = \overline{t+1}$ and therefore 
$$\hat\varphi(f(\overline a)) = \pi(\overline a)-\overline a$$ 
for all $a \in \mathbb{Z}/n \setminus\{\overline b\}$. Hence 
$\hat\varphi(R_{l+1}) + \dots + \hat\varphi(R_r) 
= \hat\varphi(w_l) + \dots + \hat\varphi(w_{r-1}) 
= \hat\varphi(f(\pi^l(\overline b))) + \dots + \hat\varphi(f(\pi^{r-1}(\overline b))) 
= (\pi^{l+1}(\overline b) - \pi^{l}(\overline b)) + \dots + (\pi^{r}(\overline b )- \pi^{r-1}(\overline b))
= \pi^{r}(\overline b) - \pi^{l}(\overline b)$. 
Since $\pi$ is an $n$-cycle and $1 \leq l < r \leq n$ we see that $\pi^r(\overline b) \not= \pi^l(\overline b)$. 
It follows that $\varphi(R_{l+1})+\dots + \varphi(R_r) \not \equiv 0 \hbox{ (mod $n$)}$.

The uniqueness of the minimum follows along the same lines. We saw above that the minima of $\{\varphi(S_i)\}_{i=1}^{2n+2}$ only occur in $\varphi(S_{n+2}), \varphi(S_{n+3}), \ldots , \varphi(S_{2n+1})$. As before, $\varphi(R_{l+1})+\dots + \varphi(R_r) \not \equiv 0 \hbox{ (mod $n$)}$ for all $n+2 \leq l < r \leq 2n + 1$ and therefore $\varphi(S_{n+2}), \varphi(S_{n+3}), \ldots , \varphi(S_{2n+1})$ are pairwise distinct. This implies the desired conclusion. 
\end{proof}

We can now complete the proof of Theorem \ref{th_fibred}. The previous lemma couples with Theorem \ref{Brown's Theorem} to show that the kernel of $\varphi$ is finitely generated. Stallings' fibration criterion \cite{Sta2} implies that $M$ admits a locally trivial surface fibration with fibre $F$ 
such that $\pi_1(F) = \ker(\varphi)$. Since $\varphi(\mu_1) = np \ne 0$ while $\varphi(\mu_1^q\lambda_1^p) = 0$, 
$\ker(\varphi |_{\pi_1(T_1)})$ is the infinite cyclic subgroup of $\pi_1(T_1)$ generated by $[C]$. Hence the fibration meets $T_1$ in curves parallel to $C$. To complete the proof, we must show that the intersection of a fibre $F$ with $T_1$ has $n$ components. 

To that end, note that as $\varphi$ is surjective we can orient $F$ so that for each $\zeta \in H_1(M)$ we have $\varphi(\zeta) = \zeta \cdot [F]$. Let $\phi_1 \in H_1(M)$ be the class represented by the cycle $F \cap T_1$ with the induced orientation. Clearly, $\phi_1 = \pm |F \cap T_1| [C]$. Since $\varphi(\lambda_1) = -nq$ and $\varphi(\mu_1) = np$, $\varphi(\pi_1(T_1)) = n \mathbb Z$. Thus if $\zeta \in H_1(M)$ is represented by a dual cycle to $[C]$ on $T_1$, then
$$n = \varphi(\zeta) = \zeta \cdot [F] = |\zeta \cdot \phi_1| = ||F \cap T_1| \zeta \cdot [C]| = |F \cap T_1|$$
This completes the proof.  
\end{proof}

\begin{proof}[Proof of Proposition \ref{fibred faces}] 
Let $K$ be a hyperbolic $1$-bridge braid on $n$ strands in a solid torus $V$. We use the notation developed in the proof of Theorem \ref{th_fibred}. In particular, $M$ is the exterior of $K$ in $V$ and $H_1(M) \cong \mathbb Z \oplus \mathbb Z$ with basis $\lambda_1, \mu_2$. By construction there are classes $\xi_1, \xi_2 \in H_2(M, \partial M)$ such that if $\partial: H_2(M, \partial M) \to H_1(\partial M)$ is the connecting homomorphism, then $\partial \xi_1 = \mu_1 - n \mu_2$ and $\partial \xi_2 = n \lambda_1 - \lambda_2$. Since $|\lambda_1 \cdot \xi_j| = \delta_{1j}$ and $|\mu_2 \cdot \xi_j| = \delta_{2j}$, $\{\xi_1, \xi_2\}$ is a basis for $H_2(M, \partial M) \cong H^1(M) \cong \mathbb Z \oplus \mathbb Z$. 

Consider the homomorphism $\psi$ given by the composition $H_2(M, \partial M) \stackrel{\partial}{\longrightarrow} H_1(\partial M) = H_1(T_1) \oplus H_1(T_2) \to H_1(T_1)$. Then $\psi(a \xi_1 + b \xi_2) = a \mu_1 + nb \lambda_1$, and therefore $\psi$ is injective.

Let $p, q$ be coprime integers such that $\gcd(n, p) = 1$. According to Theorem \ref{th_fibred}, there is a fibre $F$ in $M$ which can be oriented so that $\psi([F]) = [F \cap T_1] = nq \mu_1 + np \lambda_1 = \psi(nq \xi_1 + p \xi_2)$. Hence $[F] = nq \xi_1 + p \xi_2$ so that $nq \xi_1 + p \xi_2$ is a fibre class in $H_2(M, \partial M)$. 

Fix coprime integers $a, b$ and consider the class $\xi = a \xi_1 + b \xi_2$. The proposition will follow if we can show that the projective class of $\xi$ can be arbitrarily closely approximated by fibre classes \cite[Theorem 2]{Th}. By the previous paragraph $\xi$ is a fibre class when $a = 0$, so suppose this is not the case. It suffices to show that $\frac{b}{a} = \lim_m \frac{b_m}{a_m}$ where $a_m \xi_1 + b_m \xi_2$ are fibre classes. This is easy to verify: for each integer $m > 0$ set $p_m = nmba + 1$ and $q_m = mb^2$. Then $\gcd(p_m, nq_m) = 1$ and from the previous paragraph we see that $n q_m \xi_1 + p_m \xi_2$ is a fibre class. Finally, $\lim_m \frac{nq_m}{p_m} = \frac{b}{a}$, which completes the proof.
\end{proof}

\subsection{$K$ is not periodic} 

In this case, $Z(K)$ is generated by a free symmetry of the pair $(S^3,K)$. Then $\mathcal{Z}_K$ is a complete  hyperbolic 3-manifold with a torus cusp and $\mathcal{Z}_{K}(r(K)) = \mathcal{L}_{K}$ is a lens space $L(p, q)$. The image $\bar K$ of $K$ in $\mathcal{Z}_{K}(r(K))$ is primitive, since its preimage in the universal cover $S^3$ has one component. Since $|\mathcal{CC}(K)| > 1$, Proposition \ref{prop:commensurable} shows that there is another slope $r'$ in the torus cusp of $\mathcal{Z}_{K}$ such that 
$\mathcal{Z}_{K}(r')$ is a lens space $L(p', q')$. The following key result has been explained to us by Jake Rasmussen. 

\begin{theorem} \label{thm:fibred}  
Let $K$ be a primitive knot in $Y = L(p,q)$ which admits a non-trivial lens space surgery. Then $K$ is fibred.
\end{theorem}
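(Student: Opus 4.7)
The plan is to adapt Ni's fibredness theorem for knots in $S^3$ (the result already cited as \cite{NiF}) to the setting of a primitive knot in a lens space. The proof rests on two pillars from Heegaard Floer homology: the Ozsv\'ath--Szab\'o structure theorem for knots with L-space surgeries, and the Ni (and Ghiggini in genus one) criterion detecting fibredness from knot Floer homology.

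First, I would set up the Alexander grading. Since $K$ is primitive in $Y = L(p,q)$, a meridian of $K$ carries a generator of $H_1(Y) \cong \mathbb{Z}/p$ together with a generator of the $\mathbb{Z}$-summand of $H_1(Y \setminus N(K))$. It follows that the exterior $M$ of $K$ has $H_1(M) \cong \mathbb{Z}$, so a rational Seifert surface exists and $\widehat{HFK}(Y,K,\s,\cdot)$ carries a well-defined (relative, and in fact absolute) $\mathbb{Z}$-valued Alexander grading for each $\s \in \Spinc(Y)$.

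Next I would exploit the two lens-space surgeries. Both $Y$ and $Y(r')$ are lens spaces and hence L-spaces, so $K$ is an L-space knot in the sense that it admits an L-space surgery with an L-space ambient manifold. For null-homologous L-space knots in rational homology spheres, the Ozsv\'ath--Szab\'o large-surgery formula, together with the rank equality $\operatorname{rk} \widehat{HF}(Y(r')) = |H_1(Y(r'))|$, forces $\widehat{HFK}(Y,K,\s,i)$ to be either $0$ or $\mathbb{Z}$ in every bigrading, with the nonzero groups prescribed by the torsion Alexander polynomial of $K$ (relative to the appropriate $\Spinc$ structure). In particular, in each $\s$ the top nonzero Alexander grading has $\widehat{HFK} \cong \mathbb{Z}$, and Ozsv\'ath--Szab\'o's norm-detection result identifies this top grading with the Thurston norm of the rational Seifert class.

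The conclusion then follows from the Floer-theoretic characterization of fibredness: if a rationally null-homologous knot in a closed oriented $3$-manifold has $\widehat{HFK}$ supported in Alexander gradings bounded by the Thurston norm, with the top Alexander grading a single $\mathbb{Z}$, then the complement fibres over $S^1$ with fibre a rational Seifert surface. This is exactly the content of Ni's theorem in \cite{NiF}, proved via Juh\'asz's sutured Floer homology and Gabai's existence of taut sutured manifold decompositions; the argument does not use anything special about $S^3$ beyond the existence of an Alexander grading, so it applies here once the previous step has been carried out.

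The main obstacle is bookkeeping in Step 2: one must keep careful track of the $\Spinc$ decomposition under Dehn surgery, verify that the L-space surgery formula (originally stated for $S^3$) applies verbatim with $Y$ a lens space, and identify the correct $\Spinc$ structure in which the ``top'' Alexander grading lives. Once this is done, Steps 1 and 3 are comparatively routine applications of results already in the literature.
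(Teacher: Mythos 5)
Your proposal is correct and follows essentially the same route as the paper: both use the Ozsv\'ath--Szab\'o surgery formula for knots in rational homology spheres together with the fact that $Y$ and the surgered manifold are L-spaces to force $\widehat{HFK}(Y,K,\xi)$ to be $0$ or $\mathbb{Z}$ in each relative $\Spinc$ structure, and then conclude via Juh\'asz's sutured Floer homology and surface decomposition along a minimal-genus rational Seifert surface that the complement is a product sutured manifold, hence fibred. The ``bookkeeping'' you flag in Step 2 is precisely where the paper spends its effort (the mapping cone computation showing $H_*(A_\xi)\cong\mathbb{Z}$ and the identification of the single outer $\Spinc$ structure), so your outline matches the published argument.
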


This theorem shows that $\mathcal{Z}_{K}$ is a surface bundle over the circle, and since there is an unbranched cover $S^3 \setminus K \to \mathcal{Z}_K$, $K$ is a fibred knot. Many, though not all, of the elements of the proof of Theorem \ref{thm:fibred} are contained in \cite{Ras}. Owing to its importance to this paper, we include a proof here.

\begin{proof}[Proof of Theorem \ref{thm:fibred}] The analogous result is known to be true for knots in the 3-sphere \cite{NiF}: If a knot $K\subset S^3$ has a lens space surgery, then $K$ is fibred.  The proof of Ni's result uses the Heegaard-Floer homology package developed by Ozsv{\'a}th and Szab{\'o} in \cite{OzSzHD,OzSzKI} and extended 
to sutured manifolds by Ni and Juhasz \cite{NiF,JuhHD}. The essential property of lens spaces which is invoked is that they are L-spaces, which are rational homology spheres with Heegaard-Floer homology as simple as possible  (c.f. \cite{OzSzLS}). Our situation is similar in that both the initial and the surgered manifold are L-spaces, and our proof follows that of the $S^3$ case.

Let $\mu_K$ denote the meridinal slope of $K$ and $\lambda \ne \mu_K$ a slope whose associated surgery yields a lens space. By the cyclic surgery theorem, $\Delta(\lambda, \mu_K) = 1$, so any representative curve for $\lambda$ runs parallel to $K$.  

We find it convenient to use the notation from \cite{OzSzQS} even though it is somewhat different from that used elsewhere in the paper. We review this notation here. 

We use $\overline K$ to denote the knot $K$ with a choice of orientation. Dehn surgery on $K$ with slope $\lambda$ will be written $Y_\lambda(K)$. In \cite{OzSzQS}, Ozsv{\'a}th and Szab{\'o} compute the Heegaard-Floer homology of manifolds obtained by surgery on knots in rational homology spheres in terms
of the knot filtration on the chain complex whose homology is the Heegaard-Floer homology of the ambient manifold.
Based on this, Rasmussen computes the knot Floer homology of knots in lens spaces which admit {\it integer}  homology L-space surgeries \cite[Lemma 4.7]{Ras}.
The strategy here is to do the same calculation for knots admitting general L-space surgeries and then to pass to the Floer homology of a certain sutured manifold.

One can associate a doubly pointed Heegaard diagram $(\Sigma,\alphab,\betab,w,z)$ which determines $(Y,\overline K),$ from which Ozsv{\'a}th and Szab{\'o} construct a chain complex $CFK^{\infty}(\Sigma,\alphab,\betab,w,z)$ as follows. 
The generating set is $\{[\x,i,j]: \x\in \mathbb{T}_\alpha\cap\mathbb{T}_\beta,i,j\in\mathbb{Z}\}$ where $\mathbb{T}_\alpha$ and $\mathbb{T}_\beta$ are two totally real tori in the symmetric product $Sym^g(\Sigma)$ which is endowed with an almost complex structure. 
The differential counts certain pseudoholomorphic disks connecting the generators with the boundary mapping to $\mathbb{T}_\alpha\cup\mathbb{T}_\beta$. 
The two basepoints $w$ and $z$ give rise to codimension 2 submanifolds $\{w\}\times Sym^{g-1}(\Sigma),$ resp. $\{z\}\times Sym^{g-1}(\Sigma).$ The indices $i,j$ are
employed in order to keep track of the intersection of the holomorphic disks with the two submanifolds above. More precisely:
\begin{equation*}
  \partial^\infty[\x,i,j]=\sum_{\y\in\mathbb{T}_\alpha\cap\mathbb{T}_\beta}\sum_{\phi\in\pi_2(\x,\y),\mu(\phi)=1}\#\left( \frac{\mathit{M}(\phi)}{\mathbb{R}} \right)\cdot[\y,i-n_w(\phi),j-n_z(\phi)],
\end{equation*}

where $\pi_2(\x,\y)$ denotes the homotopy class of disks connecting $\x$ and $\y,$ $\mu(\phi)$ is the Maslov index of $\phi,$ $\#\left( \frac{\mathit{M}(\phi)}{\mathbb{R}} \right)$ is the count of
holomorphic representatives of $\phi,$ $n_w(\phi)=\#\phi\cap\left( \{w\}\times Sym^{g-1}(\Sigma) \right)$, and similarly for $n_z(\phi)$. Note that $n_w(\phi)\ge 0,n_z(\phi)\ge 0$ since the
submanifolds involved are almost complex. Therefore the $i,j$ indices define a $\zz\oplus\zz$ filtration on $CFK^\infty(Y,\overline K).$

The non-triviality of $\phi\in\pi_2(\x,\y)$ is homologically obstructed and as consequence the complex $CFK^\infty(Y, \bar K)$ splits into summands which are in 
bijection with $\Spinc$ structures on $Y.$ Following Turaev \cite{Tur} $\Spinc$ structures can be seen as homology classes of non-vanishing vector fields and they form an affine space over 
$H^2(Y).$ From the combinatorics of the Heegaard diagram one can construct a function $\s_w:\mathbb{T}_\alpha\cap\mathbb{T}_\beta\to \Spinc(Y)$ which 
sends an intersection point $\x$ to the homology class of a vector field. There is also a relative version, $\underline{\Spinc}(Y,K),$ which consists of homology classes
of vector fields on $Y\setminus N(K)$ which point outwards at the boundary; one has an analogous map $\s_{w,z}:\mathbb{T}_\alpha\cap\mathbb{T}_\beta\to\RSpinc(Y,K)$ \cite[Section 2.4]{OzSzQS}. $\RSpinc(Y,K)$ is an affine space over $H^2(Y,K).$ 

One can extend a vector
field on $Y\setminus N(K)$ to a vector field on $Y$ such that the (oriented) knot is a closed trajectory. This gives rise to a map $G_{Y,\overline K}:\RSpinc(Y,K)\to\Spinc(Y)$ which
is equivariant with respect to the action of $H^2(Y,K):$ $G_{Y,\overline K}(\xi+k)=G_{Y,\overline K}(\xi)+\iota^*(k).$ where $\iota^*:H^2(Y,K)\to H^2(Y)$ is the natural map induced by inclusion.
Moreover, given $\xi,\eta\in\RSpinc(Y,K),$ $G_{Y,\overline K}(\xi)=G_{Y,\overline K}(\eta)\iff \xi=\eta+n\cdot PD[\mu]$ for some $n\in\zz$ where $\mu$ is a meridian of
$K.$ There is an analogous map $G_{Y_\lambda(K),\overline K}:\RSpinc(Y,K)\to\Spinc(Y_\lambda(K))$ which extends the vector fields on $Y\setminus N(K)$ to $Y_\lambda(K)$ 
such that the induced knot $K'$ in the surgered manifold becomes a trajectory with the orientation inherited from $K$. See \cite[Section 6]{OzSzQS}.

For $\x,\y\in \mathbb{T}_\alpha\cap\mathbb{T}_\beta$ such that there exists $\phi\in\pi_2(\x,\y)$, we have: $\s_{w,z}(\x)-\s_{w,z}(\y)=(n_z(\phi)-n_w(\phi))\cdot PD[\mu]$ \cite[Lemma 2.1]{OzSzQS}.
This splits $CFK^\infty(Y,\overline K)$ into various summands: Fix $\xi\in\RSpinc(Y,K).$ The subgroup $C_\xi:=\{[\x,i,j]\in CFK^\infty(Y,\overline K): \s_{w,z}(\x)+(i-j)\cdot PD[\mu]=\xi\}$ becomes
a subcomplex of $CFK^\infty(Y,\overline K).$

Ozsv{\'a}th and Szab{\'o} consider the induced complexes $A_\xi(Y,\overline K)$ and $B_\xi(Y,\overline K)$ as ingredients in the Morse surgery formula:
The complex $A_\xi(Y,\overline K):=C_\xi\{\max(i,j)=0 \}$ with the induced differential from the complex $CFK^\infty(Y,\overline K)$ computes the
Heegaard Floer homology of large enough integral surgeries on $K$ in a particular $\Spinc$ structure \cite[Theorem 4.1]{OzSzQS}. Since $\chi(\HF(M,\s))=1$ 
for all $\s\in \Spinc(M)$ where $M$ is any rational homology sphere \cite[Theorem 5.1]{OzSzPA} and framed surgeries on $K$ are rational homology spheres, 
we have that $\hbox{rank}(H_*(A_\xi(Y,\overline K)))\ge 1.$

By definition, $B_\xi(Y,\overline K):=C_\xi\left\{ i=0 \right\}$ computes $\widehat{HF}(Y,G_{Y,\overline K}(\xi)).$ 
In addition, $C_\xi\{j=0\}$ is identified with $B_{\xi+PD[K_\lambda]},$ where $K_\lambda$ is the knot $K$ pushed off inside $Y\setminus N(K)$ along the framing $\lambda$. See \cite[Proposition 3.2]{OzSzQS} for an explanation of the grading shift.

Consider also the maps $v_\xi$, resp. $h_\xi$, the natural projections $C_\xi\{\max(i,j)=0\}\to C_\xi \left\{ i=0 \right\}$,  resp. $C_\xi\{\max(i,j)=0\}\to C_\xi \left\{ j=0 \right\}.$

By \cite[Theorem 4.1]{OzSzQS} they are identified with the induced maps in $\HF$ by the natural cobordism $W'_m(K)$ between $Y_\lambda(K)$ and $Y$ equipped with corresponding $\Spinc$ structures.

We can now state the surgery formula for the ``hat" version of Heegaard-Floer homology.  This corresponds to taking $\delta=0$ in \cite[Theorem 6.4]{OzSzQS}. See \cite[Section 2.8]{OzSzQS} for an explanation.
We therefore drop the $\delta$ indice in what follows:

\begin{theorem} \label{th:mapp_cone}
{\rm \cite[Theorem 6.4]{OzSzQS}} Fix a $\Spinc$ structure $\s\in \Spinc(Y_\lambda(K)).$ Then 
  \begin{equation}
\widehat{HF}(Y_\lambda(K),\s) \cong H_*(\hbox{Cone}(D_\mathfrak{s}:\mathbb{A}_\mathfrak{s}(Y,K)\to\ \mathbb{B}_\mathfrak{s}(Y,K)))  
\end{equation}

where 

\begin{equation*}
  \mathbb{A}_\mathfrak{s}(Y,K)= \bigoplus_{\{\xi\in \RSpinc(Y_\lambda(K),K)| G_{Y_\lambda(K),\overline K}(\xi)=\mathfrak s\}} A_\xi(Y,\overline K)
 \end{equation*}

and 

\begin{equation*}
  \mathbb{B}_\mathfrak{s}(Y,K)= \bigoplus_{\{\xi\in \RSpinc(Y_\lambda(K),K)| G_{Y_\lambda(K),\overline K}(\xi)=\mathfrak s\}} B_\xi(Y,\overline K)
\end{equation*}

The map $D_\s$ is defined as follows:

\begin{equation*}
  D_\s\left(\{a_\xi \}_{\xi\in G^{-1}_{Y_\lambda(K),\overline K}(\s)}\right)=\{b_\xi\}_{\xi\in G^{-1}_{Y_\lambda(K),\overline K}(\s)}
\end{equation*}

with $b_\xi=h_{\xi-PD[K_\lambda]}(a_{\xi-PD[K_\lambda]})-v_\xi(a_\xi).$

\end{theorem}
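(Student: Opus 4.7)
The plan is to establish the mapping cone formula by bootstrapping from the large integer surgery formula via the surgery exact triangle, following the scheme Ozsv\'ath--Szab\'o developed for integer surgeries and then adapting the bookkeeping to the rational case. As a starting point, I would take for granted the large surgery theorem (\cite[Theorem 4.1]{OzSzQS} as quoted in the text): for framing $n$ sufficiently large, the complex $A_\xi(Y,\overline K)$ computes $\widehat{HF}(Y_n(K),\s)$ in the appropriate $\Spinc$ structure, with the chain-level projections $v_\xi,h_\xi$ being identified with the maps induced by the two natural $\Spinc$ cobordisms from $Y_n(K)$ to $Y$.

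Next I would set up the surgery exact triangle for three surgeries on $K$ with slopes differing by a unit of the dual framing, and iterate it. Concretely, writing $\lambda$ as a slope on $\partial N(K)$, one compares $\widehat{HF}(Y_\lambda(K))$ with $\widehat{HF}(Y_{\lambda+n\mu}(K))$ for $n\gg 0$ via a telescope of exact triangles whose connecting maps are the cobordism maps induced by successive $-1$ framed $2$-handle additions. By the large surgery theorem the terms at the far end of the telescope are identified with $A_\xi$'s, and the connecting maps are identified with $v_\xi$ and $h_\xi$ up to the appropriate $\Spinc$ shift $PD[K_\lambda]$. The desired chain homotopy equivalence $\widehat{CF}(Y_\lambda(K),\s)\simeq \mathrm{Cone}(D_\s)$ then emerges as the mapping cone of the telescope limit, summed over the fiber $G_{Y_\lambda(K),\overline K}^{-1}(\s)\subset \RSpinc(Y_\lambda(K),K)$ of the map extending vector fields from $Y\setminus N(K)$ across the surgery solid torus.

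The first substantive step, therefore, is to check that the cobordism maps in the exact triangle really do agree with the pair $(v_\xi,h_\xi)$ defined by the knot filtration; this is done by a Heegaard-diagrammatic comparison, holomorphically counting triangles with a carefully chosen triple Heegaard diagram so that the boundary degenerations reproduce the two projections in $CFK^\infty$. The second step is to verify that passing to the mapping cone commutes with taking the direct sum over the affine set of relative $\Spinc$ structures projecting to $\s$, which uses the equivariance $G_{Y,\overline K}(\xi+k)=G_{Y,\overline K}(\xi)+\iota^*(k)$ and the fact that the fibers of $G_{Y_\lambda(K),\overline K}$ are $\mathbb Z$-torsors generated by $PD[K_\lambda]$.

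The main obstacle is precisely this $\Spinc$ bookkeeping in the rational surgery case. For integer surgery the fiber of $G_{Y_\lambda(K),\overline K}$ over $\s$ is a full coset of $\mathbb Z\cdot PD[\mu]$ and the mapping cone collapses to a single infinite diagram; for rational $\lambda$, the framing $K_\lambda$ meets $K$ algebraically in $p=\Delta(\mu_K,\lambda)$ points, so the coset is shifted by $PD[K_\lambda]=p\cdot PD[\mu]$ plus a torsion correction, and one must show that the resulting mapping cone is independent of the choices used to identify $\RSpinc(Y,K)$ with $\RSpinc(Y_\lambda(K),K)$. Once this identification is nailed down, the remainder of the argument is formal homological algebra on the telescope of exact triangles.
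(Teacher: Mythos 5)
This statement is not proved in the paper at all: it is quoted verbatim, with attribution, as Theorem~6.4 of the cited Ozsv\'ath--Szab\'o preprint on rational surgeries, so there is no internal proof to compare your argument against. Your sketch is a reasonable outline of the strategy used in that reference --- the large surgery theorem identifying $A_\xi$ with $\HF$ of large framed surgeries, a truncated telescope of surgery exact triangles whose connecting maps are matched with $v_\xi$ and $h_\xi$ by counting holomorphic triangles, and the $\Spinc$ bookkeeping over the fibers of $G_{Y_\lambda(K),\overline K}$ --- but as written it is a plan rather than a proof, and for the purposes of this paper the correct move is simply to cite the result, as the authors do.
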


In our case, both $Y$ and $Y_\lambda(K)$ are lens spaces, hence L-spaces \cite{OzSzLS}. Thus $H_*(B_\xi(Y,\overline K))\cong \mathbb{Z}$ for any $\xi\in\Spinc(Y,K)$ and
$H_*(\hbox{Cone}(D_\s))\cong\mathbb{Z}$ for any $\s\in \Spinc(Y_\lambda(K))$  by Theorem \ref{th:mapp_cone}. In fact for any field $\mathbb{F},$ $H_*(B_\xi(Y,\overline K);\mathbb{F})\cong \mathbb{F}$ 
and $H_*(\hbox{Cone}(D_\s);\mathbb{F})\cong\mathbb{F}.$

\begin{lemma} After a possible change of orientation of the ambient manifold $Y,$ $H_*(A_\xi(Y,\overline K))\cong\mathbb{Z}$ for any $\xi\in\RSpinc(Y,K).$ 
\end{lemma}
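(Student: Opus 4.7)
Following the template of \cite[Lemma 4.7]{Ras}, which addresses the case of integer homology L-space surgeries, I would adapt the argument to our more general L-space surgery hypothesis. Work first with coefficients in a field $\mathbb{F}$, deferring the integral statement to a universal coefficient argument at the end. Fix $\mathfrak{s}\in\Spinc(Y_\lambda(K))$ and parametrize the fibre $G_{Y_\lambda(K),\overline K}^{-1}(\mathfrak{s})$ by $\mathbb{Z}$ via $\xi_n=\xi_0+n\cdot PD[K_\lambda]$. The mapping cone formula of Theorem \ref{th:mapp_cone} yields a long exact sequence relating $\bigoplus_n H_*(A_{\xi_n};\mathbb{F})$, $\bigoplus_n H_*(B_{\xi_n};\mathbb{F})$, and $H_*(\mathrm{Cone}(D_\mathfrak{s});\mathbb{F})$. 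The three inputs are: $\dim H_*(B_{\xi_n};\mathbb{F})=1$ for all $n$ (from $Y$ an L-space); $\dim H_*(\mathrm{Cone}(D_\mathfrak{s});\mathbb{F})=1$ for all $\mathfrak{s}$ (from $Y_\lambda(K)$ an L-space); and $\dim H_*(A_{\xi_n};\mathbb{F})\geq 1$ for all $n$, already observed from the Euler characteristic.

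The second ingredient is the large surgery behavior of $v$ and $h$. For $n$ sufficiently positive, the map $v_{\xi_n}^{*}:H_*(A_{\xi_n};\mathbb{F})\to H_*(B_{\xi_n};\mathbb{F})$ is an isomorphism; for $n$ sufficiently negative, $h_{\xi_n}^{*}:H_*(A_{\xi_n};\mathbb{F})\to H_*(B_{\xi_{n+1}};\mathbb{F})$ is an isomorphism. This is the stabilization property of $CFK^\infty$ at the extremes of the filtration. The choice of orientation of $Y$ in the statement of the lemma is precisely what aligns these two asymptotic regimes on opposite sides rather than both on the same side, so that $D_\mathfrak{s}^{*}$ reduces to an isomorphism outside a finite range of $n$.

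Restricting to the non-stable range, the exact sequence collapses to a finite-rank calculation. Using $\dim H_*(B_{\xi_n};\mathbb{F})=1$ and $\dim H_*(\mathrm{Cone}(D_\mathfrak{s});\mathbb{F})=1$, one deduces the numerical identity
\begin{equation*}
\sum_{n}\bigl(\dim_\mathbb{F} H_*(A_{\xi_n};\mathbb{F})-1\bigr)=2\dim_\mathbb{F}\mathrm{Ker}(D_\mathfrak{s}^{*}),
\end{equation*}
where the sum runs over the finite non-stable set. Since every term on the left is non-negative, the key step is to rule out strict positivity anywhere. This is the main obstacle. The plan is to sum over $\mathfrak{s}\in\Spinc(Y_\lambda(K))$ and compare the total against the L-space rank identities $\sum_{\mathfrak{s}}\dim\widehat{HF}(Y_\lambda(K),\mathfrak{s};\mathbb{F})=|H_1(Y_\lambda(K))|$ and $\sum_{\mathfrak{t}}\dim\widehat{HF}(Y,\mathfrak{t};\mathbb{F})=|H_1(Y)|$; the matching uses $\Delta(\lambda,\mu_K)=1$ from the cyclic surgery theorem, which identifies the two rank totals cleanly across the cobordism. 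Together these force $\dim H_*(A_{\xi_n};\mathbb{F})=1$ and $\mathrm{Ker}(D_\mathfrak{s}^{*})=0$ for every $n$ and every $\mathfrak{s}$.

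Finally, since the conclusion holds for every coefficient field $\mathbb{F}$, universal coefficients gives $H_*(A_\xi(Y,\overline K);\mathbb{Z})\cong\mathbb{Z}$. The hardest part of the argument is the global rank matching in the third step: one must carefully bookkeep the $\Spinc$ labels on both sides, the grading shift induced by $PD[K_\lambda]$, and the identification $\Delta(\lambda,\mu_K)=1$ that makes the L-space rank data from $Y$ and from $Y_\lambda(K)$ fit together exactly. All other steps are essentially unchanged from \cite{Ras} once one replaces its integer homology L-space hypothesis by the general L-space identity.
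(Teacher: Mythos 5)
Your overall route is the paper's: field coefficients, the mapping cone of Theorem \ref{th:mapp_cone}, the three L-space inputs, the asymptotic behaviour of $v_{\xi_n}$ and $h_{\xi_n}$ from \cite[Lemma 6.5]{OzSzQS} (with the orientation of $Y$ chosen so that the rational longitude $a\cdot\mu+p\cdot\lambda$ has $a<0$, which places the two stable regimes at opposite ends), truncation to a finite complex, a rank count, and universal coefficients at the end. But your central numerical identity is wrong, and the error sends you on an unnecessary detour. After discarding the acyclic stable pieces, the surviving zigzag runs from $A_{\xi_{n_-}}$ to $A_{\xi_{n_+}}$ and contains exactly \emph{one more} $A$ summand than $B$ summand (the $A$'s are indexed by $n_-\le n\le n_+$, the $B$'s by $n_-+1\le n\le n_+$, because the first surviving arrow is $h_{\xi_{n_-}}$ into $B_{\xi_{n_-+1}}$ and the last is $v_{\xi_{n_+}}$ into $B_{\xi_{n_+}}$). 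Writing $V=\bigoplus_n H_*(A_{\xi_n};\mathbb{F})$ and $W=\bigoplus_n H_*(B_{\xi_n};\mathbb{F})$ over this range, one has $\dim V-\dim W=\dim\mathrm{Ker}((D_\s)_*)-\dim\mathrm{Coker}((D_\s)_*)$ and $\dim\mathrm{Ker}((D_\s)_*)+\dim\mathrm{Coker}((D_\s)_*)=\dim H_*(\hbox{Cone}(D_\s);\mathbb{F})=1$, which gives
\begin{equation*}
\sum_n\bigl(\dim_{\mathbb{F}}H_*(A_{\xi_n};\mathbb{F})-1\bigr)=-2\dim_{\mathbb{F}}\mathrm{Coker}((D_\s)_*)\le 0,
\end{equation*}
not $2\dim_{\mathbb{F}}\mathrm{Ker}((D_\s)_*)$. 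Since each term on the left is $\ge 0$, everything vanishes on the spot: each $H_*(A_{\xi_n};\mathbb{F})\cong\mathbb{F}$, and $\mathrm{Ker}\cong\mathbb{F}$, $\mathrm{Coker}=0$. There is no residual positivity to rule out.

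Consequently the step you flag as the main obstacle --- summing over $\s\in\Spinc(Y_\lambda(K))$ and matching total ranks against $|H_1(Y_\lambda(K))|$ and $|H_1(Y)|$ --- is not needed, and as described it would not close the gap your identity creates: it cannot force each $\dim\mathrm{Ker}((D_\s)_*)$ to vanish, because that kernel is never zero (injectivity of $(D_\s)_*$ would give $\dim V\le\dim W$, contradicting $\dim V\ge\dim W+1$). The count ``one more $A$ than $B$'' is the whole point of the truncation and is exactly what the paper, following \cite[Lemma 4.6]{Ras}, invokes to finish locally in each $\s$. A smaller omission: for the stable pieces to split off as acyclic direct summands you need not only that $v_{\xi_n}$ (resp.\ $h_{\xi_n}$) is an isomorphism for $n\gg 0$ (resp.\ $n\ll 0$), but also that the companion map $h_{\xi_n}$ (resp.\ $v_{\xi_n}$) is trivial there; \cite[Lemma 6.5]{OzSzQS} gives both, and the splitting of the truncated complex depends on it.
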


\begin{proof}

This lemma is a slight generalisation of \cite[Lemma 4.6]{Ras}. It is only applied to a summand corresponding to the particular $\Spinc$ structure $\s$. Write the rational longitude of $K$ as $a\cdot \mu+ p\cdot \lambda$ for some $a\in\zz\setminus\left\{ 0 \right\}.$ By changing the orientation of $Y$ if necessary 
one can assume $a<0.$

The proof of Proposition \ref{prop:commensurable} shows that $K' \subset Y_\lambda(K)$ is primitive in $Y_\lambda(K)$. The map $G_{Y_\lambda(K),\overline K}$ is affinely modeled on the canonical projection: $\pi:\mathbb{Z}\to \mathbb{Z}/m$ where $m$ is the order of $H^2(Y_\lambda(K))$ and $\pi$ is the map $i^*$ induced in cohomology $i^*:H^2(Y_\lambda(K),K)\to H^2(Y_\lambda(K)),$ see \cite[Section 2.2]{OzSzQS}.
Therefore the groups $(A_\xi)_{\xi\in G^{-1}_{Y_\lambda(K),\overline K}(\s)}$ form an affine copy of $m\cdot\mathbb{Z}$ in $\mathbb{Z}\cong\Spinc(Y,K)$ and adding $PD[K_\lambda]$ corresponds to
a translation by $m.$    
By \cite[Lemma 6.5]{OzSzQS} and the assumption $a<0$, for sufficiently large $n>n_+$ the map $v_{\xi+n\cdot PD[K_\lambda]}:A_{\xi+n\cdot PD[K_\lambda]}(Y,\overline K)\to B_{\xi+n\cdot PD[K_\lambda]}(Y,\overline K)$ is an isomorphism and 
$h_{\xi+n\cdot PD[K_\lambda]}:A_{\xi+n\cdot PD[K_\lambda]}(Y,\overline K)\to B_{\xi+(n+1)\cdot PD[K_\lambda]}(Y,\overline K)$ is trivial. If $n$ is sufficiently small, $n<n_-$, $v_{\xi+n\cdot PD[K_\lambda]}$ is trivial and $h_{\xi+n\cdot PD[K_\lambda]}$ is an isomorphism.

In general, the homology of the mapping cone of $D_\s$ is an extension of $\hbox{Ker}( (D_\s)_*)$ by $\hbox{Coker}( (D_\s)_*)$ \cite[Chapter 1]{Weib}.
Using homology with field ($\mathbb{F})$ coefficients, this extension splits: 
$h_*(\hbox{Cone}(D_\s))\cong \hbox{Ker}((D_\s)_*)\oplus \hbox{Coker}((D_\s)_*).$ Another way to say this is:
$H_*(\hbox{Cone}(D_\s))\cong H_*(\mathbb{X})$ where $\mathbb{X}$ is the short chain complex: 

\begin{diagram}
  0  &\rTo & H_*(\mathbb{A}_\s(Y,\overline K);\mathbb{F}) & \rTo^{(D_\s)_*} & H_*(\mathbb{B}_\s(Y,\overline K);\mathbb{F}) & \rTo&0
\end{diagram}
\cite[Theorem 4.1]{Ras} 

Owing to the behavior of $D_\s|A_{\xi+n\cdot PD[K_\lambda]}(Y,\overline K)$ for large, resp. small, $n$, the chain complex $\mathbb{X}$ splits into an infinite sum of acyclic subcomplexes:  
  
\begin{diagram}
  0  &\rTo & H_*(A_{\xi+n\cdot PD[K_\lambda]}(Y,\overline K);\mathbb{F}) & \rTo^{(h_{\xi+n\cdot PD[K_\lambda]})_*}_\cong & H_*(B_{\xi+n\cdot PD[K_\lambda]}(Y,\overline K);\mathbb{F}) & \rTo&0
\end{diagram}
 for $n>n_+$,   
\begin{diagram}
  0  &\rTo & H_*(A_{\xi+n\cdot PD[K_\lambda]}(Y,\overline K);\mathbb{F}) & \rTo^{(v_{\xi+n\cdot PD[K_\lambda]})_*}_\cong & H_*(B_{\xi+(n+1)\cdot PD[K_\lambda]}(Y,\overline K);\mathbb{F}) & \rTo&0
\end{diagram}
for $n<n_-$,  
and the nontrivial subcomplex between the groups $A_{\xi+n_-\cdot PD[K_\lambda]}(Y,\overline K)$ and $A_{\xi+n_+\cdot PD[K_\lambda]}(Y,\overline K)$:

  \begin{diagram}
    A_{\xi+n_-\cdot K_\lambda}&     &  A_{\xi+(n_-+1)\cdot K_\lambda}&     &     \dots&     & A_{\xi} &  & A_{\xi+K_\lambda}  &  & \dots & & A_{\xi+n_+\cdot K_\lambda}\\
    & \rdTo_{h_{\xi+n_-\cdot K_\lambda}} & \dTo^{v} & \rdTo_{h_{\xi(n_-+1)\cdot K_\lambda}}& & \rdTo&      \dTo^{v_\xi}     &           \rdTo_{h_{\xi }}& \dTo^{v}     &   \rdTo_{h_{\xi+\cdot K_\lambda}}   & & \rdTo& \dTo_{v_{\xi+n_+\cdot K_\lambda}} \\
    &  &       B_{\xi+(n_-+1)\cdot K_\lambda} &  &       \dots &  &    B_\xi &             & B_{\xi+K_\lambda}      &             & \dots & & B_{\xi+n_+\cdot K_\lambda}\\
  \end{diagram}

Since $H_*(\mathbb{X};\mathbb{F}) \cong H_*(B_{\xi+n\cdot PD[K_\lambda]}(Y,\overline K);\mathbb{F})\cong \mathbb{F}$, $\hbox{rank}( H_*(A_{\xi+n\cdot PD[K_\lambda]}(Y,\overline K);\mathbb{F}))\ge 1$ and the number of $A$ groups is one greater than that of $B$ groups,
we have $H_*(A_{\xi+n\cdot PD[K_\lambda]}(Y,\overline K);\mathbb{F})\cong\mathbb{F}$. Since $\mathbb{F}$ was arbitrary, the universal coefficient theorem implies that $H_*(A_\xi(Y,\overline K))\cong\mathbb{Z}$.
\end{proof}

This phenomenon was studied in \cite{OzSzLS} for knots in $S^3.$ The result is purely algebraic, so it extends to our situation. As in the previous lemma, the proof is the same. The only change is that it is applied to a summand in the knot filtration corresponding to a fixed $\Spinc$ structure on $Y$.

\begin{lemma} {\rm (\cite[Lemmas 3.1 and 3.2]{OzSzLS})} \label{0 or z}
Under the conditions above, $\HFK(Y,K,\xi)$ is either $\mathbb{Z}$ or $0$ for any $\xi\in\RSpinc(Y,K).$
\end{lemma}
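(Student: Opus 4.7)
The plan is to imitate the purely algebraic argument of \cite[Lemmas 3.1 and 3.2]{OzSzLS}, which I will show goes through verbatim in our setting. That argument requires only two inputs: that $H_*(A_\xi;\mathbb{F})$ has rank one for every $\xi \in \RSpinc(Y,K)$ and every field $\mathbb{F}$, and that $H_*(B_\xi;\mathbb{F})$ has rank one for every such $\xi$ and $\mathbb{F}$. The first is the content of the preceding lemma, and the second holds because $Y$ is a lens space, hence an L-space, so $\HF(Y,\s;\mathbb{F}) \cong \mathbb{F}$ in every $\Spinc$ structure.

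First I would reduce to prime-field coefficients: by the universal coefficient theorem, an upper bound of $1$ on $\mathrm{rank}_\mathbb{F} \HFK(Y,K,\xi;\mathbb{F})$ for every prime field $\mathbb{F}$ forces the finitely generated abelian group $\HFK(Y,K,\xi)$ to be either $0$ or $\mathbb{Z}$, since any nontrivial $p$-torsion would contribute rank $\geq 2$ over $\mathbb{F}_p$. So I would fix a field $\mathbb{F}$ and work with $\mathbb{F}$-coefficients throughout the remainder of the argument.

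Next I would equip $B_\xi = C_\xi\{i=0\}$ with its Alexander ($j$-)filtration; the $s$-th associated graded piece $C_\xi\{i=0,\, j=s\}$ has homology $\HFK(Y,K,\xi,s;\mathbb{F})$, and the induced spectral sequence converges to $H_*(B_\xi;\mathbb{F}) \cong \mathbb{F}$. This yields the lower bound $\sum_s \mathrm{rank}_\mathbb{F} \HFK(Y,K,\xi,s;\mathbb{F}) \geq 1$. For the matching upper bound I would use the L-shape complex $A_\xi$, which fits into a short exact sequence
$$0 \longrightarrow C_\xi\{i=0,\, j \leq 0\} \longrightarrow A_\xi \longrightarrow C_\xi\{i<0,\, j=0\} \longrightarrow 0,$$
together with its translates obtained by replacing $\xi$ by $\xi + n \cdot PD[K_\lambda]$ for $n \in \zz$. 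Taking the associated long exact sequences in homology and combining the rank-one hypotheses on $H_*(A_{\xi + n \cdot PD[K_\lambda]};\mathbb{F})$ and $H_*(B_{\xi + n \cdot PD[K_\lambda]};\mathbb{F})$ produces, by induction on $n$, the cascade of rank inequalities carried out in the proof of \cite[Lemma 3.2]{OzSzLS}, which forces each $\HFK(Y,K,\xi,s;\mathbb{F})$ to have rank $\leq 1$.

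The main obstacle is notational rather than conceptual: in \cite{OzSzLS} the argument is indexed by a single Alexander grading $s \in \zz$, whereas here the indexing set is $\RSpinc(Y,K)$, which simultaneously encodes the absolute $\Spinc$ structure on $Y$ and the Alexander grading. Translating between the two indexings is a bookkeeping exercise using the affine action of $H^2(Y,K)$ on $\RSpinc(Y,K)$ recalled earlier in this section and the description of the fibres of $G_{Y,\overline K}$ as orbits of translation by $PD[\mu]$.
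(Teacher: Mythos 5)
Your overall strategy is the paper's: its proof of this lemma is a short reduction to the purely algebraic Lemmas 3.1 and 3.2 of \cite{OzSzLS}, applied to the $\zz\oplus\zz$-filtered complex $C=C_\xi$, with exactly the two inputs you identify ($H_*(B_\eta)\cong\zz$ from the L-space condition and $H_*(A_\eta)\cong\zz$ from the preceding lemma), followed by the argument of the proof of Theorem 1.2 there.

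There is, however, one concrete misstep in your execution. The cascade of rank inequalities in \cite[Lemma 3.2]{OzSzLS} runs over the subquotients $C_\xi\{\max(i,j-m)=0\}$ of the \emph{single} filtered complex $C_\xi$, and under the identification the paper uses these are the complexes $A_{\xi+m\cdot PD[\mu]}$ --- translates by the meridian class, not by $PD[K_\lambda]$. The complexes $A_{\xi+n\cdot PD[K_\lambda]}$ live in the different summands $C_{\xi+n\cdot PD[K_\lambda]}$ of $CFK^\infty(Y,\overline K)$ (they project to distinct $\Spinc$ structures on $Y$ whenever $\iota^*(PD[K_\lambda])\neq 0$); they are the complexes strung together by $v$ and $h$ in the mapping-cone formula and were already consumed in the previous lemma, but they are not subquotients of a common filtered complex and admit no short exact sequences linking them, so the induction you describe cannot be run over that family. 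Since the previous lemma gives $H_*(A_\eta)\cong\zz$ for \emph{every} $\eta\in\RSpinc(Y,K)$, the repair is purely a substitution of $PD[\mu]$ for $PD[K_\lambda]$ throughout --- and your closing remark about the fibres of $G_{Y,\overline K}$ being $PD[\mu]$-orbits suggests you already have the right picture in mind. A smaller point: with the convention that the differential of $CFK^\infty$ does not increase $(i,j)$, the subcomplex of $A_\xi$ is $C_\xi\{i<0,\, j=0\}$ and the quotient is $C_\xi\{i=0,\, j\le 0\}$, the reverse of your displayed sequence; this does not affect the existence of the long exact sequence, but the roles matter when you chase the rank inequalities.
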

\begin{proof}

Fix $\xi\in\RSpinc(Y,K).$ Lemmas 3.1 and 3.2 in \cite{OzSzLS} apply to a general $\zz\oplus\zz$ filtered chain complex $C.$ We take $C$ to be $C_\xi,$
notice that $C\left\{ \max(i,j)=0 \right\}$ corresponds to $A_\xi(Y,\overline K)$ and $C\left\{ \max(i,j-m)=0 \right\}$ corresponds to $A_{\xi+m\cdot PD[\mu]}(Y,\overline K);$
in particular the hypotheses of the two lemmas are satisfied.
One can therefore apply the argument in the proof of Theorem 1.2 in \cite{OzSzLS} and the conclusion follows.
\end{proof}

Juhasz defined an Ozsv{\'a}th-Szab{\'o}-type invariant (\cite{JuhHD}) called \emph{sutured Floer homology - SFH} for (balanced) sutured manifolds $(M,\gamma)$. (See also \cite{NiF}.) 
One can construct a balanced sutured manifold $Y(K)$ starting from a knot $K$ by removing $N(K)$ and considering as sutures two copies of the meridian with opposite orientations.
It is easy to see that $SFH(Y(K))\cong \HFK(Y,K)$ by a natural identification between the corresponding chain complexes. \cite[Proposition 9.2]{JuhHD}.

The invariant $SFH$ also decomposes into different summands corresponding to $\Spinc$ structures on $Y(K)$ which are in affine bijection with $H^2(Y(K),\partial Y(K)),$ hence in
bijection with $\RSpinc(Y,K).$ The isomorphism above preserves the splitting along relative $\Spinc$ structures.
The invariant $SFH$ proves to be very strong in detecting tautness and products:
\begin{theorem}\label{th:sut}{\rm (\cite{JuhSD})} Let $(M,\gamma)$ be an irreducible, balanced sutured manifold. Then $(M,\gamma)$ is taut if and only if $SFH(M,\gamma)\neq 0$ and 
  it is a product sutured manifold if and only if $SFH(M,\gamma)\cong\mathbb{Z}.$
\end{theorem}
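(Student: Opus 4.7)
The plan is to prove both equivalences by reducing to the extremal model case (the product sutured manifold, where $\mathit{SFH} \cong \mathbb{Z}$ by direct computation from a product Heegaard diagram) and then propagating this information along sutured manifold hierarchies. The main engine will be a surface decomposition formula for $\mathit{SFH}$: given a sufficiently well-behaved decomposing surface $S$ in $(M,\gamma)$ yielding a decomposition $(M,\gamma) \rightsquigarrow (M',\gamma')$, one should show that $\mathit{SFH}(M',\gamma')$ is a direct summand of $\mathit{SFH}(M,\gamma)$, identified by restricting to the set of relative $\mathrm{Spin}^c$ structures whose Chern class pairs maximally with $[S]$. I would first establish this summand formula by arranging an adapted Heegaard diagram in which the decomposing surface is visible, so that the generators surviving in $\mathit{SFH}(M',\gamma')$ are exactly the extremal ones with respect to the filtration induced by $S$.

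Granting the decomposition formula, the forward direction of the tautness statement (taut $\Rightarrow \mathit{SFH}\ne 0$) proceeds as follows. By Gabai's theorem, any taut, irreducible, balanced sutured manifold admits a sutured manifold hierarchy whose terminal piece is a disjoint union of product sutured manifolds, and one can arrange at each stage that the decomposing surface satisfies the hypotheses of the summand formula (this is the technical heart and the main obstacle — producing hierarchies whose decomposing surfaces are "nice" in the sense needed to make $\mathit{SFH}$ behave; this is the analogue, in the sutured setting, of choosing groomed decomposing surfaces). Since a product sutured manifold has $\mathit{SFH} \cong \mathbb{Z}\ne 0$, running the summand inclusions backwards up the hierarchy gives $\mathit{SFH}(M,\gamma)\ne 0$. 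For the converse, if $(M,\gamma)$ is not taut then either $M$ is reducible (excluded by hypothesis) or $(M,\gamma)$ contains a compressing or boundary-compressing disk or a non-product-disk annulus; decomposing along such a surface yields a sutured manifold with a sphere or disjoint-from-sutures disk, whose $\mathit{SFH}$ one checks directly to be zero, and the summand formula forces $\mathit{SFH}(M,\gamma) = 0$.

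For the product equivalence, one direction is the direct Heegaard diagram computation. For the other, suppose $\mathit{SFH}(M,\gamma) \cong \mathbb{Z}$ with $(M,\gamma)$ taut. I would take a taut hierarchy ending in a product and track the ranks: each decomposition step cannot increase the rank of $\mathit{SFH}$, and the summand formula together with an analysis of which $\mathrm{Spin}^c$ structures survive forces the decomposing surfaces to themselves be product disks/annuli (otherwise extra summands of $\mathit{SFH}$ would appear in the piece we decomposed from, contradicting rank one). Thus the hierarchy consists entirely of product decompositions, which characterizes $(M,\gamma)$ as already a product sutured manifold.

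The principal obstacle throughout is the construction and analysis of Heegaard diagrams adapted simultaneously to a balanced sutured manifold and to a chosen decomposing surface, so that the decomposition formula holds with the correct $\mathrm{Spin}^c$-bookkeeping; the rest of the argument is a hierarchy induction once this technical ingredient is in hand.
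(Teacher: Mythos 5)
First, a point of comparison: the paper does not prove this statement at all --- Theorem \ref{th:sut} is imported verbatim from Juh\'asz \cite{JuhSD} (with \cite{JuhHD}), so there is no internal proof to measure you against. Your outline does track the strategy of the cited source: establish the surface decomposition theorem identifying $SFH$ of the decomposed manifold with the direct summand of $SFH(M,\gamma)$ supported on the outer relative $\Spinc$ structures, compute $SFH\cong\mathbb{Z}$ for product sutured manifolds from a product diagram, and run Gabai's taut hierarchies with well-groomed decomposing surfaces. For ``taut $\Rightarrow SFH\neq 0$'' and for ``$SFH\cong\mathbb{Z}\Rightarrow$ product'' this is essentially Juh\'asz's argument, though in the latter the mechanism you gesture at needs to be made precise: one shows that a taut, irreducible, non-product sutured manifold admits a decomposing surface $S$ such that decomposing along both $S$ and $-S$ yields taut (hence $SFH\neq 0$) pieces sitting in distinct $\Spinc$ summands, forcing rank at least $2$.

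There is, however, a genuine logical gap in your argument for ``$SFH(M,\gamma)\neq 0\Rightarrow$ taut''. You propose to decompose a non-taut irreducible $(M,\gamma)$ along a compressing disk or a similar surface, check that the resulting piece has vanishing $SFH$, and then ``the summand formula forces $SFH(M,\gamma)=0$.'' This runs the implication backwards: the decomposition theorem exhibits $SFH(M',\gamma')$ only as a \emph{direct summand} of $SFH(M,\gamma)$, and the vanishing of one summand says nothing about the rest of the group. To make this work you would need the particular decomposition to induce an isomorphism on all of $SFH$ (true for product disk and product annulus decompositions, but a compressing disk for $R(\gamma)$ is not such a surface), or you must argue differently --- in \cite{JuhHD} the vanishing for non-taut irreducible sutured manifolds is obtained by constraining the support of $SFH$ in relative $\Spinc$ structures by the Thurston norm together with the symmetry of that support, not by pushing a zero summand upward. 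Relatedly, non-tautness is not only the presence of a compressing disk: $R(\gamma)$ may be incompressible yet fail to be norm-minimizing in $H_2(M,\gamma)$, a case your sketch does not address at all. The remaining steps are acceptable as a plan but defer the entire technical content (adapted diagrams, the $\Spinc$ bookkeeping, existence of nice surfaces in hierarchies) to the very results being cited.
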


The knot $K$ is rationally null-homologous and primitive in $Y$. Hence there is a surface $F$ properly embedded in $M =Y \setminus N(K)$, the exterior of $K$, whose boundary is the rational longitude of $K$. Without loss of generality we assume $F$ has minimal genus, $g$ say, among all such surfaces. One can cut open $M$ along $F$ and construct a sutured manifold $Y(F)$ whose suture is a parallel copy of $\partial F$. (See \cite{Gab} for the original definitions of sutured manifolds and sutured manifold decompositions.) Our knot $K$ will be fibred if and only if $Y(F)$ is a product sutured manifold.
One can compute $SFH(Y(F))$ in terms of the knot Floer homology of $K$ via the surface decomposition theorem of Juhasz:

\begin{theorem}\label{thm:SFHdecomp}{\rm (\cite[Theorem 1.3]{JuhSD})} Let $(M,\gamma)$ be a strongly balanced sutured manifold and $F$ a decomposing surface, and denote the manifold resulting from the decomposition by $(M(F),\gamma(F)).$ Then 
  \begin{equation*}
    SFH(M(F),\gamma(F))\cong \bigoplus_{\s\in Out(F)} SFH(M,\s)
  \end{equation*}
  where $Out(F)$ are the \emph{outer} $\Spinc$ structures on $(M,\gamma)$ with respect to $F,$ i.e. the homology classes of vector fields
in which one can find a representative which is never a negative multiple of the normal to $F$ with respect to some Riemannian metric on $M$
\cite[Definition 1.1]{JuhSD}.
\end{theorem}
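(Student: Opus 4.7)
The strategy is to lift the surface decomposition to a controlled modification of a sutured Heegaard diagram for $(M,\gamma)$, then read off both sides of the isomorphism from chain-level data. The classical analogue to keep in mind is that a surface decomposition along a ``nice'' surface corresponds to a simple cut-and-paste operation on any adapted combinatorial presentation of $(M,\gamma)$, and the aim is to make this also true on the level of the Floer chain complex.

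\emph{Step 1: Find a sutured Heegaard diagram adapted to $F$.} I would first prove that, after an isotopy of $F$ in $M$ rel $\partial M$, there exists a sutured Heegaard surface $\Sigma$ for $(M,\gamma)$ such that $C := F \cap \Sigma$ is a collection of properly embedded arcs and circles in general position with respect to the $\boldsymbol{\alpha}$- and $\boldsymbol{\beta}$-curves, with $C$ cutting $\Sigma$ into two families of regions, the ``outer'' and ``inner'' ones with respect to $F$. One then verifies that cutting $(\Sigma,\boldsymbol{\alpha},\boldsymbol{\beta})$ along $C$ and completing the boundary with new sutures yields a balanced sutured Heegaard diagram for $(M(F),\gamma(F))$. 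This existence result is the main topological ingredient; it is proved by taking an arbitrary sutured Heegaard splitting and putting it in standard position with respect to $F$ by compressions, isotopies, and handle slides, in the spirit of Gabai's hierarchy arguments.

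\emph{Step 2: Identify outer generators.} Using Turaev's combinatorial description of relative $\Spinc$ structures via vector fields built from the Heegaard diagram, I would show that the relative $\Spinc$ structure $\s_{w,z}(\x)$ of a generator $\x\in\mathbb{T}_{\boldsymbol{\alpha}}\cap\mathbb{T}_{\boldsymbol{\beta}}$ lies in $Out(F)$ if and only if every coordinate of $\x$ lies in an outer region of $\Sigma\setminus C$. This yields a bijection between outer generators in the original diagram and generators of the cut-open diagram, hence between $\bigoplus_{\s\in Out(F)} CF(M,\gamma;\s)$ and $CF(M(F),\gamma(F))$ on the level of abelian groups.

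\emph{Step 3: Match differentials.} By positivity of intersection in $Sym^g(\Sigma)$, a pseudoholomorphic disk connecting two outer generators must have its domain supported on the outer side of $C$, since otherwise it would contribute a negative multiplicity at some region adjacent to $C$. Thus such a disk descends to a pseudoholomorphic disk in the cut-open diagram, and conversely every disk in the cut-open diagram lifts. This identifies the sutured Floer differential on the outer subcomplex with the full differential of $CF(M(F),\gamma(F))$, giving the claimed isomorphism.

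\emph{Main obstacle.} The hardest ingredient is Step~1: producing a sutured Heegaard diagram truly adapted to an arbitrary decomposing surface $F$, especially controlling the behaviour of $\partial F$ along $\gamma$ and ensuring that the regions of $\Sigma\setminus C$ admit a consistent outer/inner colouring. The second delicate point is the transversality input for Step~3: one must choose a path of almost complex structures on $Sym^g(\Sigma)$ so that disks between outer generators are genuinely confined to the outer region, which requires compatibility of the complex structure with the decomposition along $C$. Together, these two technical steps carry essentially all the difficulty of the theorem.
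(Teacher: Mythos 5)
This statement is not proved in the paper at all: it is Juh\'asz's surface decomposition theorem, quoted verbatim from \cite[Theorem 1.3]{JuhSD} and used as a black box in the proof of Theorem \ref{thm:fibred}. So there is no internal proof to compare your proposal against. Measured instead against the actual proof in the cited reference, your outline reproduces its strategy faithfully: Juh\'asz does (i) isotope the decomposing surface into a standard position and build a balanced diagram adapted to it, (ii) show that a generator $\x$ has $\s_{w,z}(\x)\in Out(F)$ exactly when all of its coordinates avoid the ``inner'' subsurface, so that the outer generators are literally the generators of the cut-open diagram, and (iii) use positivity of domains to show that any holomorphic disk joining two outer generators has domain disjoint from the inner region, so the outer generators span a subcomplex isomorphic to $SFH(M(F),\gamma(F))$. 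Your identification of Step 1 as the main technical burden is also accurate.

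Two corrections. First, in Step 3 no special choice of almost complex structure is required: once the diagram is adapted, the confinement of domains to the outer side is a purely combinatorial consequence of nonnegativity of all local multiplicities together with the position of the corners of the domain at outer intersection points; transversality is achieved generically as usual. Second, and more substantively, the theorem as stated (and as you propose to prove it) is missing a hypothesis that is present in \cite[Theorem 1.3]{JuhSD} and that your Step 1 cannot manufacture for free: the decomposing surface must be \emph{nice} in Juh\'asz's sense --- roughly, $F$ is open and the closed components of $\partial F$ lying in $R(\gamma)$ form parallel, coherently oriented, boundary-coherent families. Without some such condition the standard-position argument breaks down and the isomorphism can fail. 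The paper suppresses this hypothesis because the surface it applies the theorem to (a genus-minimizing surface bounded by the rational longitude) satisfies it automatically, but a self-contained proof along your lines must either impose it or explain where it is used; as written, your ``isotopy of $F$ rel $\partial M$'' in Step 1 quietly assumes it.
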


The strongly balanced hypothesis is a technical condition trivially satisfied in our case. 
The condition $\s\in Out(F)$ can be rephrased in terms of the Chern class of $\s$ evaluated on $F:$ $\s\in Out(F) \iff$ $<c_1(\s,t_0),[F]>=c(F,t_0),$ where $c(F,t_0)$ is a combinatorial quantity which in our case turns out to be: $c(F,t_0)=1-2g-p$ \cite[Section 3]{JuhSD}. See below for an explication of the terms in the formula:

The Chern class of a relative $\Spinc$ structure $\s$ is defined in the following way. 
Take a representative $v$ of $\s$ (i.e. a nowhere vanishing vector field on $M$ with predetermined behavior on $\partial M$ with respect to the sutures \cite[\S 3]{JuhSD}. Put a Riemannian metric on $M$ and consider 
the orientable 2-plane field $v^\perp$. Consider a trivialisation of $v^\perp_{\partial M}$ which exists because of the strongly balanced hypothesis. Then the Chern class of $\s$ relative to this trivialisation is the obstruction to 
extending the trivialisation to all of $M.$ (See \cite{JuhSD} for details in the sutured case and \cite{OzSzAp} for the knot complement case.) There is a natural trivialisation $t_0$ on $\partial M$ to consider, namely the section consisting of vectors parallel to the meridian of $K$. 

Since $H_1(M)$ contains no 2-torsion in our situation, the relative $\Spinc$ structures on $M$ are identified by their Chern class (see \cite{Gom} for the 
closed case; the relative case can be deduced by filling and applying the closed case result), which in 
turn are identified by the evaluation on the homology class $[F]$.
Hence $Out(F)$ consists precisely of one $\Spinc$ structure $\mathfrak{\xi_0}\in\RSpinc(Y,K)$ (see the next paragraph for the exact identification of $\mathfrak{\xi_0}$). Therefore, by Theorem \ref{thm:SFHdecomp}, $SFH(Y(F))\cong \HFK(Y,K,\mathfrak{\xi_0})$ which is 
$0$ or $\mathbb{Z}$ by Lemma \ref{0 or z}. As $K$ is primitive and $Y$ prime, $M$ is irreducible, and as $F$ is genus minimising, $Y(F)$ is taut. Thus $SFH(Y(F))\not\cong 0$ by Theorem \ref{th:sut} and so must be isomorphic to $\zz$. Hence $K$ is fibred.

In fact, as for knots in $S^3$, one can identify $SFH(Y(F))$ with the top summand with respect to the Alexander grading $\HFK(Y,K,g+(p-1)/2)$ (\cite[Section 3.7]{Ras}): In \cite{Ras} the Alexander grading $A$
on relative $\Spinc$ structures is defined such that the Euler characteristic of the Floer homology is symmetric with respect to the origin. The same grading (after the identification $H^2(Y,K)\cong \zz$ given by declaring $[F]$ to be the positive generator) is defined in
\cite[Section 4.4]{NiTN} in terms of the Chern class of the $\Spinc$ structures. By Juhasz's decomposition formula \cite[Lemma 3.10 and Theorem 1.3]{JuhSD}, we get $<c_1(\xi_0,t_0),[F]>=1-2g-p,$ hence $A(\xi_0)=(1-2g-p)/2$
and by conjugation invariance \cite[Section 4.4 Equation 2]{NiTN}, $SFH(Y(F))\cong \HFK(Y,K,g+(p-1)/2).$ 
\end{proof}

\subsection{Proof of Theorem \ref{thm: Ni analogue}}  

We prove Theorem \ref{thm: Ni analogue} here; it is an analogue of Ni's fibring theorem \cite{NiF} in an orbifold setting. Recall that we have assumed that $K$ is a knot in an orbi-lens-space $\mathcal{L}$ which is primitive in $\mathcal{L}$ and which admits a non-trivial orbi-lens space surgery.

\begin{proof}[Proof of Theorem \ref{thm: Ni analogue}]
When $\mathcal{L}$ is a manifold, this is just Theorem \ref{thm:fibred}. Suppose then that $\mathcal{L}$ has a non-empty singular set, say $\mathcal{L} = \mathcal{L}(p,q; a, b)$. Set $L_0 = \mathcal{L}(p,q; a, b) \setminus N(\Sigma(\mathcal{L}(p,q; a, b)))$ and note that as in the proof of Lemma \ref{lem:intersection}, 
$$L_0 \cong \left\{ 
\begin{array}{ll}
S^1 \times D^2 &  \hbox{ if } \vert \Sigma(\mathcal{L}(p,q; a, b)) \vert = 1 \\
S^1 \times S^1 \times [0,1] & \hbox{ if } \vert \Sigma(\mathcal{L}(p,q; a, b)) \vert = 2
\end{array} \right. 
$$
Since $K$ admits a non-trivial orbi-lens space surgery in $\mathcal{L}$, $L_0$ admits a non-trivial cosmetic surgery (c.f. the proof of Lemma \ref{lem:intersection}). Lemma \ref{Berge-Gabai} then shows that $L_0 \cong S^1 \times D^2$ (so we can suppose that $b = 1$) and $K$ is a Berge-Gabai knot in $L_0$ (Definition \ref{def:bg}). Let $n$ be the winding number of $K$ in $L_0$. Our hypotheses imply that $\gcd(p, n) = 1$. Thus Theorem \ref{th_fibred} implies that there is a locally trivial fibring of the exterior of $K$ in $L_0$ by surfaces which intersect $\partial L_0$ in curves parallel to the meridian slope of the solid torus $\overline{N(\Sigma(\mathcal{L}(p,q; a)))}$. Therefore we can extend the fibration over the exterior of $K$ in $\mathcal{L}(p,q; a)$ in such a way that it is everywhere transverse to $\Sigma(\mathcal{L}(p,q;a))$. We endow each fibre $F$ of this surface fibration with the structure of a $2$-orbifold by  declaring each point of $F \cap \Sigma(\mathcal{L}(p,q;a))$ to be a cone point of order $a$. In this way the exterior of $K$ in $\mathcal{L}(p,q; a)$ admits an orbifold fibring with base the circle.     
\end{proof} 

\subsection{Proof of Theorem \ref{th:properties}(1) and (2)} 
Let $K$ be a hyperbolic knot without hidden symmetries such that $|\mathcal{CC}(K)| > 1$. If $K$ is periodic, it is an unwrapped Berge-Gabai knot  (Proposition \ref{prop:berge-gabai}) and so Corollary \ref{cor:ubg periodic} implies that it is fibred. If $K$ is not periodic, then $Z(K)$ acts freely on $S^3$. Proposition \ref{prop:commensurable} shows that the image $\bar K$ of $K$ in the lens space $\mathcal{L}_K$ admits a non-trivial lens space surgery. Since $\bar K$ is primitive in $\mathcal{L}_K$, Theorem \ref{thm:fibred} shows that $\bar K$, and therefore $K$, is fibred. Thus Theorem \ref{th:properties}(1) holds. Part (2) of that theorem is an immediate consequence of the fibration result (1) and the fact that the knots are cyclically commensurable.
\qed

\section{Orientation reversing symmetries} \label{amphi}

In this section we prove assertion $(4)$ of Theorem \ref {th:properties}.

\begin{proposition} \label{prop:chiral}
Let $K$ be an amphichiral hyperbolic knot. Then $|\mathcal{CC}(K)| = 1$.  Moreover, if $K$ has no hidden symmetry, then  $|\mathcal{C}(K)| = 1$.
\end{proposition}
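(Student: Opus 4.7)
My plan is to argue by contradiction: assume $K$ is amphichiral with $|\mathcal{CC}(K)| \geq 2$, and exploit the orientation-reversing symmetry together with Proposition \ref{prop:volume}(1) to force an impossibility on the slopes in the cusp of $\mathcal{Z}_K$.

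First I would realize the amphichirality as an orientation-reversing isometry $\phi$ of $S^3 \setminus K$ via Mostow--Prasad rigidity. Since $Z(K)$ is the unique cyclic subgroup of $\Isom^+(S^3 \setminus K)$ of its order acting freely on $K$, it is characteristic and therefore normalized by $\phi$, so $\phi$ descends to an orientation-reversing self-homeomorphism $\bar\phi$ of $\mathcal{Z}_K$ fixing the meridional slope $r(K)$ (because $\phi$ preserves the meridian of $K$ up to sign). Now let $K' \in \mathcal{CC}(K) \setminus \{K\}$. The knot $\phi(K')$ is orientation-preservingly equivalent to the mirror image $(K')^*$ and is cyclically commensurable with $\phi(K) = K$, so $(K')^* \in \mathcal{CC}(K)$. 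Since $(K')^*$ and $K'$ share the same volume, Proposition \ref{prop:volume}(1) forces $(K')^* = K'$, whence $K'$ is amphichiral; by Theorem \ref{th:slopes}(2) and the slope-to-knot correspondence, $\bar\phi$ must then fix the slope $r_{K'}$ in $\mathcal{Z}_K$, and the same reasoning applies to every element of $\mathcal{CC}(K)$. Since $\bar\phi^2$ is orientation-preserving and fixes $r(K)$, Lemma \ref{lem:slopes well-defined} gives $\bar\phi_*^2 = \pm I$ on $H_1(T)$; the case $\bar\phi_*^2 = -I$ is ruled out on eigenvalue grounds for $\det\bar\phi_* = -1$, so $\bar\phi_*$ is an involution whose $\pm 1$-eigenspaces each intersect the primitive lattice of $\mathbb{Z}^2$ in a single primitive class up to sign. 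Hence at most two slopes can be fixed, yielding $|\mathcal{CC}(K)| \leq 2$.

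The main obstacle is to exclude the residual case $|\mathcal{CC}(K)| = 2$. In that case $r(K)$ and $r_{K'}$ form a $\mathbb{Z}$-basis of $H_1(T)$ with $\Delta = 1$, and in this basis $\bar\phi_* = \mathrm{diag}(\pm 1,\mp 1)$. I would exploit that $\bar\phi$ extends to orientation-reversing self-homeomorphisms of both fillings $\mathcal{L}_K$ and $\mathcal{L}_{K'}$, each preserving the corresponding surgery core, so $(\mathcal{L}_K,\bar K)$ and $(\mathcal{L}_{K'},\bar K')$ are both amphichiral pairs. In the non-periodic case (Proposition \ref{prop:commensurable}), the amphichirality constraints on the lens spaces $L(c,d)$ and $L(c',d')$, combined with $\gcd(c,c') = 1$ and the relation $|cd' - dc'| = 1$ extracted in the proof of Proposition \ref{prop:volume}, should yield a contradiction after analysing how the involution acts on the cores of the genus-one Heegaard splittings; in the periodic case, Theorem \ref{th:periodic} realises $K$ as an unwrapped Berge--Gabai knot and the chirality intrinsic to a $1$-bridge braid (visible in its twist parameter) precludes amphichirality. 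The second assertion then follows at once from Proposition \ref{prop:same}, which identifies $\mathcal{C}(K)$ with $\mathcal{CC}(K)$ in the absence of hidden symmetries.
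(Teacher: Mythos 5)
Your opening moves match the paper's: descend the orientation-reversing isometry to an orientation-reversing self-map $\bar\phi$ of $\mathcal{Z}_K$ and study which slopes it fixes. But the heart of the proposition is precisely the step you label ``the main obstacle,'' and your treatment of it is a genuine gap. You reduce to $|\mathcal{CC}(K)| \leq 2$ and then propose to exclude $|\mathcal{CC}(K)| = 2$ by analysing amphichiral pairs $(\mathcal{L}_K,\bar K)$, $(\mathcal{L}_{K'},\bar K')$ of lens spaces in the non-periodic case and by an unsubstantiated claim that ``the chirality intrinsic to a $1$-bridge braid'' precludes amphichirality in the periodic case. Neither analysis is carried out, and neither is obviously completable as stated; as written the proof does not close.

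The missing observation is much simpler and is already implicit in your own setup. Since any self-homeomorphism of $S^3 \setminus K$ sends the longitude $\lambda_K$ to $\pm\lambda_K$ and the meridian to $\pm\mu_K$, and $\bar\phi$ is orientation-reversing, in the projected basis $\bar\mu, \bar\lambda$ of the cusp of $\mathcal{Z}_K$ one has $\bar\phi_*(\bar\mu) = -\epsilon\bar\mu$ and $\bar\phi_*(\bar\lambda) = \epsilon\bar\lambda$. So the two eigendirections you count are not ``$r(K)$ and $r_{K'}$'' but exactly $\bar\mu$ and $\bar\lambda$ --- and $\bar\lambda$ is the rational longitude of $|\mathcal{Z}_K|$, whose filling has positive first Betti number and therefore cannot be an orbi-lens space. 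That kills the residual case immediately. (The paper in fact bypasses the amphichirality-propagation argument you use to show each $r_{K'}$ is fixed: for any slope $\alpha = p\bar\mu + q\bar\lambda$ one computes $\Delta(\alpha, \bar\phi_*(\alpha)) = 2|pq|$, which is even, while any two orbi-lens-space filling slopes are at distance at most $1$ by the cyclic surgery argument of Lemma \ref{lem:intersection}; hence every such slope is $\bar\phi$-fixed, and the only candidates are $\bar\mu$ and $\bar\lambda$.) Your appeal to Proposition \ref{prop:volume} to show each $K'$ is amphichiral is a workable alternative for that intermediate step, but without the identification of the second fixed direction as the rational longitude the argument remains incomplete.
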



\begin{proof}
Let $K$ be an amphichiral knot with $S^3 \setminus K \cong {\bf H}^3/\Gamma_K$. Fix an orientation-reversing isometry $\theta: S^3 \setminus K \to S^3 \setminus K$ and lift it to $\tilde \theta \in \Isom({\bf H}^3)$.  Let $N(\Gamma_K)$ be the  normalizer of $\Gamma_K$ in $\Isom({\bf H}^3)$.

  
Then $\tilde \theta \in N(\Gamma_K)$ and  normalizes $N^+(\Gamma_K)$.  The action of $\tilde \theta$ permutes the index $2$ subgroups of $N^+(\Gamma_K)$ and so it leaves invariant the unique such subgroup with a torus cusp (c.f. Lemma \ref{onetorus}). Call this subgroup $\Gamma_Z$ and recall that ${\bf H}^3 /\Gamma_Z \cong  (S^3 \setminus K) / Z(K) \cong \mathcal{Z}_{K}$. Thus $\tilde  \theta$ induces an orientation-reversing isometry $\bar \theta: \mathcal{Z}_K \to \mathcal{Z}_K$ which lifts to $\theta$.

Let $\mu_K, \lambda_K$ be a meridian, longitude basis of the first homology of the cusp of $S^3 \setminus K$. It is clear that $\theta_*(\lambda_K) = \pm \lambda_K$ while $\theta_*(\mu_K) = \pm \mu_K$ by \cite{GL}. 
Projecting to $\mathcal{Z}_K$, we see that $\mu_K \mapsto \bar \mu$ and $\lambda_K \mapsto |Z(K)| \bar \lambda$ where $\bar \mu, \bar \lambda$ is a basis of the first homology of the cusp of $\mathcal{Z}_{K}$. Since $\bar \theta$ is orientation-reversing and lifts to $\theta$, there is an $\epsilon \in \{\pm 1\}$ such that 

$$\bar \theta_*(\bar \lambda) = \epsilon \bar \lambda \ \ \text{and} \ \ \bar \theta_*(\bar \mu) = - \epsilon \bar \mu.$$

\noindent It follows that, given any slope $\alpha = p \bar \mu + q \bar \lambda$ in the cusp of $\mathcal{Z}_K$, 
$$\Delta(\alpha, \bar \theta_*(\alpha)) = 2|pq| \equiv 0 \hbox{ (mod $2$)}.$$
Since the set of slopes in the cusp of $\mathcal{Z}_K$ whose associated fillings yield orbi-lens spaces is invariant under $\bar \theta$ and the distance between any two such slopes is at most $1$ (c.f. the proof of Lemma \ref{lem:intersection}), each such slope must be invariant under $\bar \theta$. But from the distance calculation immediately above, the only slopes invariant under $\bar \theta$ are those associated to $\bar \mu$ and $\bar \lambda$. The latter is the rational longitude of $|\mathcal{Z}_K|$ and so its associated filling cannot be an orbi-lens space. Thus the only slope which can yield an orbi-lens space is $r_K$, the slope associated to $\bar \mu$. Lemma \ref{lem:slope} then shows that there is exactly one knot complement in the cyclic commensurability class of $S^3 \setminus K$. 
\end{proof}

Proposition \ref{prop:chiral} together with Proposition \ref{prop:volume} directly implies part $(4)$ of Theorem \ref{th:properties}.

\begin{theorem} Let $S^3 \setminus K$ be a chiral knot complement without hidden symmetries.  Then $S^3 \setminus K$ is not commensurable with an orbifold  which admits an orientation-reserving involution.  That is, a knot complement without hidden symmetries in its orientable commensurator does not have hidden symmetries in its full commensurator. 
\end{theorem}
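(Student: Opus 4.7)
The plan is to prove the contrapositive: assume that $S^3 \setminus K$ is commensurable with an oriented orbifold $\mathcal{O}$ that carries an orientation-reversing involution $\sigma$, and deduce that $K$ must be amphichiral. First I would lift $\sigma$ to an element $\tilde\sigma \in \Isom({\bf H}^3)$; this element is orientation-reversing and, since $\sigma$ normalizes the Kleinian group of $\mathcal{O}$, it commensurates any group in the commensurability class, in particular $\Gamma_K$. The conjugate subgroup $\Gamma' := \tilde\sigma\, \Gamma_K\, \tilde\sigma^{-1}$ then lies inside $\PSL(2,\mathbb{C})$, is commensurable with $\Gamma_K$, and the map induced by $\tilde\sigma$ is an orientation-reversing isometry ${\bf H}^3/\Gamma_K \to {\bf H}^3/\Gamma'$. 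Since the orientation-reversed complement of $K$ is orientation-preservingly isometric to $S^3 \setminus \bar K$, one identifies ${\bf H}^3/\Gamma' \cong S^3 \setminus \bar K$ as oriented hyperbolic manifolds, so $\bar K \in \mathcal{C}(K)$.

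Next I would upgrade this to cyclic commensurability. Having (orientable) hidden symmetries is a commensurability invariant, as it is detected by the cusp type of $\mathcal{O}_{\min}$; so $\bar K$ also has no hidden symmetries. Proposition \ref{prop:same} then gives $\mathcal{C}(K)=\mathcal{CC}(K)$, hence $\bar K \in \mathcal{CC}(K)$.

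To finish I would split on the size of $\mathcal{CC}(K)$. If $|\mathcal{CC}(K)|=1$ then $\bar K = K$ and $K$ is amphichiral. If $|\mathcal{CC}(K)|\geq 2$ then Proposition \ref{prop:volume}(3) applies directly (using that $K$ is commensurable, hence cyclically commensurable, with its mirror image) and again $K$ is amphichiral. Either outcome contradicts the chirality hypothesis, completing the proof. The only step requiring care is the identification of ${\bf H}^3/\Gamma'$ with $S^3 \setminus \bar K$ as oriented manifolds; the rest is a straightforward assembly of results already proved in the paper.
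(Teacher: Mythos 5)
Your proposal is correct and follows essentially the same route as the paper: lift the orientation-reversing involution via Mostow--Prasad rigidity to an element of the full commensurator of $\Gamma_K$, conclude that $S^3\setminus K$ is (orientation-preservingly) commensurable with its mirror image, and then invoke Proposition \ref{prop:volume} to contradict chirality. Your added care in identifying ${\bf H}^3/\tilde\sigma\Gamma_K\tilde\sigma^{-1}$ with $S^3\setminus\bar K$, passing to cyclic commensurability, and splitting on $|\mathcal{CC}(K)|$ only makes explicit steps the paper leaves implicit.
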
 
\begin{proof} 

Suppose that $S^3 \setminus K$ is commensurable with an orbifold $\mathcal{O}$ which admits an orientation-reversing involution. Let $\Gamma_K$ and $\Gamma_\mathcal{O}$ be discrete subgroups of $\PSL(2,\mathbb{C})$ such that  ${\bf H}^3 / \Gamma_K \cong S^3 \setminus K$ and ${\bf H}^3 / \Gamma_{\mathcal{O}} \cong \mathcal{O}$.  We furthermore suppose that $\Gamma_\mathcal{O}$ and $\Gamma_K$ intersect in a finite-index subgroup, by conjugating if necessary.  By Mostow-Prasad rigidity, the  involution  of $\mathcal{O}$ corresponds to an element $g \in \Isom({\bf H}^3)$ which conjugates the fundamental group of $\mathcal{O}$ in $\PSL(2, \mathbb{C})$ to itself.  That is $g \Gamma_{\mathcal{O}} g^{-1} = \Gamma_{\mathcal{O}}$.  Thus $g$ is contained in the full commensurator of $\Gamma_\mathcal{O}$, which is the same as the full commensurator of $\Gamma_K$. This implies that $\Gamma_K$ is commensurable with $g\Gamma_kg^{-1}$, or that $S^3 \setminus K$ is commensurable with its image under an orientation-reversing involution.  But this  knot complement has the same volume, which contradicts Proposition \ref{prop:volume}. 

\end{proof}

\medskip\noindent

\vspace{.5cm} 
{\scriptsize

\noindent Michel Boileau, Institut de Math\'ematiques de Toulouse UMR 5219, Universit\'e Paul Sabatier, 31062 Toulouse Cedex 9,
et Institut Universitaire de France 103 bd Saint-Michel 75005 Paris, France 
\newline\noindent
e-mail: boileau@picard.ups-tlse.fr \\

\noindent
Steven Boyer, D\'ept. de math., UQAM, P. O. Box 8888, Centre-ville, Montr\'eal, Qc, H3C 3P8, Canada
\newline\noindent
e-mail: boyer@math.uqam.ca \\ 

\noindent
Radu Cebanu, D\'ept. de math., UQAM, P. O. Box 8888, Centre-ville, Montr\'eal, Qc, H3C 3P8, Canada
\newline\noindent
e-mail: radu.cebanu@gmail.com \\ 

\noindent
Genevieve S. Walsh, Dept. of Math., Tufts University, Medford, MA 02155, USA
\newline\noindent
e-mail: genevieve.walsh@gmail.com

}

\end{document}